\title{Period integrals of hypersurfaces via tropical geometry}
\author{Yuto Yamamoto}
\address{
Center for Geometry and Physics, Institute for Basic Science (IBS), Pohang 37673, Korea.}
\email{yuto.yamamoto@riken.jp}
\date{}
\begin{document}

\begin{abstract}
Let $\lc Z_t \rc_t$ be a one-parameter family of complex hypersurfaces of dimension $d \geq 1$ in a toric variety.
We compute asymptotics of period integrals for $\lc Z_t \rc_t$ by applying the method of Abouzaid--Ganatra--Iritani--Sheridan, which uses tropical geometry.
As integrands, we consider Poincar\'{e} residues of meromorphic $(d+1)$-forms on the ambient toric variety, which have poles along the hypersurface $Z_t$.
The cycles over which we integrate them are spheres and tori which correspond to tropical $(0, d)$-cycles and $(d, 0)$-cycles on the tropicalization of $\lc Z_t \rc_t$ respectively.
In the case of $d=1$, we explicitly write down the polarized logarithmic Hodge structure of Kato--Usui at the limit as a corollary.
Throughout this article, we impose the assumption that the tropicalization is dual to a unimodular triangulation of the Newton polytope.
\end{abstract}

\maketitle

\section{Introduction}\label{sc:intro}

Abouzaid--Ganatra--Iritani--Sheridan \cite{MR4194298} computed asymptotics of integrations of holomorphic volume forms on toric Calabi--Yau hypersurfaces over Lagrangian sections of SYZ fibrations by using tropical geometry.
They gave a new proof of the gamma conjecture (an asymptotic version of Hosono's conjecture \cite[Conjecture 2.2]{MR2282969}) for ambient line bundles on Batyrev \cite{MR1269718} pairs of mirror Calabi--Yau hypersurfaces.
Their work also gave us a new perspective from Strominger--Yau--Zaslow conjecture \cite{MR1429831} and tropical geometry to the gamma conjecture.
In particular, they observed that the Riemann zeta values (the effect of gamma classes) appearing in the subleading terms of periods are contributions from the discriminants of SYZ fibrations.
The goal of this article is to generalize their work to integrations of Poincar\'{e} residues of meromorphic forms on the ambient toric varieties, which have poles along the hypersurfaces, for hypersurfaces which are not necessarily Calabi--Yau hypersurfaces.
The periods of the residues of forms with a higher-order pole are also necessary to describe the Hodge structure of the hypersurfaces.

\subsection{Main result}\label{sc:main}

Let $K$ be the convergent Puiseux series field over $\bC$, i.e., the field of formal series $\sum_{j \in \bZ}c_j x^{j/n}$ $\lb c_j \in \bC, n \in \bZ_{\geq 1} \rb$ that have only finitely many coefficients with negative index and whose positive part is convergent in a neighborhood of $x=0$. 
It has the standard non-archimedean valuation 
\begin{align}\label{eq:val}
	\mathrm{val} \colon K \longrightarrow \bQ \cup \{ \infty \},\quad k=\sum_{j \in \bZ}c_j x^{j/n} \mapsto \min \lc j/n \in \bQ \relmid c_j \ne 0 \rc.
\end{align}
Let $d$ be a positive integer.
Consider a free $\bZ$-module $N$ of rank $d+1$, and its dual $M:=\Hom (N, \bZ)$.
We write $N_Q:=N \otimes_\bZ Q$, $M_Q:=M \otimes_\bZ Q$ for a $\bZ$-module $Q$.
Let $\Delta \subset M_\bR$ be a lattice polytope of dimension $d+1$ such that $W:=\Int (\Delta) \cap M \neq \emptyset$, where $\Int (\Delta)$ denotes the interior of $\Delta$.
We consider a Laurent polynomial $f=\sum_{m \in A} k_m z^m \in K \ld M \rd$ $(k_m \neq 0, \forall m \in A:= \Delta \cap M)$ over $K$ such that the function 
\begin{align}\label{eq:lambda}
\lambda \colon A \to \bQ, \quad m \mapsto \mathrm{val}(k_m)
\end{align}
extends to a strictly-convex piecewise affine function on a unimodular triangulation $\scrT$ of $\Delta$, i.e., a triangulation consisting only of $(d+1)$-dimensional simplices of the minimal volume $1/\lb d+1\rb!$ and their faces. 
(In this article, we say that a function $h \colon \Delta \to \bR$ is \emph{convex} if it satisfies $h \lb t m_1+(1-t)m_2 \rb \leq t h(m_1)+(1-t)h(m_2)$ for any $t \in \ld 0, 1\rd$ and $m_1, m_2 \in \Delta$.)

Let $t \in \bR_{>0}$ be an element in a small neighborhood of $0$, where all coefficients $k_m$ of the Laurent polynomial $f$ converge.
We consider the Laurent polynomial $f_t \in \bC \ld M \rd$ over $\bC$ obtained by substituting $t$ for the indeterminate $x$ in the coefficients of $f$.
We set
\begin{align}
\mathring{Z}_t:=\lc z \in N_{\bC^\ast} \relmid f_t(z)=0 \rc
\end{align}
and let $Z_t$ be the closure of $\mathring{Z}_t$ in the toric variety $Y_\Sigma \supset N_{\bC^\ast}$ associated with a rational simplicial fan $\Sigma$ in $N_\bR$ that is a refinement of the normal fan of $\Delta$.

Let further $l \geq 1$ be an integer.
We consider the polytope $l \cdot \Delta:=\lc l \cdot m \in M_\bR \relmid m \in \Delta \rc$, and its triangulation $l \cdot \scrT:=\lc l \cdot \tau \relmid \tau \in \scrT \rc$.
We set $V_l:=M \cap \Int \lb l \cdot \Delta \rb$, and take an element $v \in V_l$.
Let $\tau_v \in \scrT$ be the minimal cell such that $v \in l \cdot \tau_v$.
By the unimodularity assumption on $\scrT$, one can uniquely write $v= \sum_{m \in A \cap \tau_v} p_m \cdot m$ with $p_m \in \bZ \cap \lb 0, l \rd$ such that $\sum_{m \in A \cap \tau_v} p_m =l$.
We consider
\begin{align}\label{eq:omega}
\omega_t^{l, v}:=
\lb \bigwedge_{i=0}^{d} \frac{dz_i}{z_i} \rb
\frac{1}{\lb f_t \rb^l}
\prod_{m \in A \cap \tau_v} \lb k_{m, t} \cdot z^m \rb^{p_m}, 
\end{align}
where $\lb z_0, \cdots, z_{d} \rb$ are $\bC^\ast$-coordinates on $N_{\bC^\ast} \cong \lb \bC^\ast \rb^{d+1}$, and $k_{m, t} \in \bC^\ast$ is the number obtained by substituting $t$ to the indeterminate $x$ in $k_m$.
This extends to a meromorphic $(d+1)$-form on $Y_\Sigma$ that has a pole along $Z_t$, and such forms generate $H^{0} \lb Y_\Sigma, \Omega^{d+1} \lb l \cdot Z_t \rb \rb$ (cf.~\pref{sc:forms}).
We write the image of $\omega_t^{l, v}$ by the Poincar\'{e} residue map 
\begin{align}\label{eq:Res}
\Res \colon H^{0} \lb Y_\Sigma, \Omega^{d+1} \lb l \cdot Z_t \rb \rb \to H^d \lb Z_t, \bC \rb
\end{align}
(cf.~e.g.~\cite[Section 8]{MR0260733}) as $\Omega_t^{l, v} \in H^d \lb Z_t, \bC \rb$.
This is the form which we consider as an integrand in this article.
Note that the cohomology group of a hypersurface decomposes into the residual part (the image of the Poincar\'{e} residue map \eqref{eq:Res}) and the ambient part which consists of cohomology classes coming from the ambient toric variety, and therefore the former part is essential when it comes to Hodge structure of hypersurfaces (cf.~\cite[Section 5]{MR2019444}).

The cycles for which we compute the periods in this article are the ones constructed by Mikhalkin \cite{MR2079993}.
We briefly recall the result.
The tropicalization $\trop \lb f \rb$ of $f$ is the piecewise affine function $\trop \lb f \rb \colon N_\bR \to \bR$ defined by
\begin{align}
\trop(f)(n):=\min_{m \in A} \lc \mathrm{val} (k_m) + \la m, n \ra \rc,
\end{align}
and the tropical hypersurface $X\lb \trop \lb f \rb \rb \subset N_\bR$ defined by $\trop \lb f \rb$ is the corner locus of $\trop \lb f \rb$.
The tropical hypersurface $X\lb \trop \lb f \rb \rb$ is a polyhedral complex of dimension $d$, and gives a polyhedral decomposition $\scrP$ of $N_\bR$, which is dual to $\scrT$ (cf.~e.g.~\cite[Proposition 2.1]{MR2079993}).
It is also the limit of the image of $\mathring{Z}_t \subset N_{\bC^\ast}$ by the map $\Log_t \colon N_{\bC^\ast} \to N_\bR$ defined by
\begin{align}\label{eq:Log}
\Log_t \lb z_0, \cdots, z_d \rb:=\lb \log_t |z_0|, \cdots, \log_t |z_d| \rb
\end{align}
as $t \to +0$ \cite{ISSN:1401-5617, MR2079993}.
The relevant part of the claim of \cite[Theorem 3]{MR2079993} for our hypersurface $\mathring{Z}_t \subset N_{\bC^\ast}$ is as follows:

\begin{theorem}{\rm(\cite[Theorem 3]{MR2079993})}\label{th:mik}
There exists a stratified torus fibration $p \colon \mathring{Z}_t \to V \lb \trop \lb f \rb \rb$ satisfying the following:
\begin{itemize}
\item The induced map $p^\ast \colon H^d \lb V \lb \trop \lb f \rb \rb, \bZ \rb \to H^d \lb \mathring{Z}_t, \bZ \rb$ is injective.
\item For each cell $\sigma \in \scrP$ of dimension $d$ contained in $X\lb \trop \lb f \rb \rb$, there exists a point $n \in \Int (\sigma)$ such that the fiber $p^{-1} \lb n \rb$ is a Lagrangian $d$-torus.
\item There exist Lagrangian embeddings $\phi_i \colon S^d \to  \mathring{Z}_t$ 
$\lb 1 \leq i \leq \rank H^d \lb V \lb \trop \lb f \rb \rb, \bZ \rb \rb$ 
such that the cycles $p \lb \phi_i \lb S^d\rb \rb$ form a basis of $H_d \lb V \lb \trop \lb f \rb \rb, \bZ \rb$.
\end{itemize}
\end{theorem}

We refer the reader to \cite[Definition 7]{MR2079993} for the definition of a stratified torus fibration.
The sphere cycles stated in the last item of \pref{th:mik} are constructed in our setup as follows:
For $w \in W$, we rewrite the equation $f=0$ as $-\lb f- k_w z^w \rb / k_w z^w=1$, and set
\begin{align}
f_t^w:=\left. -\lb f- k_w z^w \rb / k_w z^w \right|_{x=t} \in \bC \ld M \rd.
\end{align}
Then $\mathring{Z}_t=\lc z \in N_{\bC^\ast} \relmid f_t^w(z)=1 \rc$.
The hypersurface $Z_t$ is a member of the family of complex hypersurfaces defined by the second projection
\begin{align}\label{eq:ch-family}
\lc \lb z, \lc a_m \rc_{m \in A \setminus \lc w \rc} \rb \in Y_\Sigma \times \lb \bC^\ast \rb^{A \setminus \lc w \rc} \relmid \sum_{m \in A \setminus \lc w \rc} a_m z^{m-w}=1 \rc 
\to \lb \bC^\ast \rb^{A \setminus \lc w \rc}.
\end{align}
For $m \in A$, we set $\lambda_m:=\mathrm{val}(k_m) \in \mathbb{Q}$.
Let $c_m \in \bC^\ast$ denote the coefficient of $x^{\lambda_m}$ in $k_m \in K$, and we write its absolute value $|c_m|$ as $r_m \in \bR_{>0}$.
We define 
\begin{align}
\tilde{f}_t^w&:=\sum_{m \in A \setminus \lc w \rc} \frac{r_m}{r_w} t^{\lambda_m-\lambda_w} z^{m-w}\\
\tilde{Z}_t^w&:=\lc z \in Y_\Sigma \relmid \tilde{f}_t^w(z)=1 \rc.
\end{align}
The complex hypersurface $\tilde{Z}_t^w$ is the fiber of \eqref{eq:ch-family} over the point $\lc a_m=\frac{r_m}{r_w} t^{\lambda_m-\lambda_w} \rc_{m \in A \setminus \lc w  \rc} \in \lb \bC^\ast \rb^{A \setminus \lc w \rc}$, and is also a member of the family \eqref{eq:ch-family}.
The positive real locus $\tilde{Z}_t^w \cap N_{\bR_{>0}}$ is homeomorphic to a $d$-sphere (cf.~\cite[Section 6.7]{MR2079993}).
We choose a branch of the argument $\arg \lb - c_m/c_w \rb$ for every $m \in A \setminus \lc w \rc$, and transport the positive real locus $\tilde{Z}_t^w \cap N_{\bR_{>0}}$ in \eqref{eq:ch-family} by varying the complex coefficients of the polynomial defining the hypersurface from $\tilde{f}_t^w$ to $f_t^w$ so that the argument $\arg \lb r_m/r_w \rb=0$ of every coefficient in $\tilde{f}_t^w$ changes to the argument $\arg \lb -k_{m, t}/k_{w, t} \rb \sim \arg \lb - c_m/c_w \rb$ (as $t \to +0$) of the coefficient in $f_t^w$ continuously.
Let $C_t^w \subset Z_t$ denote the cycle that we obtain by this procedure.
(The cycle $C_t^w \subset Z_t$ depends on the choices of branches of $\arg \lb - c_m/c_w \rb$.)
This is one of the cycles stated in the last item of \pref{th:mik}.
We will construct the cycle $C_t^w$ concretely in \pref{sc:sphere} by the method of \cite{MR4194298}.

Let $\sigma \in \scrP$ be a cell of dimension $d$ contained in $X\lb \trop \lb f \rb \rb$ whose dual edge in $\scrT$ contains an element in $W$ as its vertex.
We take a point $n_0 \in \Int \lb \sigma \rb$.
There is a submanifold that is diffeomorphic to a $d$-torus in the inverse image of a small neighborhood of $n_0$ in $N_\bR$ by the restriction of the map $\Log_t$ to $\mathring{Z}_t \subset N_{\bC^\ast}$.
(See \pref{sc:integral2} for the detail.)
This is one of the torus cycles stated in the second item of \pref{th:mik}.
We write it as $T_t^\sigma \subset Z_t$.
We will compute the period integrals over these cycles $C_t^w, T_t^\sigma$.

For $w \in W$, we set 
\begin{align}\label{eq:Aw}
A_{w}:=\lc m \in A \setminus \lc w \rc \relmid \conv \lb \lc m, w \rc \rb \in \scrT \rc,
\end{align}
where $\conv \lb \bullet \rb$ denotes the convex hull.
Let further $Y_{w}$ be the toric variety associated with the fan $\Sigma_{w}:=\lc \bR_{\geq 0} \cdot \lb \tau-w \rb \relmid \tau \in \scrT, \tau \ni w \rc$, and $D_m^w$ $\lb m \in A_{w} \rb$ be the toric divisor on $Y_{w}$ associated with the $1$-dimensional cone $\bR_{\geq 0} \cdot \lb m-w \rb \in \Sigma_{w}$.
We also set
\begin{align}\label{eq:os}
\omega_{\lambda}^{w}:=\sum_{m \in A_{w}} \lb \lambda_{m}-\lambda_{w} \rb D_{m}^w, 
\quad \sigma^w:=\sum_{m \in A_{w}} D_{m}^w
\end{align}
and
\begin{align}
E_{v, w}:=
\lb \prod_{m \in A_w \cap \tau_v} \prod_{i=0}^{p_m-1} \lb D_m^w +i \rb \rb
\prod_{i=0}^{p_w-1} \lb \sigma^w -i \rb \in H^{\ast} \lb Y_w, \bR \rb,
\end{align}
where $p_m$ are the integers that we used to write $v= \sum_{m \in A \cap \tau_v} p_m \cdot m$, and if $w \nin \tau_v$, we set $p_w:=0$ and the last product in $E_{v, w}$ means the empty product.
We also define
\begin{align}
\widehat{\Gamma}_{w}&:=\frac{\prod_{m \in A_{w}} \Gamma \lb 1+D_m^w \rb}{\Gamma (1+\sigma^w)} \in H^{\ast} \lb Y_w, \bR \rb
\end{align}
using the power series expansion of the gamma function $\Gamma \lb 1+ x\rb$:
\begin{align}\label{eq:gamma-exp}
\Gamma(1+x)=\exp \lb -\gamma x+ \sum_{k=2}^\infty \frac{\lb -1 \rb^k}{k} \zeta(k) x^k \rb,
\end{align}
where $\gamma$ is the Euler constant, and $\zeta(k)$ is the Riemann zeta value.
Note that the restriction of the class $\widehat{\Gamma}_{w} \in H^{\ast} \lb Y_w, \bR \rb$ to an anticanonical hypersurface of the toric variety $Y_{w}$ is the gamma class of the hypersurface.
See \cite[Section 4.1]{MR4194298}.
One can also find a more explicit expression of $\widehat{\Gamma}_{w}$ at (15) in loc.cit.
The following are the main results of this article.

\begin{theorem}\label{th:main1}
We have
\begin{align}
\int_{C_t^{w}}  \Omega_t^{l, v} 
=
\left\{ \begin{array}{ll}
\frac{(-1)^{d+p_w}}{(l-1)!}
\int_{Y_{w}} 
t^{-\omega_{\lambda}^{w}}
\cdot 
\prod_{m \in A_{w}} 
\lb
-\frac{c_m}{c_{w}}
\rb^{-D_m^w}
\cdot 
E_{v, w}
\cdot 
\widehat{\Gamma}_w
+O \lb t^\epsilon \rb & \conv \lb \lc w \rc \cup \tau_v \rb \in \scrT \\
O \lb t^\epsilon \rb & \mathrm{otherwise}
\end{array} 
\right.
\end{align}
as $t \to +0$, for some constant $\epsilon >0$, where
\begin{align}
\lb
-\frac{c_m}{c_{w}}
\rb^{-D_m^w}
:=\exp \lb -D_m^w \log \lb -\frac{c_m}{c_{w}} \rb \rb
=\exp \lb -D_m^w 
\lb \log \left| -\frac{c_m}{c_{w}} \right|+\sqrt{-1} \arg \lb -\frac{c_m}{c_{w}} \rb \rb \rb
\end{align}
and the branch of $\arg \lb -c_m/c_{w} \rb$ is the one we chose when we constructed the cycle $C_t^w$.
\end{theorem}

\begin{theorem}\label{th:main2}
We have
\begin{align}
\int_{T_t^\sigma} \Omega_t^{l, v} 
=
\left\{ \begin{array}{ll}
-\lb 2 \pi \sqrt{-1} \rb^d + O \lb t^\epsilon \rb 
& \tau_v \mathrm{\ is\ an\ element\ in\ } W\ \mathrm{and}\ \trop(f)=\mathrm{val}(k_{\tau_v})+\la \tau_v, \bullet \ra \mathrm{on}\ \sigma \\
O \lb t^\epsilon \rb 
& \mathrm{otherwise}
\end{array} 
\right.
\end{align}
as $t \to +0$, for some constant $\epsilon >0$.
\end{theorem}

Needless to say, the signs of the results of integrations depend on the choices of orientations of the cycles $C_t^w, T_t^\sigma$.
Concerning these choices of orientations, see \pref{rm:orientation}.

One can see from \pref{th:main1} that Riemann zeta values (the effect of the gamma class) appear in the principal part of $\int_{C_t^{w}}  \Omega_t^{l, v}$ in general as in the case of Calabi--Yau manifolds.
We can also see from \pref{th:main1} that the affine volumes of bounded cells in the tropical hypersurface $X\lb \trop \lb f \rb \rb$ appear in the leading terms of the periods $\int_{C_t^{w}}  \Omega_t^{l, v}$.
This is discussed in \pref{sc:lead}.

The main result of \cite{MR4194298} is \pref{th:main1} with assumptions that 
\begin{itemize}
\item the polytope $\Delta$ is reflexive,
\item $l=1, v=w(=0), k_w=-1$, and
\item the coefficients $k_m$ $(m \in A \setminus \lc w \rc)$ can be written as $t^{\lambda_m}$ or $t^{\lambda_m} e^{\sqrt{-1}\theta}$ with $\theta \in \bR$.
\end{itemize}
(Notice that the orientations of cycles used in loc.cit.~are different from that used in this article.)
Our purpose of working over the convergent Puiseux series field over $\bC$ rather than simply setting $k_m=t^{\lambda_m}$ or $k_m=t^{\lambda_m} e^{\sqrt{-1}\theta}$ as in \cite{MR4194298} is to see which information of limits of Hodge structure are encoded by tropical hypersurfaces or not.
Indeed we can see from \pref{th:main1} that the asymptotics of the period depends also on the complex coefficients $c_m$ (not only their arguments but also their absolute values) which are thrown away by tropicalization.
The dependence of the asymptotics of periods on complex coefficients can be seen also from the work of \cite{MR4179831} for the case of toric degenerations.

When the polytope $\Delta$ is reflexive, one can also obtain \pref{th:main1} and \pref{th:main2} (as well as the main results of \cite{MR4194298}) by taking (the derivatives of) the asymptotic expansions of the result of \cite[Theorem 1.1]{MR3112512}.
For the correspondence between the cycles $C_t^w$ and the ambient line bundles on the mirror Calabi--Yau hypersurfaces, we refer the reader to \cite[Remark 1.3]{MR4194298}.

\subsection{Logarithmic Hodge theory}\label{sc:log}

Let $K'$ be the convergent Laurent series field, i.e., the field of Laurent series $\sum_{j \in \bZ}c_j x^j$ that have only finitely many coefficients with negative index and whose positive part is convergent in a neighborhood of $x=0$.
This is a subfield of $K$ of \pref{sc:main}.
We consider a polynomial $f=\sum_{m \in A} k_m z^m$ over $K'$.
If $\rho>0$ is a real number that is smaller than the radius of convergence of every coefficient $k_m$ of $f$, and $D_\rho^\ast:=\lc z \in \bC^\ast \relmid \left| z \right| < \rho \rc$, then one can substitute elements $q \in D_\rho^\ast$ to the indeterminate $x$ in the coefficients of $f$ to obtain a family $\lc Z_q \rc_q$ of complex hypersurfaces in the toric variety $Y_\Sigma$ over the punctured disc $D_\rho^\ast$.
It defines a variation of polarized Hodge structure over $D_\rho^\ast$, which extends to a \emph{logarithmic variation of polarized Hodge structure} (LVPH) of Kato--Usui \cite{MR2465224} over the whole disc $D_\rho:=\lc z \in \bC \relmid \left| z \right| < \rho \rc$.
When $d=1$, we can explicitly write down its restriction to the limit $0 \in D_\rho$ by \pref{th:main1} and \pref{th:main2}.
In the following, we assume that $d=1$ and the polynomial $f=\sum_{m \in A} k_m z^m$ satisfies the same assumptions as in \pref{sc:main}.

Let $t \in \bR_{>0}$ a number such that $t < \rho$.
Since $d=1$, the hypersurface $Z_t$ is a Riemann surface, and the tropical hypersurface $X\lb \trop \lb f \rb \rb$ is a tropical curve.
Let $\beta_w$ $(w \in W)$ be the cycle class in $H_1 \lb Z_t, \bZ \rb$ represented by $C_t^w$.
One can take the cycles $C_t^w$ so that we have $\la \beta_{w_0}, \beta_{w_1} \ra=0$ for any $w_0, w_1 \in W$, where $\la \bullet, \bullet \ra$ denotes the intersection pairing (\pref{lm:intersect}).
We can also take a basis $\lc \alpha_w \rc_{w \in W}$ of the subspace of $H_1 \lb Z_t, \bZ \rb$ generated by $\lc T_t^\sigma \rc_\sigma$ so that 
\begin{align}\label{eq:symp}
\la \alpha_{w_0}, \alpha_{w_1}\ra=0, \quad \la \alpha_{w_0}, \beta_{w_1}\ra=\delta_{w_0, w_1},
\end{align}
where $\delta_{w_0, w_1}$ is the Kronecker delta (i.e, $\lc \alpha_w, \beta_w \relmid w \in W \rc$ is a symplectic basis of $H_1 \lb Z_t, \bZ \rb$).
Let $\alpha_w^\ast, \beta_w^\ast$ $(w \in W)$ denote the dual basis of $H^1 \lb Z_t, \bZ \rb$, and define the nilpotent endomorphism $N \colon H^1 \lb Z_t, \bZ \rb \to H^1 \lb Z_t, \bZ \rb$ by
\begin{align}\label{eq:nilp}
N \lb \beta_w^\ast \rb=0, \quad
N \lb \alpha_w^\ast \rb=
\sum_{w' \in \lc w \rc \sqcup \lb A_w \cap W \rb}
(-1)^{1+\delta_{w, w'}}
\cdot
l(w, w')
\beta_{w'}^\ast,
\end{align}
where $l(w, w') \in \bR_{> 0}$ denotes the affine length of the $1$-cell in $\scrP$ that is dual to $\conv \lb \lc w, w' \rc \rb \in \scrT$ when $w \neq w'$, and the affine length of the boundary of the $2$-cell in $\scrP$ that is dual to $w \in \scrT$ when $w=w'$.
The data $\lc l \lb w, w' \rb \rc_{w, w' \in W}$ is exactly the \emph{tropical periods} of the tropical curve $X\lb \trop \lb f \rb \rb$ introduced in \cite{MR2457739}.
We also set
\begin{align}\label{eq:Pvw}
P \lb v, w \rb
:=
\left\{ \begin{array}{ll}
-\sum_{m \in A_{w}} 
\lb
\int_{Y_{w}} 
D_m^{w}
\cdot \sigma^{w}
\rb
\log 
\lb
-\frac{c_m}{c_{w}}
\rb
& v=w \\
\sum_{m \in A_{w}} 
\lb
\int_{Y_{w}} 
D_m^{w} \cdot D_{v}^{w}
\rb
\log
\lb
-\frac{c_m}{c_{w}}
\rb
& v \in A_{w} \\
0 & \mathrm{otherwise}
\end{array} 
\right.
\end{align}
for $v, w \in W$.

The one parameter family $\lc Z_q \rc_{q \in D_\rho^\ast}$ of complex hypersurfaces (curves) defines the variation of polarized Hodge structure over the punctured disk $D_\rho^\ast$, which extends to the LVPH on the whole disk $D_\rho$.

\begin{corollary}\label{cr:log}
The inverse image of the above LVPH by the inclusion $\lc 0 \rc \hookrightarrow D_\rho$ is isomorphic to the polarized logarithmic Hodge structure $\lb H_\bZ, Q, \scrF \rb$ at the standard log point $\lc 0 \rc$ given as follows:
\begin{itemize}
\item $H_\bZ$ is the locally constant sheaf on $\lc 0 \rc^\mathrm{log} \cong S^1$ whose stalk is isomorphic to
\begin{align}\label{eq:stalk}
H^1 \lb Z_t, \bZ \rb \cong \bigoplus_{w \in W} \bZ \alpha_w^\ast \oplus \bZ \beta_w^\ast
\end{align}
and the monodromy is given by $\exp \lb N \rb=\id+N$,
\item $Q \colon H_\bZ \times H_\bZ \to \bZ$ is the pairing given by the cup product $\la \bullet, \bullet \ra$ of $H^1 \lb Z_t, \bZ \rb$, and
\item $\scrF=\lc \scrF^p \rc_{p=0}^2$ is the decreasing filtration of $\scO_{\lc 0 \rc}^{\mathrm{log}} \otimes_{\bZ} H_\bZ \cong \scO_{\lc 0 \rc}^{\mathrm{log}} \otimes_{\bZ} H^1 \lb Z_t, \bZ \rb$ defined by
\begin{align}\label{eq:filtration0}
\scrF^p:=\scO_{\lc 0 \rc}^{\mathrm{log}} \otimes_{\bZ} F^p
\end{align}
with 
\begin{align}\label{eq:filtration}
F^p:=
\left\{ \begin{array}{ll}
H^1 \lb Z_t, \bZ \rb & p=0\\
\bigoplus_{v \in W} \bC \cdot \lb -2 \pi \sqrt{-1} \alpha_v^\ast + \sum_{w \in W} P(v, w) \beta_{w}^\ast \rb & p=1\\
\lc 0 \rc & p=2.
\end{array} 
\right.
\end{align}
\end{itemize}
\end{corollary}

Here $\lb \lc 0 \rc^\mathrm{log}, \scO_{\lc 0 \rc}^{\mathrm{log}} \rb$ is the Kato--Nakayama space associated with the standard log point $\lc 0 \rc$ \cite{MR1700591}.
We refer the reader to \cite[Section 2]{MR2465224} or \cite[Section 5.1]{MR4484542} for the definition of polarized logarithmic Hodge structure.

\subsection{Related work}\label{sc:relate}

The Hodge structure of hypersurfaces in toric varieties have been studied in many papers (e.g.~\cite{MR0260733, MR1290195, MR1733735, MR2019444}), and classical mirror symmetry for toric complete intersections was originally studied in \cite{MR1408320, MR1653024, MR1621573}.
In the case where $\Delta$ is reflexive (the case of Calabi--Yau hypersurfaces), the image of the Poincar\'{e} residue map \eqref{eq:Res} has the \emph{residual B-model Hodge structure} introduced by Iritani \cite{MR3112512}.
It is known to be isomorphic to the \emph{ambient A-model Hodge structure} of the mirror Calabi--Yau hypersurface via the mirror map (cf.~\cite[Theorem 6.9]{MR3112512}).
The asymptotics of the periods in this case can be written down by using Givental's $I$-function, and one can see that the polarized logarithmic Hodge structure at the limit can be described in terms of the tropical Calabi--Yau hypersurface obtained by tropicalization \cite{MR4484542}.

There is a homology theory for tropical varieties, which was introduced in \cite{MR3330789, MR3961331}.
A cycle representing a class of a tropical homology group is called a \emph{tropical cycle}, and it has a bidegree $(p, q)$ $(p, q \in \bZ_{\geq 0})$.
It is expected that classical cycles and tropical $(p, q)$-cycles correspond in such a way that a classical cycle admits a torus fibration structure over a tropical cycle.
The integers $p, q$ are the dimension of its fiber and the dimension of the tropical cycle respectively.
The cycles $C_t^w$ and $T_t^\sigma$ which we consider in this article correspond to a tropical $(0, d)$-cycle and $(d, 0)$-cycle on the tropical hypersurface $X \lb \trop \lb f \rb \rb$ respectively.

Concerning period integrals over cycles corresponding to tropical cycles of other degree, there is work by Ruddat and Siebert \cite{MR4179831, MR4347312}.
They computed integrals of holomorphic volume forms over cycles corresponding to tropical cycles of dimension $1$ for toric degenerations constructed from wall structures.
Their technique was also used by Wang \cite{Wan20} to compute central charges of the Hori--Vafa mirror of the canonical bundle of a smooth projective toric Fano variety.
(Notice that tropical cycles in these work are on integral affine manifolds with singularities rather than on tropical varieties.
Relations between tropical cycles of these two sorts of tropical spaces are studied in \cite{Yam21}.)

It is also known that tropical (co)homology groups correspond to the grade pieces of the limiting mixed Hodge structure of the corresponding degenerating family, and the residues of the logarithmic extensions of the Gauss--Manin connections correspond to the cup products of the invariants of tropical spaces called radiance obstructions or eigenwaves \cite{MR2669728, MR2681794, MR3330789, MR3961331}.
Radiance obstructions and eigenwaves are closely related to the affine volumes of (bounded cells in) tropical spaces.
The assumption that the triangulation $\scrT$ is unimodular corresponds to the smoothness assumption in tropical geometry.
Notice that in general, there are discrepancies between tropical cohomology groups and ordinary cohomology groups if we do not impose the smoothness assumption (cf.~\cite{MR2681794}).

\subsection{Organization of this article}

In \pref{sc:forms}, we review a description of meromorphic forms on toric varieties, which have poles along hypersurfaces.
In \pref{sc:sphere}, we construct the sphere cycle $C_t^w$.
We will prove \pref{th:main1} and \pref{th:main2} in \pref{sc:integral} and \pref{sc:integral2} respectively.
In \pref{sc:lead}, we will see that the affine volumes of bounded cells in tropical hypersurfaces appear in the leading terms of period integrals.
In \pref{sc:ex}, we give a concrete example illustrating \pref{th:main1}.
Lastly, in \pref{sc:log2}, we discuss polarized logarithmic Hodge structure at the limit in the case of $d=1$.
\pref{cr:log} is proved in this section.

\section{Forms having poles along hypersurfaces}\label{sc:forms}

We recall a description of meromorphic forms on toric varieties, which have poles along hypersurfaces.
In particular, we will see that forms of \eqref{eq:omega} generate $H^{0} \lb Y_\Sigma, \Omega^{d+1} \lb l \cdot Z_t \rb \rb$.
We basically follow \cite{MR1290195}.

Let $\Delta \subset M_\bR$ be a lattice polytope of dimension $d+1$, and $\Sigma$ be a rational simplicial fan in $N_\bR$, which is a refinement of the normal fan of $\Delta$.
We consider the toric variety $Y_\Sigma$ over $\bC$ associated with the fan $\Sigma$.
Recall that the class group $\operatorname{Cl} \lb Y_\Sigma \rb$ of the toric variety $Y_\Sigma$ is isomorphic to the cokernel of the map
\begin{align}
M \to \bigoplus_{\rho \in \Sigma(1)} \bZ D_\rho, \quad m \mapsto \sum_{\rho \in \Sigma(1)} \la m, n_\rho \ra D_\rho,
\end{align}
where $\Sigma(1)$ is the set of $1$-dimensional cones in the fan $\Sigma$, $D_\rho$ is the toric divisor on $Y_\Sigma$ corresponding to $\rho \in \Sigma(1)$, and $n_\rho \in N$ is the primitive generator of $\rho \in \Sigma(1)$.
The polynomial ring $S:=\bC \ld y_\rho \colon \rho \in \Sigma(1) \rd$ together with the natural grading by $\operatorname{Cl}(Y_\Sigma)$, which is defined by
\begin{align}
\deg \lb \prod_{\rho \in \Sigma(1)} y_\rho^{a_\rho} \rb:= \ld \sum_{\rho \in \Sigma(1)} a_\rho D_\rho \rd \in \operatorname{Cl}(Y_\Sigma)
\end{align}
is called the \emph{homogeneous coordinate ring} of the toric variety $Y_\Sigma$ \cite[Section 1]{MR1299003}.
For a class $\mu \in \operatorname{Cl}(Y_\Sigma)$, let $S_\mu \subset S$ denote the corresponding graded piece of $S$.
A homogeneous polynomial $F \in S_\mu$ defines a hypersurface $Z_F$ in the toric variety $Y_\Sigma$ (cf.~\cite[Section 3]{MR1290195}).
We write $1$-dimensional cones in $\Sigma$ as $\Sigma(1)=\lc \rho_0, \cdots, \rho_r \rc$.
For a subset $I=\lc i_0, \cdots, i_{d} \rc \subset \lc 0, \cdots, r \rc$ consisting of $d+1$ elements, we set
\begin{align}
\det \lb e_I \rb:=\det \lb \la e_j, n_{i_k} \ra_{0 \leq j, k \leq d} \rb,
\end{align}
where $e_0, \cdots, e_{d}$ are a basis of the lattice $M$.
We define the $(d+1)$-form $\Omega_0$ by
\begin{align}
\Omega_0:=\sum_{| I | =d+1} \det \lb e_I \rb \widehat{y_I} dy_I,
\end{align}
where 
$\widehat{y_I}:=\prod_{i \nin I} y_{\rho_i}$ and 
$dy_I:=\bigwedge_{i \in I} dy_{\rho_i}$ (\cite[Definition 9.3]{MR1290195})).

\begin{theorem}{\rm(\cite[Theorem 9.7]{{MR1290195}})}\label{th:BC}
One has 
\begin{align}\label{eq:h0}
H^0 \lb Y_\Sigma, \Omega^{d+1} (Z_F) \rb=\lc \frac{A\Omega_0}{F} \relmid A \in S_{\mu-\mu_0} \rc,
\end{align}
where $\mu_0:=\sum_{\rho \in \Sigma(1)} \deg \lb y_\rho \rb \in \operatorname{Cl}(Y_\Sigma)$.
\end{theorem}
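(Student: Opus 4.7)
The plan is to identify both sides of the claimed equality with the single graded piece $S_{\mu-\mu_0}$ of the Cox homogeneous coordinate ring, and to exhibit the map $A \mapsto A\Omega_0/F$ as an explicit trivialization.

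First I would invoke the canonical divisor formula $K_{Y_\Sigma}=-\sum_{\rho \in \Sigma(1)} D_\rho$, valid for any simplicial toric variety, which yields $\Omega^{d+1}_{Y_\Sigma} \cong \scO_{Y_\Sigma}(-\mu_0)$ in terms of the class group. Since $F \in S_\mu$ cuts out a hypersurface $Z_F$ of class $\mu$, one has $\scO(Z_F) \cong \scO(\mu)$ and hence
\begin{align}
\Omega^{d+1}(Z_F) \cong \scO(\mu-\mu_0).
\end{align}
Combined with Cox's correspondence $H^0 \lb Y_\Sigma, \scO(\nu) \rb \cong S_\nu$ (valid because the fan $\Sigma$ spans $N_\bR$, so $Y_\Sigma$ has no torus factors), this would give the abstract isomorphism $H^0 \lb Y_\Sigma, \Omega^{d+1}(Z_F) \rb \cong S_{\mu-\mu_0}$.

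To realize the isomorphism concretely in the form $A \mapsto A\Omega_0/F$, the key step is to verify that $\Omega_0$ defines a nowhere-vanishing global section of $\Omega^{d+1} \lb \sum_\rho D_\rho \rb$, i.e.~a trivialization of this (trivial) line bundle with simple logarithmic poles along all toric divisors. On the open torus $N_{\bC^\ast} \subset Y_\Sigma$, with coordinates $z_0, \ldots, z_d$ dual to a basis $e_0, \ldots, e_d$ of $M$ and with $z_j = \prod_\rho y_\rho^{\la e_j, n_\rho \ra}$ on the Cox cover, one needs the identity
\begin{align}
\frac{\Omega_0}{\prod_\rho y_\rho} = \bigwedge_{j=0}^{d} \frac{d z_j}{z_j},
\end{align}
which follows from a Cauchy--Binet-style determinantal expansion of the sum defining $\Omega_0$. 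Multiplying by any $A \in S_{\mu-\mu_0}$ and dividing by $F$ then produces a meromorphic $(d+1)$-form with first-order pole along $Z_F$, proving the inclusion ``$\supseteq$'' in \eqref{eq:h0}; the reverse inclusion follows from a dimension count via the Cox correspondence recalled above.

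The main obstacle is verifying that $\Omega_0$ truly extends across every toric boundary stratum with no spurious zeros or poles beyond the asserted simple poles along $\sum_\rho D_\rho$. On a maximal simplicial cone $\sigma \in \Sigma$ with $\sigma(1) = \lc \rho_{i_0}, \ldots, \rho_{i_d} \rc$, the affine chart $U_\sigma$ carries local coordinates in which only the single term of $\Omega_0$ indexed by $I_\sigma = \lc i_0, \ldots, i_d \rc$ contributes nontrivially (the remaining factors $\prod_{i \notin I_\sigma} y_{\rho_i}$ being invertible on $U_\sigma$), and one reads off directly that $\Omega_0 / \prod_\rho y_\rho$ has logarithmic poles of residue one along each $D_{\rho_{i_k}}$ and no other singularities. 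This is where the simpliciality hypothesis on $\Sigma$ enters essentially; once this local computation is in place, the pieces above assemble formally into the stated equality \eqref{eq:h0}.
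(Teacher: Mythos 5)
The paper does not prove this statement; it is quoted directly from Batyrev--Cox \cite[Theorem 9.7]{MR1290195}, so there is no in-paper argument to compare against and your proposal must stand on its own. Modulo one subtlety discussed below, your reconstruction is correct and matches what the cited source does: the class-group computation $[K_{Y_\Sigma} + Z_F] = \mu - \mu_0$, Cox's isomorphism $H^0(Y_\Sigma, \scO(\nu)) \cong S_\nu$, and the explicit identity $\Omega_0 / \prod_\rho y_\rho = \bigwedge_{i=0}^d d\log z_i$ (which the paper records independently as \eqref{eq:form}) combine to realize the isomorphism via $A \mapsto A\Omega_0/F$, with the reverse inclusion then following from a dimension count once you check the map is well defined and injective.

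The step to be careful about is writing $\Omega^{d+1}(Z_F) \cong \scO(\mu - \mu_0)$ as a sheaf isomorphism. Since $Y_\Sigma$ is assumed only simplicial, $\Omega^{d+1}_{Y_\Sigma}$ is the Zariski (reflexive, rank-one) sheaf rather than a line bundle, and for Weil divisors the naive tensor $\scO(D_1) \otimes \scO(D_2)$ need not equal $\scO(D_1+D_2)$ without passing to the reflexive hull. One therefore either interprets $\Omega^{d+1}(Z_F)$ as the reflexive sheaf $\scO(K_{Y_\Sigma}+Z_F)$, or imposes a Cartier hypothesis on $Z_F$ so that no reflexivization is needed. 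Batyrev--Cox work under hypotheses ensuring this, and in the present paper $Z_F$ is pulled back from a Cartier class on the normal fan of $\Delta$, so the point is harmless; but it is the one place your sketch silently relies on a convention or hypothesis that should be made explicit.
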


Since the fan $\Sigma$ is a refinement of the normal fan of the lattice polytope $\Delta$, one can write
\begin{align}
\Delta=\lc m \in M_\bR \relmid \la m, n_\rho \ra+ k_\rho \geq 0, \forall \rho \in \Sigma(1) \rc,
\end{align}
where $k_\rho$ is the integer defined by
\begin{align}\label{eq:supp}
k_\rho:=-\inf_{m \in \Delta} \la m, n_\rho \ra.
\end{align}
Hence, one also has
\begin{align}\label{eq:lD}
l \cdot \Delta=\lc m \in M_\bR \relmid \la m, n_\rho \ra+ l k_\rho \geq 0, \forall \rho \in \Sigma(1) \rc.
\end{align}

Let $f \in \bC \ld M \rd$ be a Laurent polynomial over $\bC$ whose Newton polytope is $\Delta$.
One can write the polynomial $f^l$ $(l \in \bZ_{>0})$ in the homogeneous coordinates by replacing every monomial $z^{m}$ $\lb m \in M \cap l \cdot \Delta \rb$ in $f^l$ with $\prod_{\rho \in \Sigma(1)} y_\rho^{\la m, n_\rho \ra}$ and multiplying $\prod_{\rho \in \Sigma(1)} y_\rho^{l k_\rho}$ so that we get an element in $S$.
We will write it as $F_l \in S$ in the following.
Similarly, the form $\bigwedge_{i=0}^d dz_i/z_i=\bigwedge_{i=0}^d d \log z_i$ with $z_i:=x^{e_i}$ can be written in the homogeneous coordinates as
\begin{align}\label{eq:form}
\bigwedge_{i=0}^{d} \lb \sum_{\rho \in \Sigma(1)} \la e_i, n_\rho \ra \cdot d \log y_\rho \rb=\sum_{| I | =d+1} \det \lb e_I \rb \frac{dy_I}{\prod_{i \in I} y_{\rho_i}}
=\frac{\Omega_0}{\prod_{\rho \in \Sigma(1)} y_\rho}.
\end{align}
We apply \pref{th:BC} to the homogeneous polynomial $F_l$.
Since $\deg \lb F_l \rb=\deg \lb \prod_{\rho \in \Sigma(1)} y_\rho^{l k_\rho} \rb$, the cohomology group \eqref{eq:h0} for $F_l$ is generated by elements
\begin{align}\label{eq:generator}
\lb \prod_{\rho \in \Sigma(1)} y_\rho^{l k_\rho-1} \rb \lb \prod_{\rho \in \Sigma(1)} y_\rho^{\la m, n_\rho \ra} \rb \frac{\Omega_0}{F_l}
\end{align}
with $m \in M$ such that $\lb \prod_{\rho \in \Sigma(1)} y_\rho^{l k_\rho-1} \rb \lb \prod_{\rho \in \Sigma(1)} y_\rho^{\la m, n_\rho \ra} \rb \in S$.
This condition for $m \in M$ holds if and only if $\la m, n_\rho \ra+ l k_\rho-1 \geq 0$ for all $\rho \in \Sigma(1)$.
One can see from \eqref{eq:lD} that this is equivalent to $m \in M \cap \Int \lb l \cdot \Delta \rb$.
Furthermore, we can see by \eqref{eq:form} that \eqref{eq:generator} is written in the affine coordinates as 
\begin{align}\label{eq:generator2}
\frac{z^m}{f^l} \bigwedge_{i=0}^d \frac{dz_i}{z_i}.
\end{align}
We can conclude that \eqref{eq:generator2} with $m \in M \cap \Int \lb l \cdot \Delta \rb$ define elements in $H^0 \lb Y_\Sigma, \Omega^{d+1} \lb Z_{F_l} \rb \rb$ and generate $H^0 \lb Y_\Sigma, \Omega^{d+1} \lb Z_{F_l} \rb \rb$.
Also in the setup of \pref{sc:intro}, we can see that \eqref{eq:omega} defines an element in $H^{0} \lb Y_\Sigma, \Omega^{d+1} \lb l \cdot Z_t \rb \rb$ and such forms generate $H^{0} \lb Y_\Sigma, \Omega^{d+1} \lb l \cdot Z_t \rb \rb$ since we have 
\begin{align}
\prod_{m \in A \cap \tau_v} \lb k_{m, t} \cdot z^m \rb^{p_m}=z^v \prod_{m \in A \cap \tau_v} \lb k_{m, t} \rb^{p_m}
\end{align}
and $v \in V_l:=M \cap \Int \lb l \cdot \Delta \rb$.

\begin{remark}
Forms of the form \eqref{eq:generator2} were originally considered in \cite{MR1203231} in the case of hypersurfaces in algebraic tori $N_{\bC^\ast}$.
It is known that such forms with $m \in M \cap \Int \lb l \cdot \Delta \rb$ generate the lowest weight component of the middle cohomology group of the complement of the hypersurface (\cite[Theorem 8.2]{MR1203231}).
\end{remark}

\section{Construction of sphere cycles}\label{sc:sphere}

In this section, we construct the sphere cycle $C_t^w$ $\lb w \in W \rb$ in \pref{sc:main} by using the technique in \cite[Section 5.2]{MR4194298}.
There are also similar or related constructions in \cite{MR2240909, MR2529936, MR2871160, MR3228454, MR3948684}.
Following \cite{MR4194298}, we construct the cycle $C_t^w$ by sliding the cycle that arises as the positive real locus of $\tilde{Z}_t^w$ to the purely imaginary direction and modifying it to an actual cycle in $Z_t$ by a small perturbation $\delta_t$.
The slide is given by \eqref{eq:Phi}, and the perturbation $\delta_t$ is made in \pref{pr:delta}.
The period of the Poincar\'{e} residue $\Omega_t^{l, v}$ of $\omega_t^{l, v}$ is given by
\begin{align}
\int_{C} \Omega_t^{l, v} 
=\frac{1}{2 \pi \sqrt{-1}} \int_{T(C)} \omega_t^{l, v}
\end{align}
for any $d$-cycle $C \subset Z_t$, where $T(C)$ denotes a tube over the cycle $C$ (the boundary of a small tubular neighborhood of $C$, which sits in $Y_\Sigma \setminus Z_t$) (cf.~e.g.~\cite[Section 8]{MR0260733}).
In order to compute period integrals over $C_t^w$, we will also construct a tube $T_t^w$ over $C_t^w$ in addition to $C_t^w$ itself in the same way as we construct $C_t^w$.
We work under the same assumptions and use the same notation as in \pref{sc:main}.

For $m \in A$, we set
\begin{align}
\mu_m \colon N_\bC \to \bC, \quad n \mapsto \lambda_m + \la m, n \ra,
\end{align}
where $\lambda_m:=\mathrm{val} (k_m)$.
Recall that the polyhedral decomposition $\scrP$ of $N_\bR$ induced by the tropical hypersurface $X\lb \trop \lb f \rb \rb \subset N_\bR$ is dual to the triangulation $\scrT$ of $\Delta$ induced by \eqref{eq:lambda} (cf.~e.g.~\cite[Proposition 2.1]{MR2079993}).
The correspondence is given by
\begin{align}\label{eq:dual}
\scrP \to \scrT, \quad \sigma \mapsto \conv \lb \lc m \in A \relmid \trop (f)(n)=\mu_m(n), \forall n \in \sigma \rc \rb.
\end{align}
We fix $w \in W$ for which we construct the cycle $C_t^w$, and define
\begin{align}\label{eq:nabla}
\nabla^w:=\lc n \in N_\bR \relmid \mu_m(n) \geq \mu_w(n), \forall m \in A \rc.
\end{align}
This is the element in $\scrP$ dual to $\lc w \rc \in \scrT$.
The normal fan of $\nabla^w$ is $\Sigma_w$ defined just after \eqref{eq:Aw}.
Due to the assumption that $\scrT$ is unimodular, the normal fan $\Sigma_w$ is also unimodular.
For a real constant $\kappa >0$, we set
\begin{align}\label{eq:nkw}
N_\kappa^w:=\lc n \in N_\bR \relmid \mu_w (n)-\kappa \leq \min_{m \in A \setminus \lc w \rc} \mu_m(n) \leq \mu_w(n)+\kappa \rc.
\end{align}
This is a neighborhood of $\partial \nabla^w$.
Notice that this set $N_\kappa^w$ is different from 
\begin{align}\label{eq:nk0}
N_\kappa \lb \Delta_\lambda \rb:=\lc n \in N_\bR \relmid \mu_w (n)-\kappa \leq \min_{m \in A \setminus \lc w \rc} \mu_m(n) \rc
\end{align}
which was considered in \cite[Section 5.2]{MR4194298} (for the case where $w=0$ and $\lambda_w=0$).
The reason why we consider \eqref{eq:nkw} rather than \eqref{eq:nk0} will  be explained later in \pref{rm:kappa}.

For an element $n \in N_\bR$, we also set
\begin{align}\label{eq:K}
K_\kappa^n&:= \lc k \in A \setminus \lc w \rc \relmid \mu_k(n) \leq \mu_w(n)+\kappa \rc \\ \label{eq:L}
L_\kappa^{n}&:= \lc k \in A \setminus \lc w \rc \relmid \mu_k(n) \leq \min_{m \in A \setminus \lc w \rc} \mu_m (n) +\kappa \rc.
\end{align}

\begin{example}\label{eg:curve}
Let $d=1$, and fix a basis $\lc e_1, e_2 \rc$ of the lattice $M \cong \bZ^2$.
Consider the polynomial
\begin{align}\label{eq:ex-poly}
f:=-x^{\lambda_0}z^0+\sum_{m \in \lb \Delta \cap M \rb \setminus \lc 0 \rc} x^{\lambda_m} z^m
\end{align}
with
\begin{align}
\Delta:=\conv \lb \lc 2e_1, -e_1, e_2, -e_2 \rc \rb \subset M_\bR, \quad
\lambda_m:=
\left\{ \begin{array}{ll}
3 & m=2e_1 \\
0 & m=0 \\
1 & m \in \lb \Delta \cap M \rb \setminus \lc 0, 2e_1 \rc.
\end{array} 
\right. 
\end{align}
The triangulation $\scrT$ of the Newton polytope $\Delta$ induced by \eqref{eq:lambda} and the tropicalization $V(\trop (f)) \subset N_\bR$ in this case are shown in \pref{fg:t-curve}.

\begin{figure}[htbp]
\begin{center}
\includegraphics[scale=0.4]{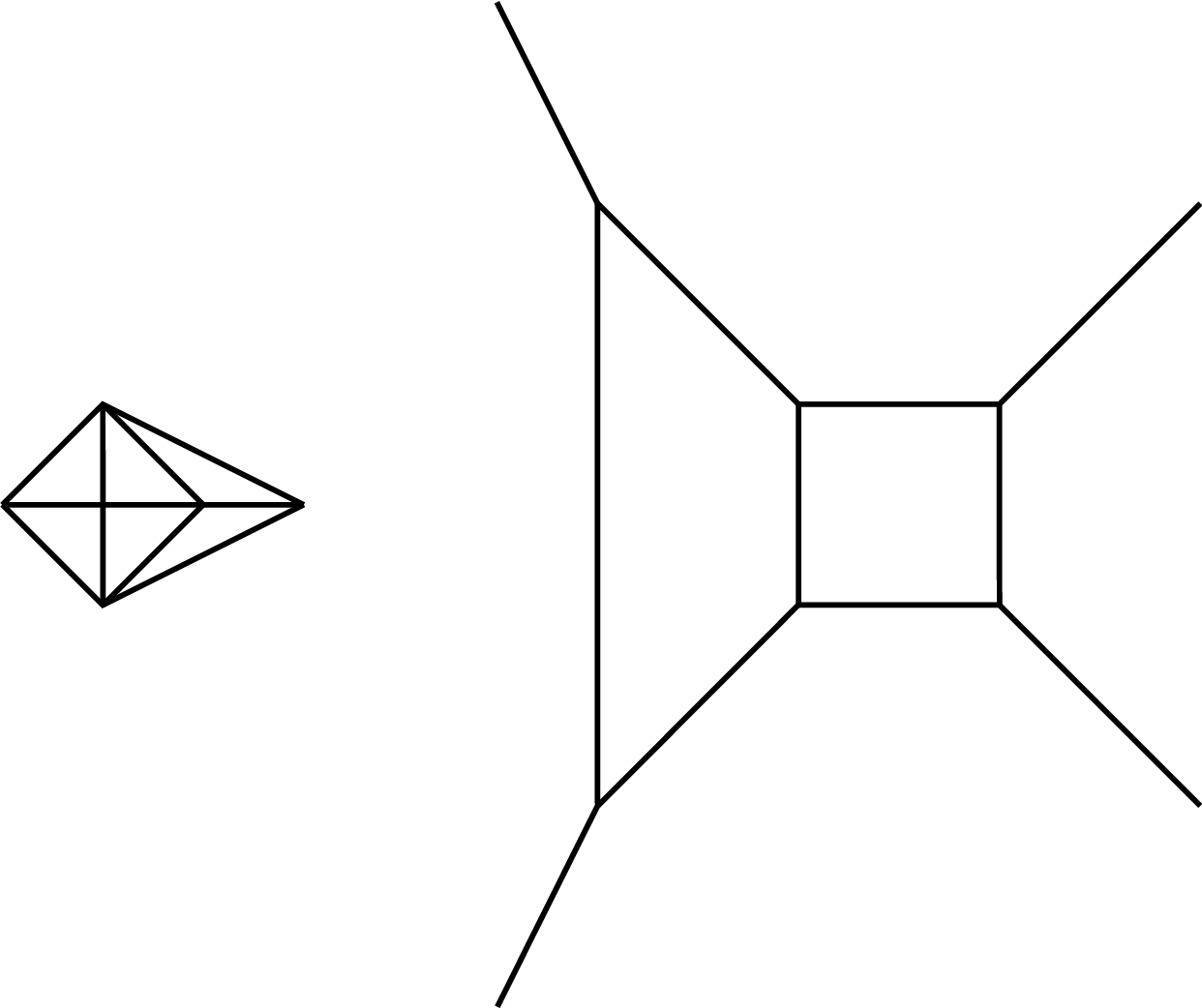}
\end{center}
\caption{The triangulation $\scrT$ of $\Delta$ and the tropicalization $V(\trop (f))$}
\label{fg:t-curve}
\end{figure}

The gray region on the left of \pref{fg:nbd} shows the subset $N_\kappa^w \subset N_\bR$ with $w=0$.
The green lines are parts of the tropicalization $V(\trop (f))$.
The figure in the middle (resp. on the right) in \pref{fg:nbd} is the partition of $N_\kappa^w$ such that the index set $K_\kappa^n$ (resp. $L_\kappa^n$) is constant on each piece and differs on different pieces.
For instance, $K_\kappa^n=\lc e_1, e_2 \rc$ (resp. $K_\kappa^n=\lc e_1 \rc$) on the red (resp. blue) region of the middle figure, and $L_\kappa^n=\lc e_1, e_2 \rc$ (resp. $L_\kappa^n=\lc e_1 \rc$) on the red (resp. blue) region of the right figure.

\begin{figure}[htbp]
\begin{center}
\includegraphics[scale=0.7]{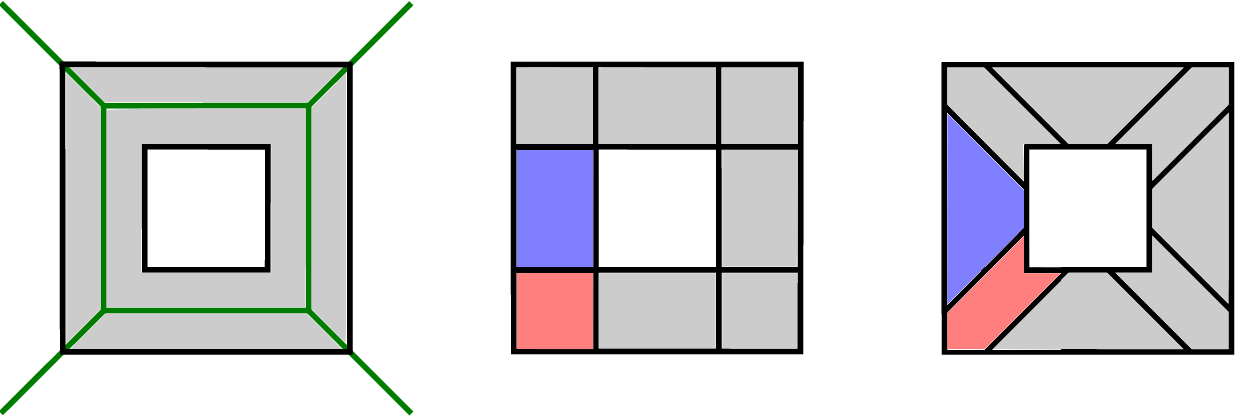}
\end{center}
\caption{The set $N_\kappa^w$ with $w=0$ and its partitions}
\label{fg:nbd}
\end{figure}

\end{example}

\begin{lemma}\label{lm:kappa}
If the constant $\kappa >0$ is sufficiently small, the following hold:
\begin{enumerate}
\item For any $n \in N_{\kappa}^w$, the convex hull of $\lc w \rc \cup K_\kappa^n$ is in the triangulation $\scrT$ of $\Delta$. 
\item For any $n \in N_{2\kappa}^w$, the convex hull of $\lc w \rc \cup L_\kappa^n$ is also in $\scrT$.
\item For any monomial $k_m z^m$ $(m \in A)$ of the polynomial $f$, the common denominator $n$ appearing in the coefficient $k_m= \sum_{j \in \bZ}c_j x^{j/n}$ satisfies $1/n >2 \kappa$.
\end{enumerate}
\end{lemma}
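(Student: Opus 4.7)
Condition (3) can be imposed independently: $A$ is finite and each $k_m \in K$ has some minimal denominator $n_m$, so any $\kappa < 1/(2 \max_{m \in A} n_m)$ suffices. For (1) and (2) I would run a uniform compactness argument based on the duality $\scrP \leftrightarrow \scrT$ and the boundedness of $\nabla^w$ (which follows from $w \in \Int \Delta$).

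The preliminary observation is that $N_\kappa^w$ and $N_{2\kappa}^w$ both sit inside the sublevel set $\lc n \in N_\bR \relmid \mu_w(n) - \trop(f)(n) \leq 2\kappa \rc$: on $\nabla^w$ the left side vanishes, while off $\nabla^w$ one has $\min_{m \neq w} \mu_m = \trop(f)$, so the defining inequality of $N_\kappa^w$ (resp.~$N_{2\kappa}^w$) directly yields the bound. The function $\mu_w - \trop(f)$ is nonnegative, vanishes exactly on the bounded cell $\nabla^w$, and tends to $+\infty$ at infinity (on each unbounded adjacent cell $\nabla^{w'}$ its growth is governed by $\la w - w', \bullet \ra$, strictly positive on the recession cone since $w \in \Int \Delta$), so every sublevel set is compact. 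Fix one such compact set $U$ containing $N_\kappa^w \cup N_{2\kappa}^w$ for all small $\kappa$; then $U$ meets only finitely many cells $\sigma \in \scrP$.

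Fix such a $\sigma$ with dual simplex $\tau_\sigma \in \scrT$. I aim to prove (a) $K_\kappa^n \subseteq \tau_\sigma$ and $L_\kappa^n \subseteq \tau_\sigma$, and (b) $\lc w \rc \cup \tau_\sigma$ is the vertex set of a simplex of $\scrT$; since $\scrT$ is a simplicial complex, these together force $\conv(\lc w \rc \cup K_\kappa^n)$ and $\conv(\lc w \rc \cup L_\kappa^n)$ to be simplices of $\scrT$. For (a), each affine function $\mu_m - \trop(f)$ with $m \notin \tau_\sigma$ is strictly positive on the relative interior of $\sigma$, hence bounded below on the compact set $\overline{\sigma} \cap U$ by some $c_{\sigma,m} > 0$; taking $\kappa < c_{\sigma,m}/3$ prevents $m$ from satisfying either $\mu_m \leq \trop(f) + 2\kappa$ (which governs $K_\kappa^n$) or $\mu_m \leq \trop(f) + 3\kappa$ (which governs $L_\kappa^n$). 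For (b), if $w \in \tau_\sigma$ there is nothing to do; otherwise I would argue by contradiction that $\overline{\sigma} \cap \nabla^w \neq \emptyset$: if disjoint, the nonnegative affine function $\mu_w - \trop(f)$ is strictly positive on $\overline{\sigma}$, hence bounded below on $\overline{\sigma} \cap U$ by some $c_\sigma^w > 0$, incompatible with $\mu_w - \trop(f) \leq 2\kappa$ for $\kappa < c_\sigma^w / 2$. Once nonemptiness is secured, the dual simplex $\tau_F \in \scrT$ of the common face $F$ contains both $\tau_\sigma$ (since $F \subseteq \overline{\sigma}$) and $\lc w \rc$ (since $F \subseteq \nabla^w$), so $\lc w \rc \cup \tau_\sigma$ is a face of $\tau_F$ and hence itself a simplex of $\scrT$.

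Taking $\kappa$ to be the minimum of the finitely many bounds above (one for each pair $(\sigma, m)$ with $m \notin \tau_\sigma$, one for each $\sigma$ with $w \notin \tau_\sigma$, plus the bound from (3)) completes the argument. The main obstacle is arranging a single $\kappa$ that makes all the gap estimates simultaneously valid across the relevant $n$, $\sigma$, and $m$; compactness of $U$, which in turn rests on $w \in \Int \Delta$, reduces this to a finite minimization.
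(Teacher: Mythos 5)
Your treatment of condition (3) and the overall idea of reducing (1)--(2) to finitely many checks is reasonable, but there is a genuine gap in the central compactness step, and a more structural issue in the choice of $\sigma$ that the gap obscures.

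The claim that for $m \notin \tau_\sigma$ the affine function $\mu_m - \trop(f)$ is ``bounded below on the compact set $\overline{\sigma} \cap U$ by some $c_{\sigma,m} > 0$'' does not follow from its strict positivity on the relative interior of $\sigma$, and is in fact false. In the duality $\scrP \leftrightarrow \scrT$, a proper face $\sigma' \prec \sigma$ of $\sigma$ has dual simplex $\tau_{\sigma'} \supsetneq \tau_\sigma$. For any $m \in \tau_{\sigma'} \setminus \tau_\sigma$, the function $\mu_m - \trop(f)$ vanishes identically on $\sigma'$, hence on a subset of $\overline{\sigma}$, so its infimum over $\overline{\sigma} \cap U$ is $0$. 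Because of this, the intended conclusion (a) is also not the right statement: if $n \in N_\kappa^w$ lies near $\partial\sigma$, then $K_\kappa^n$ picks up elements of $\tau_{\sigma'}$ that are not in $\tau_\sigma$, so $K_\kappa^n \subseteq \tau_\sigma$ fails no matter how small $\kappa$ is. You would need to select, for each $n$, a cell $\sigma_n$ whose dual $\tau_{\sigma_n}$ is large enough to absorb all $\kappa$-near minimizers at $n$, and it is not clear how to uniformize that choice by bounding one affine function per pair $(\sigma, m)$.

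The paper sidesteps this by introducing $J_\kappa^n := \lc k \in A \relmid \mu_k(n) \leq \trop(f)(n) + \kappa \rc$ and proving the uniform statement that $\conv(J_\kappa^n) \in \scrT$ for all $n$ once $\kappa$ is small: one observes that shifting each $\lambda_m$ down by $c_m := \max\lb 0, \kappa - (\mu_m(n) - \trop(f)(n)) \rb$ turns every element of $J_\kappa^n$ into a genuine minimizer at $n$ without leaving the secondary cone of $\scrT$, provided $\kappa$ is below the secondary-fan threshold $k_0$. This threshold is a single combinatorial constant depending only on $\lambda$ and $\scrT$, so no compactness argument is needed. The lemma then follows from the elementary inclusions $\lc w \rc \cup K_\kappa^n \subseteq J_{2\kappa}^n$ for $n \in N_\kappa^w$ and $\lc w \rc \cup L_\kappa^n \subseteq J_{3\kappa}^n$ for $n \in N_{2\kappa}^w$, and the fact that a simplicial complex is closed under taking faces. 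You should replace the pointwise lower bounds on $\mu_m - \trop(f)$ with an argument of this type that is stable under passing to the closure of a cell.
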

\begin{proof}
The last condition (3) obviously holds since $A$ is a finite set.
We discuss (1) and (2).
Take a small constant $k_0>0$ such that for any $\lc c_m \in \ld 0, k_0 \rd \rc_{m \in A}$, even if we replace each $\lambda_m$ with $\lambda_m-c_m$, the induced triangulation $\scrT$ of $\Delta$ does not change.
For an element $n \in N_\bR$, we set
\begin{align}
J_\kappa^n:=\lc k \in A \relmid \mu_k (n) \leq \trop \lb f \rb(n)+\kappa \rc.
\end{align}
When $\kappa \leq k_0$, we have $\conv \lb J_\kappa^n \rb \in \scrT$ for any $n \in N_\bR$.

Suppose $\kappa < k_0/3$.
For any $n \in N_{\kappa}^w$ and $k \in K_\kappa^n$, we have
\begin{align}
\trop \lb f \rb(n)=\min \lc \mu_w(n), \min_{m \in A \setminus \lc w \rc} \mu_m (n) \rc 
\geq \mu_w(n)-\kappa 
\geq \mu_k (n)-2\kappa.
\end{align}
From this, we can see that $\lc w \rc \cup K_\kappa^n$ is a subset of $J_{2\kappa}^n$.
Since $2\kappa<k_0$ and $\scrT$ is a triangulation, we have 
\begin{align}
\conv \lb \lc w \rc \cup K_\kappa^n \rb \prec
\conv \lb J_{2\kappa}^n \rb \in \scrT.
\end{align}
Similarly, for any $n \in N_{2\kappa}^w$ and $k \in L_\kappa^n$, we have
\begin{align}
\trop \lb f \rb(n)=\min \lc \mu_w(n), \min_{m \in A \setminus \lc w \rc} \mu_m (n) \rc 
\geq \min_{m \in A \setminus \lc w \rc} \mu_m (n)-2\kappa 
\geq \mu_k (n)-3\kappa.
\end{align}
We also have $\trop \lb f \rb(n) \geq \mu_w(n)-2\kappa$.
From these, we can see that $\lc w \rc \cup L_\kappa^n$ is a subset of $J_{3\kappa}^n$.
Again, since $3\kappa<k_0$ and $\scrT$ is a triangulation, we have 
\begin{align}
\conv \lb \lc w \rc \cup L_\kappa^n \rb \prec
\conv \lb J_{3\kappa}^n \rb \in \scrT.
\end{align}
We obtained the claim.
\end{proof}

\begin{remark}\label{rm:kappa}
In \cite[Section 5.2]{MR4194298}, they also take a constant $\kappa>0$ that plays the same role.
The above (1) and (2) in \pref{lm:kappa} correspond to the conditions (i) and (ii) in \cite[Section 5.2]{MR4194298} respectively.
Concerning (2), they take a constant $\kappa>0$ so that $L_\kappa^n$ is linearly independent for any $n \in N_\bR$.
To be precise, this is not possible in general also in the setup of \cite{MR4194298}.
For instance, consider the case where $d=1$, $w=0$, and
\begin{align}
\Delta:=\conv \lb \lc \pm e_1, \pm e_2 \rc \rb \subset M_\bR \cong \bR^2, \quad
\lambda(m)
=
\left\{ \begin{array}{ll}
0 & m=0 \\
1 & m= \pm e_1 \\
2 & m= \pm e_2,
\end{array} 
\right.
\end{align}
where $\lc e_1, e_2 \rc$ is a basis of $M \cong \bZ^2$.
The set $L_\kappa^n$ with $n \in \lc r e_2^\ast \in N_\bR \cong \bR^2 \relmid -1 \leq r \leq 1 \rc$ contains $\pm e_1$ for any $\kappa \geq 0$, and is not linearly independent.
This affects \cite[Lemma 5.6]{MR4194298} which corresponds to \pref{lm:grad} of this article, since we use the condition (ii) for proving the lemma.
In this article, we impose the condition (2) only for elements in $N_{2\kappa}^w$ so that we are able to take a constant $\kappa>0$ satisfying the conditions.
This is the reason why we consider \eqref{eq:nkw}.
Since we impose the condition (2) only for elements in $N_{2\kappa}^w$, the claim of \cite[Lemma 5.6]{MR4194298} (or \pref{lm:grad} of this article) gets weaker (cf.~\pref{rm:kappa2}).
However, this is actually sufficient for our purpose of constructing a perturbation $\delta_t$ in \pref{pr:delta} as we will see in the proof of \pref{pr:delta}.
\end{remark}

We fix $\kappa>0$ satisfying the conditions of \pref{lm:kappa}.
For an element $n \in N_{\kappa}^w$, we have $K_\kappa^n \neq \emptyset$ since an element $k \in A \setminus \lc w \rc$ such that $\mu_k(n)=\min_{m \in A \setminus \lc w \rc} \mu_m(n)$ is contained in $K_\kappa^n$.
For every cone $C \in \Sigma_w \setminus \lc 0 \rc$, we consider the subset
\begin{align}\label{eq:N-dec}
N_{\kappa}^w(C):=\lc n \in N_{\kappa}^w \relmid \bR_{\geq 0} \cdot \lb \conv \lb \lc w \rc \cup K_\kappa^n \rb-w \rb =C \rc.
\end{align}
By (1) of \pref{lm:kappa}, the subsets $\lc N_{\kappa}^w(C) \relmid C \in \Sigma_w \setminus \lc 0 \rc \rc$ cover the whole $N_{\kappa}^w$.
Notice also that $N_{\kappa}^w(C)$ is contained in a small neighborhood of the cell in $\scrP$ dual to $\conv \lb \lc w \rc \cup K_\kappa^n \rb \in \scrT$ with $n \in N_{\kappa}^w(C)$.

Let $\varepsilon >0$ be another small real number, and set $D_\varepsilon:=\lc x \in \bC \relmid |x| < \varepsilon \rc$.
(Notice that we distinguish this $\varepsilon$ and $\epsilon$ appearing in \pref{th:main1} and \pref{th:main2}.)
We choose a smooth function $\phi \colon N_{\kappa}^w \times D_\varepsilon \to N_\bR$ such that
\begin{align}\label{eq:phase-shift}
\la m-w, \phi(n, x) \ra = \Arg \lb 1+x \rb - \arg \lb -\frac{c_m}{c_w} \rb
\end{align}
for any $n \in N_{\kappa}^w, x \in D_\varepsilon$, and $m \in K_\kappa^n$.
Here $\Arg$ denotes the principal value of the argument, i.e., the value of the argument in $\lb - \pi, \pi \rd$.
The branch of $\arg \lb -c_m/c_w \rb$ is the one we choose in order to transport the positive real locus $\tilde{Z}_t^w \cap N_{\bR_{>0}}$ in \eqref{eq:ch-family}.
Such a function $\phi$ is called a \emph{phase-shifting function} in \cite[Section 5.2]{MR4194298}.
We refer the reader to Example 5.4 in loc.cit.~for an example of a function $\phi$ (with $w=0, x=0, c_w=-1$).

The function $\phi$ can be constructed as follows: 
We take a Euclidean metric on $N_\bR \cong \bR^{d+1}$, and identify $N_\bR \cong M_\bR$.
For $x \in D_\varepsilon$, we consider the piecewise linear function on the normal fan $\Sigma_w$ of $\nabla^w$, which takes the value $\Arg \lb 1+x \rb - \arg \lb -c_m/c_w \rb$ at the primitive generator $(m-w)$ of the $1$-dimensional cone $\bR_{\geq 0} \cdot (m-w) \in \Sigma_w$.
By varying $x \in D_\varepsilon$, we obtain a function on $M_\bR \times D_\varepsilon \cong N_\bR \times D_\varepsilon$.
We further take the gradient of a smoothing of it, and compose it with the projection
\begin{align}
T \lb N_\bR \times D_\varepsilon \rb \cong T \lb N_\bR \rb \times T \lb D_\varepsilon \rb 
\cong T_0 \lb N_\bR \rb \times N_\bR \times T \lb D_\varepsilon \rb 
\to T_0 \lb N_\bR \rb \cong N_\bR,
\end{align}
where $T$ denotes the tangent bundle, and $T_0 \lb N_\bR \rb$ is the tangent space at $0 \in N_\bR$.
By taking its translation such that the origin $0 \in N_\bR$ moves to a point in $\Int \lb \nabla^w \rb$ and the subset $N_{\kappa}^w(C)$ of \eqref{eq:N-dec} for any $C \in \Sigma_w \setminus \lc 0 \rc$ is contained in the union of cones in $\Sigma_w$ containing $C$, we obtain the function $\phi$.

We set
\begin{align}
N_{\kappa, \bC}^w:=\lc n \in N_\bC \relmid \Re \lb n \rb \in N_\kappa^w \rc.
\end{align}
We also consider the maps
\begin{align}\label{eq:Phi}
\Phi_t \colon N_{\kappa}^w \times D_\varepsilon \to N_{\kappa, \bC}^w, \quad (n, x) \mapsto n+\sqrt{-1} \cdot \frac{\phi(n, x)}{\log t},
\end{align}
and
\begin{align}
i_t \colon N_\bC \to N_{\bC^\ast}
\end{align}
induced by $\bC \to \bC^\ast, c \mapsto t^c$, where $t >0$.
We set
\begin{align}
R_t:=\lc n \in N_\bR \relmid \frac{1}{2} \leq \tilde{f}_t^w \lb i_t(n) \rb \leq \frac{3}{2} \rc.
\end{align}

\begin{lemma}
One has $R_t \subset N_\kappa^w$ for sufficiently small $t>0$.
\end{lemma}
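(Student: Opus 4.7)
The plan is to argue by contrapositive: show that for sufficiently small $t > 0$, uniformly in $n$, every $n \in N_\bR \setminus N_\kappa^w$ must fall outside $R_t$.

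The first step is to compute $\tilde{f}_t^w(i_t(n))$ explicitly. Since $i_t$ sends $n \in N_\bR$ to the positive real point whose $i$-th coordinate is $t^{n_i}$, evaluating a monomial $z^{m-w}$ at $i_t(n)$ gives $t^{\la m-w, n \ra}$. Hence
\[
\tilde{f}_t^w(i_t(n)) = \sum_{m \in A \setminus \{w\}} \frac{r_m}{r_w}\, t^{\lambda_m - \lambda_w + \la m-w, n \ra} = \sum_{m \in A \setminus \{w\}} \frac{r_m}{r_w}\, t^{\mu_m(n) - \mu_w(n)},
\]
a sum of positive real terms whose size in $t$ is controlled by the scalar $\alpha(n) := \min_{m \neq w}(\mu_m(n) - \mu_w(n))$. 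By the definition of $N_\kappa^w$, the complement $N_\bR \setminus N_\kappa^w$ is precisely $\{n \in N_\bR : |\alpha(n)| > \kappa\}$.

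Assume $0 < t < 1$, so that $x \mapsto t^x$ is strictly decreasing. If $\alpha(n) > \kappa$, then every exponent satisfies $\mu_m(n) - \mu_w(n) \geq \alpha(n) > \kappa$, hence every term is bounded by $(r_m/r_w)\, t^\kappa$, giving $\tilde{f}_t^w(i_t(n)) \leq C\, t^\kappa$ with $C := \sum_{m \neq w} r_m/r_w$. If instead $\alpha(n) < -\kappa$, pick an index $m_0$ attaining the minimum and discard the remaining nonnegative summands: then $\tilde{f}_t^w(i_t(n)) \geq (r_{m_0}/r_w)\, t^{\alpha(n)} \geq c\, t^{-\kappa}$ with $c := \min_{m \neq w} r_m/r_w > 0$. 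Crucially, both $C$ and $c$ are positive constants independent of $n$, because $A$ is finite.

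Finally, choose $t_0 > 0$ so small that $C\, t_0^\kappa < \tfrac{1}{2}$ and $c\, t_0^{-\kappa} > \tfrac{3}{2}$. Then for every $t \in (0, t_0)$ and every $n \in N_\bR \setminus N_\kappa^w$, the dichotomy above forces $\tilde{f}_t^w(i_t(n)) < \tfrac{1}{2}$ or $\tilde{f}_t^w(i_t(n)) > \tfrac{3}{2}$, so $n \notin R_t$. Equivalently, $R_t \subset N_\kappa^w$ for all such $t$. The only point that requires attention is the uniformity of the constants $c$ and $C$ in $n$, and this is immediate from the finiteness of $A$; there is no substantive obstacle beyond the elementary monotonicity of $x \mapsto t^x$.
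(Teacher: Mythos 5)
Your proof is correct and follows exactly the same route as the paper: argue by contrapositive, split $N_\bR \setminus N_\kappa^w$ into the two cases according to the sign of $\alpha(n) = \min_{m\neq w}\mu_m(n) - \mu_w(n)$, and obtain a uniform upper bound $C t^\kappa$ or lower bound $c\,t^{-\kappa}$ for $\tilde{f}_t^w(i_t(n))$ using the finiteness of $A$. The only difference is cosmetic (you name $\alpha(n)$ and use $\sum_m r_m/r_w$ where the paper uses $|A\setminus\{w\}| \cdot \max_m r_m/r_w$); the estimates and the conclusion are identical.
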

\begin{proof}
Let $n \in N_\bR \setminus N_\kappa^w$ be an element.
We will show $n \nin R_t$ for sufficiently small $t>0$.
We have either $ \mu_w (n)-\kappa > \min_{m \in A \setminus \lc w \rc} \mu_m(n)$ or $\min_{m \in A \setminus \lc w \rc} \mu_m(n) > \mu_w(n)+\kappa$.
In the former case, we have
\begin{align}
\tilde{f}_t^w \lb i_t(n) \rb \geq \frac{r_{m_0}}{r_w} \cdot t^{\lb \mu_{m_0}-\mu_w \rb(n)}
\geq \frac{r_{m_0}}{r_w} \cdot t^{-\kappa}
\geq C \cdot t^{-\kappa},
\end{align}
where $m_0 \in A \setminus \lc w \rc$ is an element such that $\mu_{m_0}(n)=\min_{m \in A \setminus \lc w \rc} \mu_m(n)$, and $C:=\min_{m \in A \setminus \lc w \rc} r_m /r_w>0$.
When $t>0$ is sufficiently small, we have $C \cdot t^{-\kappa} > 3/2$, which implies $n \nin R_t$.
Also in the latter case, we have
\begin{align}
\tilde{f}_t^w \lb i_t(n) \rb=\sum_{m \in A \setminus \lc w \rc} \frac{r_m}{r_w} t^{\lb \mu_m-\mu_w\rb(n)}
\leq C' \cdot t^{\kappa},
\end{align}
where $C':=\left| A \setminus \lc w \rc \right| \cdot \max_{m \in A \setminus \lc w \rc} r_m /r_w>0$.
When $t>0$ is sufficiently small, we have $C \cdot t^{\kappa} < 1/2$, which implies $n \nin R_t$.
We obtained the claim.
\end{proof}

\begin{proposition}{\rm(cf.~\cite[Proposition 5.3]{MR4194298})}\label{pr:delta}
For sufficiently small $t >0$, there exists a smooth map $\delta_t \colon R_t \times D_\varepsilon \to N_\bC$ satisfying the following conditions:
\begin{enumerate}
\item For all $(n, x) \in R_t \times D_\varepsilon$, one has
\begin{align}\label{eq:xff}
\frac{1}{1+x} \cdot f_t^w \lb i_t \lb \Phi_t(n, x)+\delta_t(n, x) \rb \rb=\frac{1}{|1+x|} \cdot \tilde{f}_t^w \lb i_t(n) \rb.
\end{align}
\item $\left| \left| \delta_t \right| \right|_{C^1} = O(t^\kappa)$, where $|| \bullet ||_{C^1}$ denotes the $C^1$-norm over $R_t \times D_\varepsilon$.
\end{enumerate}
\end{proposition}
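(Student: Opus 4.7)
My plan is to treat \eqref{eq:xff} as a single complex equation in $\delta \in N_\bC$, verify that the ansatz $\delta = 0$ already solves it up to an error of size $O(t^\kappa)$, and then apply a quantitative implicit function theorem on a carefully chosen one-complex-dimensional slice of $N_\bC$.

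\textbf{Leading-order check.} First I would compute $(1+x)^{-1}f_t^w(i_t(\Phi_t(n,x)))$ directly. Using $i_t(\Phi_t(n,x)) = t^n\exp(\sqrt{-1}\,\phi(n,x))$ and expanding $-k_{m,t}/k_{w,t} = -(c_m/c_w)\,t^{\lambda_m-\lambda_w}(1+O(t^{1/n}))$, condition~(3) of \pref{lm:kappa} ensures that the subleading Puiseux corrections contribute only $O(t^\kappa)$. For the dominant terms $m \in K_\kappa^n$, the phase-shift identity \eqref{eq:phase-shift} gives
\[
(-c_m/c_w)\exp\bigl(\sqrt{-1}\langle m-w,\phi(n,x)\rangle\bigr) \;=\; \frac{r_m}{r_w}\cdot\frac{1+x}{|1+x|},
\]
which matches the corresponding contribution to $(1+x)/|1+x|\cdot\tilde f_t^w(i_t(n))$ exactly. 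The remaining terms with $m \notin K_\kappa^n$ satisfy $\mu_m(n)-\mu_w(n) > \kappa$ on $R_t \subset N_\kappa^w$, so they contribute $O(t^\kappa)$. This yields $G_t(n,x,0) := (1+x)^{-1}f_t^w(i_t(\Phi_t(n,x))) - |1+x|^{-1}\tilde f_t^w(i_t(n)) = O(t^\kappa)$.

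\textbf{Solving by implicit function theorem.} To promote this to an exact solution I would parameterize $\delta = s\cdot\eta(n)$ with $s\in\bC$, where $\eta(n)\in N_\bR$ is assembled by a partition of unity subordinate to the decomposition \eqref{eq:N-dec}; on each piece $N_\kappa^w(C)$ I pick $\eta(n)$ in the interior of $C$ so that
\[
\sum_{m\in K_\kappa^n}\frac{r_m}{r_w}\,t^{\mu_m(n)-\mu_w(n)}\,\langle m-w,\eta(n)\rangle
\]
is bounded below by a positive constant independent of $t,n,x$. Up to a nonzero factor coming from the chain rule through $i_t\circ\Phi_t$, this sum is $\partial_s G_t|_{s=0}$, so the derivative is uniformly invertible. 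A Banach contraction (equivalently Newton iteration) then produces a smooth $s_t(n,x)\in\bC$ with $|s_t|=O(t^\kappa)$ solving $G_t(n,x,s_t\eta(n))=0$, and $\delta_t := s_t\eta$ satisfies condition~(1). Differentiating $G_t(n,x,\delta_t(n,x))=0$ in $(n,x)$ and combining the uniform lower bound on $\partial_s G_t|_{s=0}$ with the standard estimates on $D\Phi_t$, $D\phi$, and $D(\tilde f_t^w\circ i_t)$ over $R_t\times D_\varepsilon$ upgrades the pointwise bound to $\|\delta_t\|_{C^1}=O(t^\kappa)$.

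\textbf{Main obstacle.} The delicate point is making the choice of $\eta(n)$ simultaneously smooth in $n$ and uniformly good in $t$: the set $K_\kappa^n$ jumps discontinuously as $n$ crosses strata of $\scrP\cap N_\kappa^w$, and the remark following \pref{lm:kappa} --- together with the fact that condition~(2) there has to be restricted to $N_{2\kappa}^w$ --- already warns that naive global choices fail. Patching the cone-by-cone selections of $\eta(n)$ via a partition of unity while preserving the uniform lower bound on $\partial_s G_t|_{s=0}$ across overlaps is the technical heart of the argument, and is precisely the analogue of \cite[Proposition~5.3]{MR4194298} in the present generality.
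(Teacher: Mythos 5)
Your plan is a genuinely different route from the paper's. The paper never picks a direction: it defines the discrepancy $\xi_t^w$ exactly as your $G_t(n,x,0)$, proves $|\xi_t^w|=O(t^\kappa)$ (same as your leading-order check), and then flows along the \emph{gradient} of $g_t^w(\bullet,x)=(1+x)^{-1}f_t^w\circ i_t$ via the ODE $\tfrac{d}{ds}c=\xi_t^w\cdot\grad g_t^w/|\grad g_t^w|^2$, setting $\delta_t=c(\cdot,\cdot,1)-\Phi_t$. The entire quantitative content is then outsourced to \pref{lm:grad}, a uniform lower bound $|\grad g_t^w|\gtrsim -\log t$ on $N_{2\kappa,\bC}^w$, which follows from the linear independence of $L_\kappa^{\Re(n)}-w$ guaranteed by unimodularity. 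The $C^1$-estimate comes from differentiating the ODE and applying a Gronwall-type lemma. You instead restrict $\delta$ to a one-complex-dimensional slice $s\eta(n)$ and invoke a quantitative IFT. In principle this works, but the gradient-flow formulation buys you exactly what you flag as the technical heart: it never has to commit to a smooth, globally uniform choice of transverse direction $\eta(n)$, because the gradient \emph{is} one, for free.

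There is also a concrete point in your construction that needs fixing. You take $\eta(n)\in\Int(C)$ where $C$ is the cone of \eqref{eq:N-dec} generated by $\{m-w:m\in K_\kappa^n\}$ in $M_\bR$, and then use $\langle m-w,\eta(n)\rangle>0$. But $\eta(n)$ must live in $N_\bR$, and positivity of $\langle m-w,\eta(n)\rangle$ for all $m\in K_\kappa^n$ is the condition that $\eta(n)$ lie in the interior of the \emph{dual} cone $C^\vee\subset N_\bR$ (under the canonical $M\times N$ pairing), not in $C$ under some Euclidean identification --- the two coincide only when $C$ happens to be acute for that metric. With $C^\vee$ in place of $C$, your lower bound $\sum_{m\in K_\kappa^n}\tfrac{r_m}{r_w}t^{(\mu_m-\mu_w)(n)}\langle m-w,\eta(n)\rangle\gtrsim 1$ on $R_t$ does hold (the sum over $K_\kappa^n$ is $\geq 1/2-O(t^\kappa)$ on $R_t$), and indeed $\partial_sG_t|_{s=0}\gtrsim -\log t$, so one actually gets the improved bound $|s_t|=O(t^\kappa/(-\log t))$. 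Still, patching the cone-by-cone choices $\eta_C$ into a smooth map without losing the uniform lower bound across regions where $K_\kappa^n$ jumps is exactly the kind of bookkeeping the paper's gradient flow is designed to avoid; your proposal correctly identifies this as the hard part but does not carry it out, so as written the proof is incomplete at that step.
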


We define
\begin{align}
\widetilde{\Phi}_t \colon R_t \times D_\varepsilon \to N_\bC \times D_\varepsilon, \quad (n, x) \mapsto 
\lb \Phi_t(n, x)+\delta_t(n, x), x \rb
\end{align}
and set
\begin{align}\label{eq:Btw}
B_t^{w}&:=\lc n \in R_t \relmid \tilde{f}_t^w \lb i_t \lb n \rb \rb=1 \rc \\ \label{eq:Ctw}
C_t^w&:=i_t \circ \pi_1 \lb \widetilde{\Phi}_t \lb B_t^w \times \lc 0 \rc \rb \rb,
\end{align}
where $\pi_1 \colon N_\bC \times D_\varepsilon \to N_\bC$ is the first projection.
By \eqref{eq:xff} with $x=0$, we have $C_t^w \subset Z_t$.
In the limit $t \to +0$, the set $B_t^w$ converges to the boundary of the polytope $\nabla^w \in \scrP$.
The set $C_t^w$ is the cycle over which we integrate forms.

We further take a real number $\varepsilon_0$ such that $0 < \varepsilon_0 < \varepsilon$.
We set $S_{\varepsilon_0}^1:=\lc x \in \bC \relmid |x|=\varepsilon_0 \rc \subset D_\varepsilon$ and 
\begin{align}\label{eq:Swt}
S^w_t&:=\lc (n, x) \in R_t \times S_{\varepsilon_0}^1 \relmid \tilde{f}_t^w \lb i_t \lb n \rb \rb=|1+x| \rc\\
T^w_t&:=i_t \circ \pi_1 \lb \widetilde{\Phi}_t \lb S^w_t \rb \rb.
\end{align}
From \eqref{eq:xff} again, we can see that $T^w_t \subset N_{\bC^\ast} \setminus Z_t$ is a tube over the cycle $C_t^w$.
The rest of this section is devoted to prove \pref{pr:delta}.

\begin{proof}[Proof of \pref{pr:delta}]
We define a holomorphic function $g_t^w \colon N_\bC \times D_\varepsilon \to \bC$ by
\begin{align}
g_t^w \lb n, x \rb:=\frac{1}{1+x} f_t^w \lb i_t \lb n\rb \rb
=\frac{1}{1+x} \sum_{m \in A \setminus \lc w \rc} \lb - \frac{k_{m, t}}{k_{w, t}} \rb t^{\la m-w, n \ra},
\end{align}
and the function $\xi_t^w \colon N_{\kappa}^w \times D_\varepsilon \to \bC$ by
\begin{align}\label{eq:df-xi}
\xi_t^w \lb n, x \rb:=\frac{1}{|1+x|} \tilde{f}_t^w \lb i_t(n) \rb-g_t^w \lb \Phi_t (n, x), x \rb.
\end{align}

\begin{lemma}{\rm(cf.~\cite[Lemma 5.2]{MR4194298})}\label{lm:diff}
There is some constant $C >0$ such that for any $(n, x) \in N_\kappa^w \times D_\varepsilon$, one has
\begin{align}
\left| \xi_t^w (n, x) \right| \leq C \cdot t^\kappa.
\end{align}
\end{lemma}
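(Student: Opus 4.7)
The plan is to split the sum defining $\xi_t^w(n,x)$ according to whether $m \in K_\kappa^n$ or not, show that on $K_\kappa^n$ the phase-shifting condition \eqref{eq:phase-shift} makes the leading parts of $g_t^w(\Phi_t(n,x),x)$ and $\tilde f_t^w(i_t(n))/|1+x|$ cancel exactly, and then bound the two remainders separately. Throughout we use that $|1+x|$ is bounded above and below on $D_\varepsilon$ once $\varepsilon$ is small, and that $|A|$ is finite.

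First I would compute
\begin{align}
t^{\langle m-w,\Phi_t(n,x)\rangle} = t^{\langle m-w,n\rangle}\cdot e^{\sqrt{-1}\langle m-w,\phi(n,x)\rangle},
\end{align}
using $t^{\sqrt{-1}y/\log t}=e^{\sqrt{-1}y}$. Writing $k_{m,t}/k_{w,t}=(c_m/c_w)\,t^{\lambda_m-\lambda_w}\bigl(1+O(t^{1/n_m})\bigr)$ from the Puiseux expansion (where $n_m$ is the denominator in the series for $k_m$), the monomial of $g_t^w(\Phi_t(n,x),x)$ labeled by $m$ equals
\begin{align}
\frac{1}{1+x}\cdot\Bigl(-\frac{c_m}{c_w}\Bigr)\,t^{\mu_m(n)-\mu_w(n)}\,e^{\sqrt{-1}\langle m-w,\phi(n,x)\rangle}\bigl(1+O(t^{1/n_m})\bigr).
\end{align}

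Next, for $m\in K_\kappa^n$ I would plug in \eqref{eq:phase-shift}: using $-c_m/c_w=(r_m/r_w)e^{\sqrt{-1}\arg(-c_m/c_w)}$ and $e^{\sqrt{-1}\mathrm{Arg}(1+x)}=(1+x)/|1+x|$, one checks that
\begin{align}
\frac{1}{1+x}\cdot\Bigl(-\frac{c_m}{c_w}\Bigr)\,e^{\sqrt{-1}\langle m-w,\phi(n,x)\rangle}=\frac{1}{|1+x|}\cdot\frac{r_m}{r_w},
\end{align}
so the monomial above becomes the corresponding monomial of $\tilde f_t^w(i_t(n))/|1+x|$, up to the multiplicative error $1+O(t^{1/n_m})$. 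Since $n\in N_\kappa^w$ forces $\mu_m(n)-\mu_w(n)\geq -\kappa$ and condition (3) of \pref{lm:kappa} gives $1/n_m>2\kappa$, the error on this term is bounded by a constant times $t^{-\kappa}\cdot t^{1/n_m}=O(t^{1/n_m-\kappa})=O(t^\kappa)$.

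Finally, for $m\notin K_\kappa^n$ I have $\mu_m(n)-\mu_w(n)>\kappa$, so $t^{\mu_m(n)-\mu_w(n)}<t^\kappa$, and the corresponding terms in both $g_t^w(\Phi_t(n,x),x)$ and $\tilde f_t^w(i_t(n))/|1+x|$ are each bounded by a constant times $t^\kappa$. Summing the $O(t^\kappa)$ contribution from each $m\in A\setminus\{w\}$ (a finite set) and absorbing the uniform bounds on $|1+x|^{-1}$, $r_m/r_w$, and the constants from the Puiseux expansions into a single constant $C$ yields $|\xi_t^w(n,x)|\leq C\cdot t^\kappa$ uniformly over $N_\kappa^w\times D_\varepsilon$. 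The only genuinely delicate step is the cancellation in the second paragraph, which is exactly where the function $\phi$ was designed to do its job; once that is in place, the rest is routine estimation.
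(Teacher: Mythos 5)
Your proposal is correct and follows the same route as the paper's own proof: both split $A\setminus\{w\}$ into $K_\kappa^n$ and its complement, use the phase-shifting identity \eqref{eq:phase-shift} to make the leading Puiseux coefficients of $g_t^w(\Phi_t(n,x),x)$ and $\tilde f_t^w(i_t(n))/|1+x|$ cancel exactly on $K_\kappa^n$, control the Puiseux tail via condition (3) of \pref{lm:kappa} (your $O(t^{1/n_m})$ with $1/n_m>2\kappa$ is exactly the paper's $O(t^{2\kappa})$), and bound the $m\notin K_\kappa^n$ terms directly by $t^\kappa$. Your write-up is merely more explicit about the algebraic cancellation step, which the paper states in a single line.
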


\begin{proof}
By \eqref{eq:phase-shift}, one has
\begin{align}
g_t^w \lb \Phi_t (n, x), x \rb
&=\frac{1}{1+x} \sum_{m \in A \setminus \lc w \rc} \lb - \frac{k_{m, t}}{k_{w, t}} \rb t^{\la m-w, n+\sqrt{-1} \frac{\phi \lb n, x\rb}{\log t} \ra} \\
&=O \lb t^\kappa \rb+\frac{1}{\left| 1+x \right|} \sum_{m \in K_\kappa^n} \frac{r_m}{r_w} \lb 1+O(t^{2\kappa}) \rb t^{\lb \mu_m-\mu_w\rb(n)},
\end{align}
where we also used (3) of \pref{lm:kappa}.
One also has
\begin{align}
\frac{1}{|1+x|} \tilde{f}_t^w \lb i_t(n) \rb
&=\frac{1}{|1+x|} \sum_{m \in A \setminus \lc w \rc} \frac{r_m}{r_w} t^{\lb \mu_m-\mu_w\rb(n)}\\
&=O \lb t^\kappa \rb+\frac{1}{\left| 1+x \right|} \sum_{m \in K_\kappa^n} \frac{r_m}{r_w} t^{\lb \mu_m-\mu_w\rb(n)}.
\end{align}
By combining these, we get
\begin{align}
\xi_t^w \lb n, x \rb=O \lb t^\kappa \rb+\frac{1}{\left| 1+x \right|} \sum_{m \in K_\kappa^n} O(t^{2\kappa}) \frac{r_m}{r_w} t^{\lb \mu_m-\mu_w\rb(n)}.
\end{align}
Since we have $\min_{m \in A \setminus \lc w \rc} \mu_m(n) -\mu_w (n) \geq -\kappa$ for $n \in N_\kappa^w$, this is $O \lb t^\kappa \rb$. 
We obtained the claim.
\end{proof}

We consider the gradient vector field of $g_t^w \lb \bullet, x\rb$ with fixed $x \in D_\varepsilon$ on $N_\bC \times \lc x \rc$
\begin{align}
\grad g_t^w(n, x):=\lb \overline{\frac{\partial g_t^w}{\partial n_0}}, \cdots, \overline{\frac{\partial g_t^w}{\partial n_d}} \rb,
\end{align}
where $(n_0, \cdots, n_{d})$ are $\bC$-coordinates on $N_\bC \cong \bC^{d+1}$.

\begin{lemma}{\rm(cf.~\cite[Lemma 5.6]{MR4194298})}\label{lm:grad}
When $t>0$ is sufficiently small, one has $\left| \grad g_t^w (n, x) \right| \neq 0$ on $N_{2\kappa, \bC}^w \times D_\varepsilon$.
Furthermore, there exist constants $C_1, C_2 >0$ such that for any $(n, x) \in N_{2\kappa, \bC}^w \times D_\varepsilon$ satisfying $g_t^w \lb n, x \rb \neq 0$, we have
\begin{align}\label{eq:grad/g}
\frac{\left| \grad g_t^w (n, x) \right|}{\left| g_t^w (n, x) \right|}
\geq
(-\log t)\lb C_1-C_2 t^\kappa \rb.
\end{align}
\end{lemma}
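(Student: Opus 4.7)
The plan is to make the partial derivatives of $g_t^w$ explicit, reduce both $g_t^w$ and $\grad g_t^w$ to principal sums indexed by $L_\kappa^{n_R}$ (where $n_R := \Re \lb n \rb$), and then deduce the inequality from the linear independence furnished by the unimodularity of $\scrT$. Differentiating $t^{\la m-w, n \ra}$ with respect to $n_i$ produces a factor $\log t \cdot \la m-w, e_i \ra$, so
\begin{align}
\left| \grad g_t^w \lb n, x \rb \right| = \frac{\left| \log t \right|}{\left| 1+x \right|} \cdot \left| v_t^w \lb n, x \rb \right|, \quad
v_t^w \lb n, x \rb := \sum_{m \in A \setminus \lc w \rc} \lb -\frac{k_{m, t}}{k_{w, t}} \rb \lb m-w \rb \cdot t^{\la m-w, n \ra}.
\end{align}
Using the convergent Puiseux expansion $-k_{m, t}/k_{w, t} = -\lb c_m/c_w \rb t^{\lambda_m - \lambda_w} \lb 1 + O \lb t^{2\kappa} \rb \rb$, which is valid by part (3) of \pref{lm:kappa}, and setting $M := \min_{m \in A \setminus \lc w \rc} \lb \mu_m - \mu_w \rb \lb n_R \rb$, I would split the sums at $L_\kappa^{n_R}$: any $m \notin L_\kappa^{n_R}$ satisfies $\mu_m \lb n_R \rb - \mu_w \lb n_R \rb > M + \kappa$, so such terms contribute $O \lb t^{M+\kappa} \rb$ to both $g_t^w$ and $v_t^w$.

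Introducing the principal coefficients
\begin{align}
b_m \lb n, x \rb := -\frac{c_m}{c_w} \cdot \frac{t^{\mu_m \lb n_R \rb - \mu_w \lb n_R \rb}}{1+x} \cdot e^{\sqrt{-1} \la m-w, \Im(n) \ra \log t}, \quad m \in L_\kappa^{n_R},
\end{align}
the expansions become $g_t^w = \sum_{m \in L_\kappa^{n_R}} b_m + O \lb t^{M+\kappa} \rb$ and $v_t^w/\lb 1+x \rb = \sum_{m \in L_\kappa^{n_R}} \lb m-w \rb b_m + O \lb t^{M+\kappa} \rb$. By part (2) of \pref{lm:kappa} the set $\lc w \rc \cup L_\kappa^{n_R}$ spans a simplex of the unimodular triangulation $\scrT$, so $\lc m-w \relmid m \in L_\kappa^{n_R} \rc$ is part of a $\bZ$-basis of $M$ and in particular linearly independent. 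Since only finitely many subsets $L_\kappa^{n_R}$ arise as $n_R$ varies in $N_{2\kappa}^w$, one obtains a uniform constant $c_0 > 0$ such that $\left| \sum_m \lb m-w \rb \alpha_m \right| \geq c_0 \max_m \left| \alpha_m \right|$ for every admissible $L_\kappa^{n_R}$ and every tuple of complex coefficients $\lc \alpha_m \rc$.

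Letting $B := \max_{m \in L_\kappa^{n_R}} \left| b_m \right|$, the estimate $B \geq C_0 t^M$ holds uniformly because the maximum is attained at some $m$ with $\mu_m \lb n_R \rb - \mu_w \lb n_R \rb = M$, and both $\left| c_m/c_w \right|$ and $\left| 1+x \right|^{-1}$ are bounded away from zero on $N_{2\kappa, \bC}^w \times D_\varepsilon$. The principal expansions then yield
\begin{align}
\left| g_t^w \right| \leq \lb C_3 + C_4 t^\kappa \rb B, \quad \left| v_t^w \right|/\left| 1+x \right| \geq \lb c_0 - C_5 t^\kappa \rb B,
\end{align}
and dividing produces \eqref{eq:grad/g}. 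The non-vanishing of $\grad g_t^w$ on $N_{2\kappa, \bC}^w \times D_\varepsilon$ follows from the lower bound on $\left| v_t^w \right|$ alone, since $B > 0$ and $c_0 - C_5 t^\kappa > 0$ for $t$ sufficiently small. The main obstacle is securing the uniform constant $c_0$: this rests simultaneously on the finiteness of $\scrT$ and on each $\lc w \rc \cup L_\kappa^{n_R}$ sitting inside a single simplex of the \emph{unimodular} triangulation, which prevents the vectors $m-w$ from degenerating across configurations.
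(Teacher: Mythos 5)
Your proposal is correct and follows essentially the same strategy as the paper's proof: reduce to the principal sum over the index set $L_\kappa^{\Re(n)}$, invoke Lemma \ref{lm:kappa}(2) together with unimodularity of $\scrT$ to get linear independence of $\lc m-w \relmid m \in L_\kappa^{\Re(n)} \rc$, and then bound $\left| \grad g_t^w \right| / \left| g_t^w \right|$ by comparing both quantities against the dominant coefficient. The only cosmetic difference is that the paper normalizes by $t^{\tilde{\mu}_w(n)}$ and works with the $\ell^1$ sum $\sum_{m} \left| t^{(\mu_m-\mu_w-\tilde{\mu}_w)(n)} \right| \geq 1$, whereas you work with the $\ell^\infty$ quantity $B$ and the bound $B \geq C_0 t^M$; these are equivalent up to constants (one tiny wording caveat: the max $B$ need not be attained at an $m$ achieving the minimum $M$, but it is bounded below by $|b_{m_0}|$ for such an $m_0$, which suffices).
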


\begin{remark}\label{rm:kappa2}
\pref{lm:grad} can be shown in the same way as \cite[Lemma 5.6]{MR4194298}.
For the proof of \cite[Lemma 5.6]{MR4194298} (and the above lemma), we need the real part of the element $n$ to satisfy the condition (2) of \pref{lm:kappa}.
As mentioned in \pref{rm:kappa}, the condition (2) does not hold for all elements in $N_\bR$ but for all elements in $N_{2 \kappa}^w$ in general.
Therefore, we suppose $n \in N_{2\kappa, \bC}^w$ in \pref{lm:grad}, although it is wrongly claimed in \cite[Lemma 5.6]{MR4194298} that the claim holds for all elements $n$ in $N_\bC$.
\end{remark}

We consider the differential equation for an unknown function $c \colon R_t \times D_\varepsilon \times \ld 0, 2\rd \to N_\bC$
\begin{align}\label{eq:de}
\frac{d}{ds} c \lb n, x, s \rb=\xi_t^w \lb n, x \rb \cdot \frac{\grad g_t^w}{\left| \grad g_t^w \right|^2} \lb c \lb n, x, s \rb, x \rb
\end{align}
with the initial condition $c \lb n, x, 0 \rb=\Phi_t (n, x) \in N_{\kappa, \bC}^w$.
\pref{pr:delta} is proved by showing that there exists a global solution $c \lb n, x, s \rb \in N_{2 \kappa, \bC}^w$ for \eqref{eq:de}, and the map $\delta_t \colon R_t \times D_\varepsilon \to N_\bC$ defined by
\begin{align}
\delta_t (n, x):=c(n, x, 1)-\Phi_t (n, x)
\end{align}
satisfies the conditions (1) and (2) in \pref{pr:delta}.
These can be shown basically in the same way as done in \cite[Proposition 5.3]{MR4194298}.
However, in their proof, \cite[Lemma 5.6]{MR4194298} is used for the solution $c \lb n, x, s \rb$ (with $x=0$).
Since we actually  need to impose $n \in N_{2\kappa, \bC}^w$ in \cite[Lemma 5.6]{MR4194298} (and \pref{lm:grad}) as explained in \pref{rm:kappa2}, we need to modify their proof of \cite[Proposition 5.3]{MR4194298} so that we can also see that the solution $c \lb n, x, s \rb$ sits in $N_{2 \kappa, \bC}^w$.
We give the detail of how to do it in the following.

We fix $(n, x) \in R_t \times D_\varepsilon$.
Let $s(n, x) \in \ld 0, 2\rd$ be the supremum of $s' \in \ld 0, 2\rd$ such that there exists a solution $c(n, x, s)$ of \eqref{eq:de} on the interval $\ld 0, s' \rb$ and $c(n, x, s) \in N_{2\kappa, \bC}^w$ for all $s \in \ld 0, s' \rb$.
For any $s_0 \in \ld 0, s(n, x) \rb$, we have
\begin{align}
\int_0^{s_0} \overline{\grad g_t^w \lb c \lb n, x, s \rb, x \rb} \cdot \frac{d}{ds} c \lb n, x, s \rb ds=\int_0^{s_0} \xi_t^w \lb n, x \rb ds=s_0 \cdot \xi_t^w \lb n, x \rb.
\end{align}
This is also equal to $g_t^w \lb c \lb n, x, s_0 \rb, x \rb-g_t^w \lb c \lb n, x, 0 \rb, x \rb$.
Hence, we have
\begin{align}\label{eq:ggxi}
g_t^w \lb c \lb n, x, s_0 \rb, x \rb=g_t^w \lb \Phi_t (n, x), x \rb+s_0 \cdot \xi_t^w \lb n, x \rb.
\end{align}
By this and \eqref{eq:df-xi}, one can get
\begin{align}
\left| g_t^w \lb c \lb n, x, s_0 \rb, x \rb \right| &\geq \left| g_t^w \lb \Phi_t (n, x), x \rb \right|-s_0 \left| \xi_t^w (n, x) \right| \\
&\geq \frac{1}{|1+x|} \cdot \tilde{f}_t^w \lb i_t(n) \rb- \lb 1+s_0 \rb \left| \xi_t^w (n, x) \right| \\
&\geq \frac{1}{2|1+x|}-\lb 1+s_0 \rb C \cdot t^\kappa,
\end{align}
where we used \pref{lm:diff} and $1/2 \leq \tilde{f}_t^w \lb i_t(n) \rb$ for $n \in R_t$ in the last inequality.
This is greater than, for instance, $1/3$ when $\varepsilon, t >0$ are sufficiently small.
Thus by \pref{lm:grad}, we have
\begin{align}\label{eq:gradg}
\left| \grad g_t^w \lb c \lb n, x, s_0 \rb, x \rb \right| \geq (-\log t) \rho_0
\end{align}
with some constant $\rho_0 > 0$ for sufficiently small $t >0$.
Therefore, \eqref{eq:de} implies
\begin{align}
\left| \frac{d}{ds} c \lb n, x, s \rb \right| 
= \frac{\left| \xi_t^w \lb n, x \rb \right|}{\left| \grad g_t^w \lb c \lb n, x, s \rb, x \rb \right|}
\leq 
\frac{\left| \xi_t^w \lb n, x \rb \right|}{(-\log t) \rho_0}
\leq C \rho_0^{-1} (-\log t)^{-1} t^\kappa
\end{align}
for $s \in \ld 0, s(n, x) \rb$.
This implies the limit $\lim_{s \to s(n, x)-0} c(n, x, s)$ exists.
Suppose $s(n, x) <2$.
Then the solution for \eqref{eq:de} can be extended to a larger interval $\ld 0, s(n, x)+\varepsilon_1 \rb$ with some small $\varepsilon_1 >0$.
For any $s_0 \in \ld 0, s(n, x)+\varepsilon_1 \rb$, one has
\begin{align}\label{eq:cc}
\left| c(n, x, s_0)-c(n, x, 0) \right| \leq \int_0^{s_0}  \left| \frac{d}{ds} c \lb n, x, s \rb \right| ds \leq 2C \rho_0^{-1} (-\log t)^{-1} t^\kappa
\end{align}
Since $c \lb n, x, 0 \rb=\Phi_t (n, x) \in N_{\kappa, \bC}^w$, one can get
\begin{align}
\left| \lb \min_{m \in A \setminus \lc w \rc} \mu_m - \mu_w \rb \lb \Re \lb c(n, x, s_0 )\rb \rb \right|
&=\left| \lb \min_{m \in A \setminus \lc w \rc} \mu_m - \mu_w \rb 
\lb \Re \lb c(n, x, s_0)-c(n, x, 0)\rb +\Re \lb c(n, x, 0 )\rb \rb \right| \\
& \leq C' (-\log t)^{-1} t^\kappa  +\kappa \\
& \leq 2 \kappa
\end{align}
when $t >0$ is sufficiently small.
Here $C' >0$ is some constant.
Hence, one has $c \lb n, x, s_0 \rb \in N_{2\kappa, \bC}^w$ for any $s_0 \in \ld 0, s(n, x)+\varepsilon_1 \rb$.
This contradicts the original assumption on $s(n, x)$.
We conclude that $s(n, x)=2$ and the solution $c(n, x, s) \in N_{2\kappa, \bC}^w$ exists on the interval $\ld 0, 2\rd$.

The remaining claim to prove is that $\delta_t$ satisfies the conditions (1) and (2) in \pref{pr:delta}.
This can be proved in a similar manner to \cite[Proposition 5.3]{MR4194298}.
(In the proof, we use \eqref{eq:ggxi} with $s_0=1$.
Therefore, we need $1 \in \ld 0, s(n, x) \rb$.
This is the reason why we consider the differential equation \eqref{eq:de} on $R_t \times D_\varepsilon \times \ld 0, 2\rd$ rather than on $R_t \times D_\varepsilon \times \ld 0, 1\rd$.)
We conclude \pref{pr:delta}.
\end{proof}

\section{Proof of \pref{th:main1}}\label{sc:integral}

We keep the same assumptions and notation as in \pref{sc:main} and \pref{sc:sphere}.
We fix elements $w \in W$ and $v \in V_l$ $(l \geq 1)$.
We consider the asymptotics of the period 
\begin{align}
\int_{C_t^{w}} \Omega_t^{l, v} 
=\frac{1}{2 \pi \sqrt{-1}} \int_{T_t^{w}} \omega_t^{l, v}
= \frac{1}{2 \pi \sqrt{-1}} \int_{S_t^{w}} \widetilde{\Phi}_t^\ast \pi_1^\ast i_t^\ast \omega_t^{l, v}.
\end{align}
Following \cite{MR4194298}, we compute this by decomposing the domain $S_t^{w}$ into regions on which different monomials of $f_t^w$ are dominant among the monomials of $f_t^w$.
Let $\kappa>0$ be a small real number satisfying the conditions of \pref{lm:kappa}. 
We take a small constant $\epsilon >0$ so that $\epsilon <\kappa /2$.
For each pair of an element $q \in A \setminus \lc w \rc$ and a subset $K \subset A \setminus \lc w, q \rc$, we set
\begin{align}
S_t^{w, q, K}:=\lc (n, x) \in S_t^{w} \relmid 
\begin{array}{l}
\mu_k (n) -\mu_{q}(n) \in \ld 0 , \epsilon \rd, \forall k \in \lc q \rc \sqcup K \\
\mu_m (n) - \mu_q(n) \geq \epsilon, \forall m \in A \setminus \lb \lc w, q \rc \sqcup K \rb
\end{array}
\rc.
\end{align}
The subset $S_t^{w, q, K}$ is the region where $\mu_{q}-\mu_w$ is dominant and $\mu_k-\mu_w$ $(k \in K)$ are also nearly dominant among $\mu_m-\mu_w$ $(m \in A \setminus \lc w \rc)$.
When $\epsilon >0$ is sufficiently small, one has $S_t^{w, q, K} \neq \emptyset$ if and only if the convex hull of $\lc w, q \rc \sqcup K$ is contained in the triangulation $\scrT$ of $\Delta$.
We replace $\epsilon$ with a smaller one if necessary so that this holds.
One has
\begin{align}\label{eq:int-decomp}
\int_{C_t^{w}} \Omega_t^{l, v} 
=\frac{1}{2 \pi \sqrt{-1}} \sum_{q, K} \int_{S_t^{w, q, K}} \widetilde{\Phi}_t^\ast \pi^\ast_1 i_t^\ast \omega_t^{l, v},
\end{align}
where the sum is taken over $\lc q \rc \sqcup K \subset A \setminus \lc w \rc$ such that the convex hull of $\lc w, q \rc \sqcup K$ is contained in $\scrT$.

\begin{example}\label{eg:d=2}
Let $d=2$, and fix a basis $\lc e_1, e_2, e_3 \rc$ of the lattice $M \cong \bZ^3$.
Consider the polynomial
\begin{align}\label{eq:ex-poly}
f:=\sum_{m \in \Delta \cap M} c_m x^{\lambda_m} z^m
\end{align}
with $c_m \in \bC^\ast$ $(m \in \Delta \cap M)$ and 
\begin{align}
\Delta:=\conv \lb \lc 2e_1, -e_1, e_2, -e_2, e_3, -e_3 \rc \rb \subset M_\bR, \quad
\lambda_m:=
\left\{ \begin{array}{ll}
3 & m=2e_1 \\
0 & m=0 \\
1 & m \in \lb \Delta \cap M \rb \setminus \lc 0, 2e_1 \rc. 
\end{array} 
\right. 
\end{align}
The tropicalization $V(\trop (f)) \subset N_\bR$ is shown on the left in \pref{fg:surface}.
For an arbitrary element $x_0 \in S_{\varepsilon_0}^1$, the subset $S_t^{w} \cap \lc x=x_0 \rc \subset N_\bR$ with $w=0$ converges to the boundary $\partial \nabla^{w=0}$ of the cube $\nabla^{w=0}$ in $V(\trop (f))$ as $t \to +0$.
Under the identification $S_t^{w=0} \cap \lc x=x_0 \rc \cong \partial \nabla^{w=0}$, the right figure of \pref{fg:surface} shows the slice by $\lc x=x_0 \rc$ of the decomposition $\lc S_t^{w=0, q, K} \relmid q, K \rc$ of $S_t^{w=0}$.
For instance, the red (resp. blue, green) region shows the intersection of $S_t^{0, e_1, \lc e_2, e_3\rc}$ (resp. $S_t^{0, e_1, \lc e_2 \rc}$ $S_t^{0, e_1, \emptyset}$) with $\lc x=x_0 \rc$.

\begin{figure}[htbp]
\begin{center}
\includegraphics[scale=0.55]{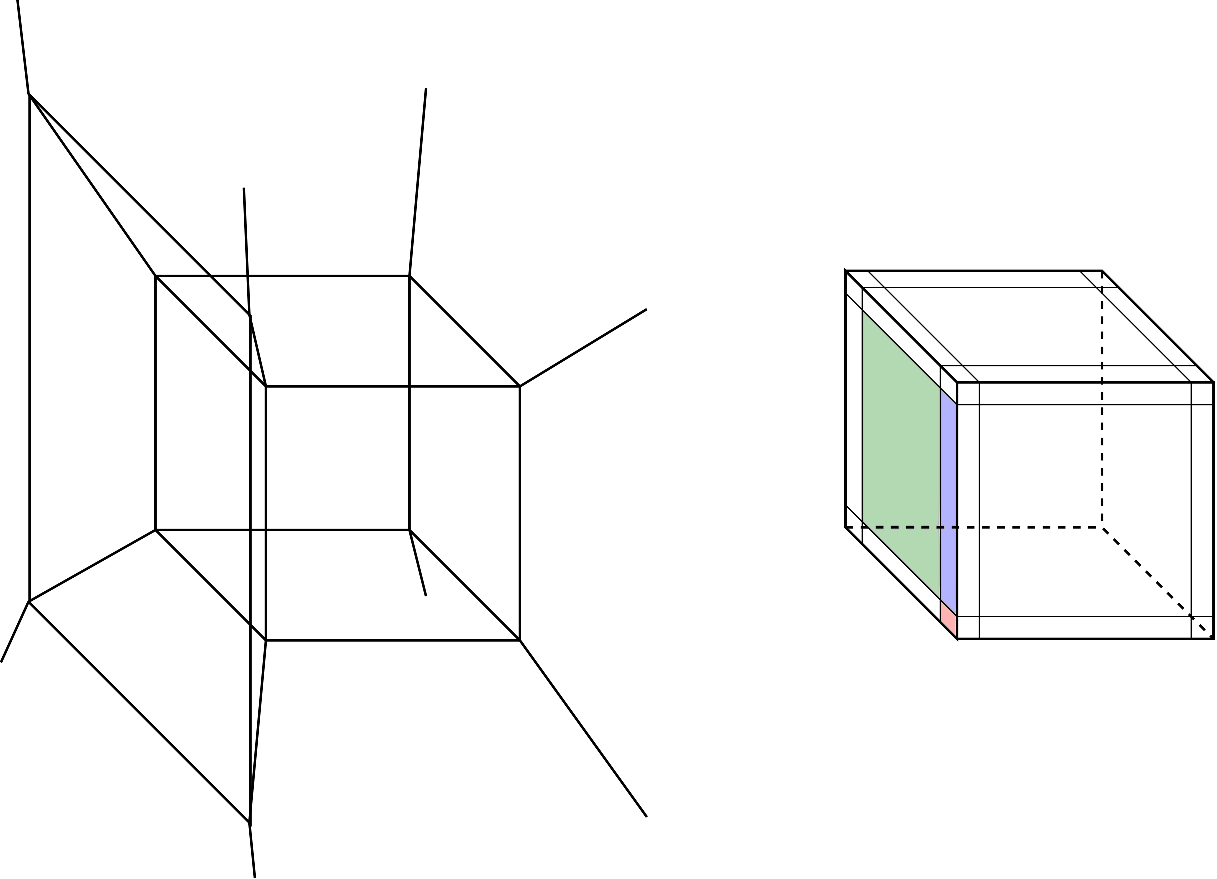}
\end{center}
\caption{The tropicalization $V(\trop (f))$ and the regions $S_t^{0, q, K} \cap \lc x=x_0 \rc$}
\label{fg:surface}
\end{figure}

\end{example}

The outline of the proof of \pref{th:main1} is as follows:
First, for each region $S_t^{w, q, K}$, we simplify the defining equation $1+x=f_t^w$ by throwing away monomials that are not dominant on $S_t^{w, q, K}$, i.e., monomials that correspond to the elements in $A \setminus \lb \lc w, q \rc \cup K \rb$ to compute the local integral $\int_{S_t^{w, q, K}} \widetilde{\Phi}_t^\ast \pi^\ast_1 i_t^\ast \omega_t^{l, v}$ approximately (\pref{sc:local-integral}).
We proceed the local computation by using the Duistermaat--Heckman theorem and a fact on the complex volumes of polytopes introduced by \cite{MR4194298}, after replacing the integration regions with some approximating polytopes (\pref{sc:further}).
Then we take the sum of the local integrals to obtain the total value of the period integral (\pref{sc:sum}).
We will get the formula
\begin{align}\label{eq:int-G}
\int_{C_t^{w}} \Omega_t^{l, v}=
(-1)^{d+p_w}
\int_{Y_{w}} 
t^{-\omega_{\lambda}^{w}}
\exp \lb -\sqrt{-1} \sum_{m \in A_{w}} 
\arg \lb -\frac{c_m}{c_w}\rb D_m^w \rb
\binom{\sigma^w+l-p_w-1}{l-1}
\widehat{G}+O \lb t^\epsilon \rb,
\end{align}
where $Y_{w}, D_m^w, \omega_{\lambda}^{w}, \sigma^w$ are the ones defined in \pref{sc:main}, and the cohomology class $\widehat{G}$ will be defined in \eqref{eq:hG}.
\pref{th:main1} for the case where $\conv \lb \lc w \rc \cup \tau_v \rb \nin \scrT$ immediately follows from this as we will see in the end of \pref{sc:sum}.
In order to show \pref{th:main1} for the case where $\conv \lb \lc w \rc \cup \tau_v \rb \in \scrT$, we will rewrite the formula \eqref{eq:int-G} in terms of the gamma function using the Dirichlet integral (\pref{sc:re-gamma}).

 \subsection{Local computation of the period integral}\label{sc:local-integral}

For a pair $\lb q, K \rb$, we define the integral affine functions $\alpha, \beta_k \colon N_\bC \to \bC$ $(k \in K)$ as
\begin{align}\label{eq:abg}
\alpha:=\mu_q-\mu_{w}, \quad \beta_k:=\mu_k-\mu_q.
\end{align}
Since the triangulation $\scrT$ is unimodular, we can take a collection of integral linear functions $\lc \gamma_j \rc_{j \in J}$ so that $\lc \alpha, \beta_k, \gamma_j \relmid k \in K, j \in J \rc$ forms an integral affine coordinate system on $N_\bC$.
We will write the integrand $\widetilde{\Phi}_t^\ast \pi^\ast_1 i_t^\ast \omega_t^{l, v}$ of the integral $\int_{S_t^{w, q, K}} \widetilde{\Phi}_t^\ast \pi^\ast_1 i_t^\ast \omega_t^{l, v}$ in terms of this integral affine coordinate system to compute the integral.

\begin{lemma}\label{lm:a-range}
When $t>0$ is sufficiently small, one has
\begin{align}
\log_t C_1 \leq \alpha \leq \log_t C_2
\end{align}
on $S_t^{w, q, K}$ for some constants $C_1, C_2>0$.
In particular, we have $-\epsilon \leq \alpha \leq \epsilon$, when $t>0$ is sufficiently small.
\end{lemma}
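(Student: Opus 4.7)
The plan is to exploit the defining inequalities of $S_t^{w,q,K}$ together with the constraint $\tilde{f}_t^w(i_t(n)) = |1+x|$ to sandwich $t^\alpha$ between two positive constants, then take $\log_t$.

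First, I would observe that on $S_t^{w,q,K}$ the function $\alpha = \mu_q - \mu_w$ realizes the minimum $\min_{m \in A \setminus \{w\}} (\mu_m - \mu_w)(n)$. Indeed, for $k \in K$ we have $(\mu_k - \mu_w)(n) = \alpha(n) + (\mu_k - \mu_q)(n) \geq \alpha(n)$ since $(\mu_k - \mu_q)(n) \in [0, \epsilon]$, and for $m \in A \setminus (\{w,q\} \sqcup K)$ we have $(\mu_m - \mu_w)(n) \geq \alpha(n) + \epsilon \geq \alpha(n)$.

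Next, I would factor $t^\alpha$ out of $\tilde{f}_t^w(i_t(n))$:
\begin{align}
\tilde{f}_t^w(i_t(n)) = t^{\alpha(n)} \cdot \sum_{m \in A \setminus \{w\}} \frac{r_m}{r_w} \, t^{(\mu_m - \mu_w)(n) - \alpha(n)}.
\end{align}
By the first step every exponent in the sum is nonnegative, and since $0<t<1$ each summand is bounded above by $r_m/r_w$. The term $m=q$ equals exactly $r_q/r_w$, so the sum lies in the interval $[r_q/r_w, \sum_{m \neq w} r_m/r_w]$, which is a fixed positive interval independent of $n$ and $t$. On the other hand, $(n,x) \in S_t^w$ forces $\tilde{f}_t^w(i_t(n)) = |1+x|$, and with $x \in S^1_{\varepsilon_0}$ this quantity lies in $[1-\varepsilon_0, 1+\varepsilon_0]$. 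Combining the two bounds yields
\begin{align}
\frac{1-\varepsilon_0}{\sum_{m \neq w} r_m/r_w} \ \leq \ t^{\alpha(n)} \ \leq \ \frac{(1+\varepsilon_0)\, r_w}{r_q},
\end{align}
and taking $\log_t$ (which reverses inequalities since $\log t < 0$) gives constants $C_1, C_2 > 0$ for which $\log_t C_1 \leq \alpha(n) \leq \log_t C_2$ on $S_t^{w,q,K}$.

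For the final sentence, as $t \to +0$ we have $\log t \to -\infty$, so $\log_t C = (\log C)/(\log t) \to 0$ for any fixed $C > 0$; hence both bounds are in $[-\epsilon, \epsilon]$ for all sufficiently small $t$. No step here looks particularly delicate; the only mild care is in correctly identifying that the minimum of $\mu_m - \mu_w$ is achieved by $q$ on this stratum, which is built directly into the definition of $S_t^{w,q,K}$.
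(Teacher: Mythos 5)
Your proof is correct and follows essentially the same route as the paper's: bound $\tilde{f}_t^w(i_t(n)) = |1+x|$ above and below by constant multiples of $t^\alpha$ using the defining inequalities of $S_t^{w,q,K}$ (so that $q$ realizes the minimum of $\mu_m - \mu_w$), then apply $\log_t$. The only cosmetic difference is that you use the sharper interval $[1-\varepsilon_0, 1+\varepsilon_0]$ for $|1+x|$ while the paper uses the looser $[1/2, 2]$, which changes nothing.
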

\begin{proof}
On $S_t^{w, q, K}$, we have 
\begin{align}
\frac{r_q}{r_{w}} t^{(\mu_q-\mu_{w})(n)} \leq 
\sum_{m \in A \setminus \lc w \rc} \frac{r_m}{r_{w}} t^{(\mu_m-\mu_{w})(n)}=|1+x| \leq 2,
\end{align}
from which we get the former inequality.
We also have
\begin{align}
\frac{1}{2} \leq |1+x| =\sum_{m \in A \setminus \lc w \rc} \frac{r_m}{r_{w}} t^{(\mu_m-\mu_{w})(n)} 
\leq C \cdot t^{(\mu_q-\mu_{w})(n)}
\end{align}
for $C:=\left| A \setminus \lc w \rc \right| \cdot \max_{m\in A \setminus \lc w \rc} r_m/r_w>0$.
The latter inequality follows from this.
\end{proof}

On $S_t^{w, q, K}$, we have
\begin{align}
|1+x |&=\sum_{m \in A \setminus \lc w \rc} \frac{r_m}{r_{w}} t^{(\mu_m-\mu_{w})(n)} \\ \label{eq:1xta}
&= t^{\alpha} \lb \frac{r_q}{r_{w}}+ \sum_{k \in K} \frac{r_k}{r_w} t^{\beta_k} +
\sum_{m \in A \setminus \lb \lc w, q \rc \sqcup K \rb} \frac{r_m}{r_w} t^{\mu_m-\mu_q} 
\rb.
\end{align}
The last term $\sum_{m \in A \setminus \lb \lc w, q \rc \sqcup K \rb} \frac{r_m}{r_w} t^{\mu_m-\mu_q}$ is $O \lb t^\epsilon \rb$, and we can see by the implicit function theorem that this equation can be used to write $\alpha$ as a function $\alpha_{w, q, K} \lb \beta, \gamma, x \rb$ of the variables $\beta:=\lc \beta_k \rc_k, \gamma:=\lc \gamma_j \rc_j$, and $x$.
We set
\begin{align}\label{eq:alpha'}
\alpha_{w, q, K}' \lb \beta, x \rb:=\log_t |1+x | - \log_t \lb \frac{r_q}{r_{w}} + \sum_{k \in K} \frac{r_k}{r_{w}} t^{\beta_k} \rb.
\end{align}

\begin{lemma}\label{lm:ac}
On $S_t^{w, q, K}$, one has
\begin{align}\label{eq:aa'}
\alpha_{w, q, K} \lb \beta, \gamma, x \rb&=\alpha_{w, q, K}' \lb \beta, x \rb+O \lb t^\epsilon \rb, \\ \label{eq:ag}
\frac{\partial \alpha_{w, q, K}}{\partial \gamma_j} \lb \beta, \gamma, x \rb&=O \lb t^{\epsilon} \rb, \\ \label{eq:Pb}
\Phi_t^\ast \beta_k&=\beta_k + \frac{\sqrt{-1}}{\log t} \cdot \lb \arg \lb -\frac{c_q}{c_w} \rb- \arg \lb -\frac{c_k}{c_w} \rb \rb,
\end{align}
where $\Phi_t \colon N_\kappa^w \times D_\varepsilon \to N_{\kappa, \bC}^w$ is the map defined in \eqref{eq:Phi}.
\end{lemma}

There are similar formulas in \cite[Section 3.4, Section 5.4]{MR4194298}, and the above lemma can be checked in the same way.

The standard volume form on $N_\bR$ is given by $d\alpha \wedge \bigwedge_{k \in K} d\beta_k \wedge \bigwedge_{j \in J} d\gamma_j$.
We set $d \mathrm{vol}_{q, K}:=\bigwedge_{j \in J} d\gamma_j$.
Then we have
\begin{align}
\begin{split}
\widetilde{\Phi}_t^\ast \pi^\ast_1 i_t^\ast \omega_t^{l, v}
&=\widetilde{\Phi}_t^\ast \pi^\ast_1 i_t^\ast \lb \lb \bigwedge_{i=0}^{d} \frac{dz_i}{z_i} \rb
\frac{1}{\lc -k_{w, t} z^w \lb f_t^w-1 \rb \rc^l}
\prod_{m \in A \cap \tau_v} \lb k_{m, t} \cdot z^m \rb^{p_m} \rb \\
&=\lb \log t \rb^{d+1} \frac{1}{x^l} \cdot
\widetilde{\Phi}_t^\ast \lb \prod_{m \in A \cap \tau_v} \lb -\frac{k_{m, t}}{k_{w, t}} \rb^{p_m} \cdot t^{v-lw} \cdot d\alpha \bigwedge_{k \in K} d\beta_k \cdot d \mathrm{vol}_{q, K} \rb\\ \label{eq:integrand}
&=\lb \log t \rb^{d+1} \frac{t^{\lambda_{l, v}-l \lambda_w}}{x^l} \cdot
\widetilde{\Phi}_t^\ast \lb \prod_{m \in A \cap \tau_v} \lb 1+O(t^{2\kappa}) \rb \lb -\frac{c_{m}}{c_{w}} \rb^{p_m} \cdot t^{v-lw} \cdot d\alpha \bigwedge_{k \in K} d\beta_k \cdot d \mathrm{vol}_{q, K} \rb,
\end{split}
\end{align}
where $t^{v-lw}$ denotes the function $t^{\la v-lw, \bullet \ra}$ on $N_\bC$, and $\lambda_{l, v}:= \sum_{m \in A \cap \tau_v} p_m \lambda_m$.
On the other hand, we can write
\begin{align}\label{eq:xfw}
1+x=f_t^{w} \lb i_t \lb n \rb \rb=
-\frac{c_q}{c_{w}} t^{\alpha}
\lb 
1+
\sum_{k \in K}  
\frac{c_k}{c_q} t^{\beta_k}
+
h_t(n)
\rb
\end{align}
on $\widetilde{\Phi}_t \lb S_t^{w, q, K} \rb$ with a function $h_t \colon N_\bC \to \bC$.
It follows from the $C^0$-estimate for $\delta_t$ in \pref{pr:delta} that the function $h_t$ satisfies the uniform estimates
\begin{align}\label{eq:h-est}
h_t=O \lb t^\epsilon \rb, \quad \frac{\partial h_t}{\partial n_i}= O \lb \lb - \log t \rb t^\epsilon \rb,
\end{align}
where $(n_0, \cdots, n_{d})$ are $\bC$-coordinates on $N_\bC \cong \bC^{d+1}$.
By \eqref{eq:xfw}, we can also write
\begin{align}
\alpha=\log_t \lc -\frac{c_w}{c_{q}} (1+x)
\lb 
1+
\sum_{k \in K}  
\frac{c_k}{c_q} t^{\beta_k}
+
h_t(n)
\rb^{-1} \rc.
\end{align}
By using this, one can get
\begin{align}
d\alpha \bigwedge_{k \in K} d\beta_k \cdot d \mathrm{vol}_{q, K}
&=
\lb 
1+
\lb 1+\sum_{k \in K} \frac{c_k}{c_q} t^{\beta_k}+h_t(n) \rb^{-1}
\frac{\partial h_t}{\partial \alpha}
\frac{1}{\log t}
\rb^{-1}
\frac{1}{1+x} \frac{dx}{\log t} \bigwedge_{k \in K} d\beta_k \cdot d \mathrm{vol}_{q, K} \\
&=
\lb 
1+
t^\alpha \lb -\frac{c_q}{c_w} \rb (1+x)^{-1}
\frac{\partial h_t}{\partial \alpha}
\frac{1}{\log t}
\rb^{-1}
\frac{1}{1+x} \frac{dx}{\log t} \bigwedge_{k \in K} d\beta_k \cdot d \mathrm{vol}_{q, K}
\end{align}
on $\widetilde{\Phi}_t \lb S_t^{w, q, K} \rb$. 
From this, \eqref{eq:h-est}, \pref{lm:a-range} and the $C^0$-estimate for $\delta_t$, we obtain
\begin{align}
d\alpha \bigwedge_{k \in K} d\beta_k \cdot d \mathrm{vol}_{q, K}
=
\lb 1+ O \lb t^{\epsilon} \rb \rb \frac{1}{1+x} \frac{dx}{\log t} \bigwedge_{k \in K} d\beta_k \cdot d \mathrm{vol}_{q, K}.
\end{align}
By this and \eqref{eq:integrand}, we have
\begin{align}\label{eq:int}
\int_{S_t^{w, q, K}} \widetilde{\Phi}_t^\ast \pi^\ast_1 i_t^\ast \omega_t^{l, v}
=\lb 1+ O \lb t^{\epsilon} \rb \rb \lb \log t \rb^{d}
\int_{S_t^{w, q, K}} 
\frac{dx}{x^l (1+x)}
P_t^{v, w}(n, x) \cdot 
\widetilde{\Phi}_t^\ast \lb
\bigwedge_{k \in K} d\beta_k \cdot d \mathrm{vol}_{q, K} \rb,
\end{align}
where 
\begin{align}
P_t^{v, w}(n, x):=t^{\lambda_{l, v}-l \lambda_w} \cdot \widetilde{\Phi}_t^\ast \lb t^{v-lw} \rb
\prod_{m \in A \cap \tau_v} \lb -\frac{c_m}{c_w} \rb^{p_m}.
\end{align}

\begin{lemma}\label{lm:Ppi}
Let $\tau_{w, q, K} \in \scrT$ be the convex hull of $\lc w, q \rc \sqcup K$.
Then we have
\begin{align}\label{eq:Pt-vw}
P_t^{v, w}(n, x)=
\left\{ \begin{array}{ll}
\lb 1+ O \lb t^\epsilon \rb \rb 
\cdot (-1)^{p_w}
\cdot
(1+x)^{l-p_w} \cdot
\frac{r_q^{p_q} \prod_{k \in \tau_v \cap K} \lb r_k t^{\beta_k} \rb^{p_k}}{\lb r_q+\sum_{k \in K} r_k t^{\beta_k}\rb^{l-p_w} }
& \tau_v \subset \tau_{w, q, K} \\
O \lb t^{\epsilon} \rb
& \tau_v \not\subset \tau_{w, q, K}
\end{array} 
\right.
\end{align}
on $S_t^{w, q, K}$, where $p_w:=0$ if $w \nin \tau_v$, and $p_q:=0$ if $q \nin \tau_v$.
\end{lemma}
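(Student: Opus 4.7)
The plan is to first rewrite the pullback $t^{\lambda_{l,v} - l\lambda_w} \widetilde{\Phi}_t^\ast \lb t^{v-lw} \rb$ as a product of $t$--exponentials in the integral affine coordinates $\lb \alpha, \beta, \gamma \rb$, then use \eqref{eq:xfw} to eliminate $\alpha$ in favor of $\lb 1+x \rb$ and the $t^{\beta_k}$, and finally use \pref{lm:Pb} together with the $C^1$--bound from \pref{pr:delta} to strip off the phase factors and turn each $\lb c_k/c_q \rb t^{\beta_k(c)}$ into the positive real expression $\lb r_k/r_q \rb t^{\beta_k(n)}$. As a starting point I would note that $\sum_{m \in A \cap \tau_v} p_m = l$ and $v-lw = \sum_m p_m \lb m-w \rb$, so
\begin{align*}
t^{\lambda_{l,v}-l\lambda_w} \cdot \widetilde{\Phi}_t^\ast \lb t^{v-lw} \rb = t^{\sum_{m \in A \cap \tau_v} p_m \lb \mu_m-\mu_w \rb(c)},
\end{align*}
where $c:=\Phi_t(n,x) + \delta_t(n,x) \in N_\bC$ and each $\mu_m$ is extended $\bC$--linearly. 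For $m \neq w$ I would split $\mu_m - \mu_w = \alpha + \lb \mu_m - \mu_q \rb$; this reduces to $\alpha$ when $m=q$, to $\alpha+\beta_m$ when $m \in K$, and otherwise its real part is $\geq \epsilon - O \lb t^\kappa \rb$ on $S_t^{w,q,K}$.

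For the case $\tau_v \subset \tau_{w,q,K}$, every element of $A \cap \tau_v$ lies in $\lc w, q \rc \sqcup K$, so the exponent reduces cleanly to $\lb l-p_w \rb \alpha(c) + \sum_{k \in K \cap \tau_v} p_k \beta_k(c)$, using $p_q + \sum_{k \in K \cap \tau_v} p_k = l - p_w$. Raising \eqref{eq:xfw} to the $\lb l-p_w \rb$--th power gives $t^{\lb l-p_w \rb \alpha(c)}$ as a product of $\lb -c_w/c_q \rb^{l-p_w}$, $\lb 1+x \rb^{l-p_w}$, and $\lb 1+\sum_{k \in K} \lb c_k/c_q \rb t^{\beta_k(c)}+h_t \rb^{-(l-p_w)}$. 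The crucial algebraic step is the identity
\begin{align*}
\frac{c_k}{c_q} t^{\beta_k(c)} = \frac{r_k}{r_q} t^{\beta_k(n)} \lb 1+O \lb t^\epsilon \rb \rb,
\end{align*}
which follows by exponentiating \pref{lm:Pb}---the imaginary shift in $\Phi_t^\ast \beta_k$ exponentiates to exactly the unit complex number $\lb c_q/c_k \rb \lb r_k/r_q \rb$---and then absorbing the perturbation due to $\delta_t$ into the error via $\left| \left| \delta_t \right| \right|_{C^1} = O \lb t^\kappa \rb$ and the fact that $\lb \log t \rb t^\kappa = O \lb t^\epsilon \rb$ since $\epsilon < \kappa/2$. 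Using also $h_t = O \lb t^\epsilon \rb$, the denominator becomes $\lb r_q + \sum_{k \in K} r_k t^{\beta_k} \rb^{l-p_w} / r_q^{l-p_w}$ up to a $\lb 1+O \lb t^\epsilon \rb \rb$ factor, and $\prod t^{p_k \beta_k(c)}$ yields $\prod \lb \lb c_q/c_k \rb \lb r_k/r_q \rb t^{\beta_k(n)} \rb^{p_k}$. I would then telescope the complex coefficients: writing $\lb -c_w/c_q \rb^{l-p_w} = \lb -c_w/c_q \rb^{p_q} \prod_{k \in K \cap \tau_v} \lb -c_w/c_q \rb^{p_k}$ and using $\lb -c_q/c_w \rb^{p_q} \lb -c_w/c_q \rb^{p_q} = 1$ and $\lb -c_k/c_w \rb^{p_k} \lb -c_w/c_q \rb^{p_k} \lb c_q/c_k \rb^{p_k} = 1$, the full prefactor $\prod_{m \in A \cap \tau_v} \lb -c_m/c_w \rb^{p_m}$ collapses to $\lb -1 \rb^{p_w} \prod_{k \in K \cap \tau_v} \lb r_k/r_q \rb^{p_k}$, which combines with $r_q^{l-p_w-\sum p_k} = r_q^{p_q}$ to produce the first line of \eqref{eq:Pt-vw}.

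For the case $\tau_v \not\subset \tau_{w,q,K}$, I would pick some $m^\ast \in \lb A \cap \tau_v \rb \setminus \lb \lc w, q \rc \sqcup K \rb$; by definition of $S_t^{w,q,K}$ one has $\lb \mu_{m^\ast} - \mu_q \rb(n) \geq \epsilon$, hence $\Re \lb \mu_{m^\ast} - \mu_q \rb(c) \geq \epsilon/2$ for small $t$. The exponent now acquires an extra summand $\sum_{m^\ast} p_{m^\ast} \lb \mu_{m^\ast} - \mu_q \rb(c)$. After the same substitution as in Case 1 the remaining part $t^{\lb l-p_w \rb \alpha(c) + \sum p_k \beta_k(c)}$ has absolute value $O(1)$ (since $r_q + \sum r_k t^{\beta_k} \geq r_q > 0$ and each $\beta_k(n) \in \ld 0, \epsilon \rd$), while the extra factor is bounded in absolute value by $t^{p_{m^\ast} \epsilon / 2}$, yielding the desired $O \lb t^\epsilon \rb$ estimate after possibly shrinking $\epsilon$.

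The hard part will be the phase cancellation in the first case: the identity $\lb c_k/c_q \rb t^{\beta_k(c)} = \lb r_k/r_q \rb t^{\beta_k(n)} \lb 1+O \lb t^\epsilon \rb \rb$ is precisely why the phase-shifting function $\phi$ was tailored to satisfy \eqref{eq:phase-shift}, and tracking how every complex argument cancels across the product requires careful combinatorial bookkeeping based on $l-p_w = p_q + \sum_{k \in K \cap \tau_v} p_k$. Everything else amounts to routine control of the error terms coming from $\delta_t$ and $h_t$.
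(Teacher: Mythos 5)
Your proposal is correct and follows essentially the same route as the paper: both decompose the exponent as $(l-p_w)\alpha + \sum_{k \in K \cap \tau_v} p_k \beta_k$, use \eqref{eq:xfw} to trade $t^{(l-p_w)\alpha}$ for $(1+x)^{l-p_w}$ and the $t^{\beta_k}$, absorb the $\delta_t$- and $h_t$-perturbations into a $(1+O(t^\epsilon))$ factor via \eqref{eq:t-delta}/\eqref{eq:h-est}, and exponentiate \pref{lm:Pb} so that the phase factors telescope against $\prod_m (-c_m/c_w)^{p_m}$, with the same treatment of the second case by isolating an $m_0 \in A\cap\tau_v$ outside $\tau_{w,q,K}$ whose exponent contributes a $t^{p_{m_0}\epsilon}$ factor while the rest remain bounded by \pref{lm:a-range}.
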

\begin{proof}
First, suppose $\tau_v \subset \tau_{w, q, K}$.
Since we have
\begin{align}
\lambda_{l, v}-l \lambda_w+v-lw
&=\sum_{m \in A \cap \tau_v} p_m \lc \lb \lambda_m-\lambda_w \rb+(m-w) \rc\\
&=p_q \alpha+\sum_{k \in K \cap \tau_v} p_k \lb \alpha+\beta_k \rb\\
&=(l-p_w) \alpha+\sum_{k \in K \cap \tau_v} p_k \beta_k,
\end{align}
one can see from \eqref{eq:xfw} that we have
\begin{align}
\begin{split}
t^{\lambda_{l, v}-l \lambda_w} \cdot \widetilde{\Phi}_t^\ast \lb t^{v-lw} \rb
&=
\widetilde{\Phi}_t^\ast \lb
t^{\lb l-p_w \rb \alpha+\sum_{k \in \tau_v \cap K} p_k \beta_k} \rb \\ \label{eq:Pt-vw1}
&=
\widetilde{\Phi}_t^\ast \lb
\lb -\frac{c_w}{c_q} \rb^{l-p_w}
\lb 
1+
\sum_{k \in K}  
\frac{c_k}{c_q} t^{\beta_k}
+
h_t(n)
\rb^{-l+p_w}
\lb 1+x \rb^{l -p_w}
t^{\sum_{k \in \tau_v \cap K} p_k \beta_k} \rb.
\end{split}
\end{align}
By the estimate 
\begin{align}\label{eq:est-log}
-\log t=O \lb t^{-\delta} \rb
\end{align}
that holds for any $\delta >0$, and the $C^0$-estimate for $\delta_t$ in \pref{pr:delta}, we also have 
\begin{align}\label{eq:t-delta}
\left| t^{\la m, \delta_t \ra} \right| \leq t^{Ct^\kappa}=\exp \lb C t^\kappa \log t \rb
=1+O\lb t^\kappa \log t \rb=1+O \lb t^\epsilon \rb
\end{align}
for $m \in M_\bR$, where $C>0$ is some constant.
By \eqref{eq:Pb}, we also have $t^{\Phi_t^\ast \lb \beta_k \rb}=O \lb 1 \rb$.
From this, \eqref{eq:t-delta}, and \eqref{eq:h-est}, we can see that the above \eqref{eq:Pt-vw1} is equal to
\begin{align}
\begin{split}
\lb -\frac{c_w}{c_q} \rb^{l-p_w}
\lb 
1+
\sum_{k \in K}  
\frac{c_k}{c_q} t^{\Phi_t^\ast (\beta_k)} \lb 1+O \lb t^\epsilon \rb \rb
+
O \lb t^\epsilon \rb
\rb^{-l+p_w}
\lb 1+x \rb^{l -p_w}
t^{\sum_{k \in \tau_v \cap K} p_k \beta_k} \\
=
\lb 1+ O \lb t^\epsilon \rb \rb \cdot 
\lb -\frac{c_w}{c_q} \rb^{l-p_w}
\lb 
1+
\sum_{k \in K}  
\frac{c_k}{c_q} t^{\Phi_t^\ast \lb \beta_k \rb} \rb^{-l+p_w}
\lb 1+x \rb^{l -p_w}
t^{\sum_{k \in \tau_v \cap K} p_k \Phi_t^\ast \lb \beta_k \rb}.
\end{split}
\end{align}
By this and \pref{eq:Pb}, one can get \eqref{eq:Pt-vw} for the case where $\tau_v \subset \tau_{w, q, K}$.

Next, suppose $\tau_v \not\subset \tau_{w, q, K}$.
By \eqref{eq:t-delta} again, we have
\begin{align}
t^{\lambda_{l, v}-l \lambda_w} \cdot \widetilde{\Phi}_t^\ast \lb t^{v-lw} \rb
=\widetilde{\Phi}_t^\ast \lb \prod_{m \in A \cap \tau_v} t^{p_m \lb \mu_m-\mu_w \rb} \rb
=
\lb 1+ O \lb t^\epsilon \rb \rb \cdot
\Phi_t^\ast
 \lb \prod_{m \in A \cap \tau_v} t^{p_m \lb \mu_m-\mu_w \rb} \rb.
\end{align}
We also have
\begin{align}
\left| \Phi_t^\ast \lb \prod_{m \in A \cap \tau_v} t^{p_m \lb \mu_m-\mu_w \rb} \rb \right|
&=
\prod_{m \in \lb A \cap \tau_v \rb \setminus \lc w \rc} t^{p_m \lb \mu_m-\mu_w \rb} \\
&=
\prod_{m \in \lb A \cap \tau_v \rb \setminus \lc w \rc} t^{p_m \lb \mu_m-\mu_q \rb}
\cdot
\prod_{m \in \lb A \cap \tau_v \rb \setminus \lc w \rc} t^{p_m \lb \mu_q-\mu_w \rb} \\
& \leq C_1 \cdot 
\prod_{m \in \lb A \cap \tau_v \rb \setminus \lc w \rc} t^{p_m \lb \mu_m-\mu_q \rb}
\end{align}
for some constant $C_1>0$.
We used \pref{lm:a-range} in the last inequality.
Let $m_0 \in A \cap \tau_v$ be an element that is not in $\tau_{w, q, K}$.
Then we have
\begin{align}
\prod_{m \in \lb A \cap \tau_v \rb \setminus \lc w \rc} t^{p_m \lb \mu_m-\mu_q \rb}
\leq
t^{p_{m_0} \epsilon}
\cdot 
\prod_{m \in \lb A \cap \tau_v \rb \setminus \lc w, m_0 \rc} t^{p_m \lb \mu_m-\mu_q \rb}
=O \lb t^{\epsilon} \rb.
\end{align}
The claim of the lemma in the case of $\tau_v \not\subset \tau_{w, q, K}$ follows from these.
\end{proof}

We can see from \pref{lm:Ppi} that $\frac{1}{x^l (1+x)} P_t^{v, w}(n, x)$ is uniformly bounded.
It is also obvious from the construction of $\Phi_t$ that the $C^1$-norm of $\Phi_t$ (and hence of $\left. \Phi_t \right|_{S_t^w}$) is also bounded.
Furthermore, when we define the $C^1$-norm of $\left. \delta_t \right|_{S_t^w}$ using the the Riemannian metric induced from the Euclidean metric on the ambient space $N_\bR \times D_\varepsilon$, we have $\left| \left| \left. \delta_t \right|_{S_t^w} \right| \right|_{C^1}=O \lb t^\kappa \rb$ by \pref{pr:delta}.
From these facts and \pref{lm:vol} which we will prove later, we can see that \eqref{eq:int} is written as
\begin{align}\label{eq:int2}
\lb 1+ O \lb t^\epsilon \rb \rb \lb \log t \rb^{d} \lb
\int_{S_t^{w, q, K}} 
\frac{dx}{x^l (1+x)}
P_t^{v, w}(n, x)
\Phi_t^\ast \lb
\bigwedge_{k \in K} d\beta_k \cdot d \mathrm{vol}_{q, K} \rb +O \lb t^\kappa \rb \rb.
\end{align}
From \eqref{eq:Pb} and \pref{lm:Ppi}, we can see that \eqref{eq:int2} is equal to
\begin{align}\label{eq:int3}
\lb 1+ O \lb t^\epsilon \rb \rb (-1)^{p_w} \lb \log t \rb^{d} \lb
\int_{S_{\varepsilon_0}^1} 
\frac{(1+\tilde{x})^{l-p_w-1}}{\tilde{x}^l}
d\tilde{x}
\int_{\ld 0, \epsilon \rd^{|K|}}
\lb 
\int_{S_t^{w, q, K} \cap \lc \beta=b, x=\tilde{x} \rc}
\Phi_t^\ast \lb d \mathrm{vol}_{q, K} \rb
\rb
\phi_{q, K}^{v, w} \lb b \rb d b +O \lb t^\kappa \rb \rb,
\end{align}
where $b:=\lc b_k \rc_{k \in K}$ is the coordinate system on $\ld 0, \epsilon \rd^{|K|}$ and $\phi_{q, K}^{v, w}$ is a function on $\ld 0, \epsilon \rd^{|K|}$ defined by
\begin{align}
\phi_{q, K}^{v, w}(b):=
\left\{ \begin{array}{ll}
\frac{r_q^{p_q} \prod_{k \in \tau_v \cap K} \lb r_k t^{b_k} \rb^{p_k}}{\lb r_q+\sum_{k \in K} r_k t^{b_k}\rb^{l-p_w} } & \tau_v \subset \tau_{w, q, K} \\
0 & \tau_v \not\subset \tau_{w, q, K}.
\end{array} 
\right.
\end{align}

\subsection{The Duistermaat--Heckman theorem/complex volumes of polytopes}\label{sc:further}

In this subsection, we compute the integral 
\begin{align}
\int_{S_t^{w, q, K} \cap \lc \beta=b, x=\tilde{x} \rc}
\Phi_t^\ast \lb d \mathrm{vol}_{q, K} \rb
\end{align}
appearing in \eqref{eq:int3}.
Following \cite[Section 3.4]{MR4194298}, for a subset $J \subset A \setminus \lc w, q \rc$ containing $K$, we consider the polytope $E_{q, J} \lb \lc b_j \rc_{j \in J}, x \rb$ in the $\gamma$-plane defined by
\begin{align}
\mu_j \lb \alpha_{w, q, K}'(b, x), b, \gamma \rb-\mu_q \lb \alpha_{w, q, K}'(b, x), b, \gamma \rb &=b_j, \forall j \in J \setminus K  \\
\mu_m \lb \alpha_{w, q, K}'(b, x), b, \gamma \rb-\mu_q \lb \alpha_{w, q, K}'(b, x), b, \gamma \rb &\geq 0, \forall m \in A \setminus \lb \lc w, q \rc \sqcup J \rb.
\end{align}
We also set $D_{q, K, J}(b, x):=\bigcup_{b' \in \ld 0, \epsilon \rd^{|J \setminus K|}} E_{q, J} \lb b, b', x \rb$.

\begin{lemma}\label{lm:E}
One has
\begin{align}\label{eq:inc-exc}
\int_{S_t^{w, q, K} \cap \lc \beta=b, x=\tilde{x} \rc}
\Phi_t^\ast \lb d \mathrm{vol}_{q, K} \rb
=
\sum_{J \subset A \setminus \lc w, q \rc, J \supset K} (-1)^{J \setminus K}
\int_{D_{q, K, J}(b, \tilde{x})}
\lb \Phi_t \circ s_{b, x}' \rb^\ast \lb d \mathrm{vol}_{q, K} \rb+O \lb t^{\epsilon} \rb,
\end{align}
where $s_{b, x}'$ is the map from the $\gamma$-plane to $N_\bR \times S_{\varepsilon_0}^1$ given by $\gamma \mapsto \lb \alpha_{w, q, K}' (b, x), b, \gamma, x\rb$.
\end{lemma}
\pref{lm:E} can be proved by the same argument as the one given in \cite[Section 5.4, Section 3.4]{MR4194298}.
We use \pref{lm:ac}.

\begin{lemma}\label{lm:Pba}
On the image of $D_{q, K, J}(b, x)$ by the map $s_{b, x}'$, we have
\begin{align}
\Phi_t^\ast \alpha&\equiv \alpha_{w, q, K}' (b, x)+ \frac{\sqrt{-1}}{\log t} \cdot \lb \Arg \lb 1+x \rb- \arg \lb -\frac{c_q}{c_w} \rb \rb\\
\Phi_t^\ast \beta_k&\equiv b_k+ \frac{\sqrt{-1}}{\log t} \cdot \lb \arg \lb -\frac{c_q}{c_w} \rb- \arg \lb -\frac{c_j}{c_w} \rb \rb \quad (k \in K)\\
\Phi_t^\ast \beta_j'&=\beta_j'+ \frac{\sqrt{-1}}{\log t} \cdot \lb \arg \lb -\frac{c_q}{c_w} \rb- \arg \lb -\frac{c_j}{c_w} \rb \rb \quad (j \in J \setminus K),
\end{align}
when $t>0$ is sufficiently small.
Here $\beta_j':=\mu_j-\mu_{q}$ for $j \in J \setminus K$.
\end{lemma}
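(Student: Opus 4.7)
The plan is to reduce everything to the linearity of $\Phi_t^\ast$ on affine functions, combined with the defining property \eqref{eq:phase-shift} of the phase-shifting function $\phi$. For any integral affine function $\mu(n)=\lambda+\la m, n\ra$, the definition \eqref{eq:Phi} of $\Phi_t$ gives immediately
\begin{align*}
\Phi_t^\ast \mu(n,x) = \mu(n) + \frac{\sqrt{-1}}{\log t}\,\la m, \phi(n,x)\ra.
\end{align*}
Applied to $\alpha=\mu_q-\mu_w$, $\beta_k=\mu_k-\mu_q=(\mu_k-\mu_w)-(\mu_q-\mu_w)$, and $\beta_j'=\mu_j-\mu_q$, this reduces the task to evaluating $\la m-w,\phi\ra$ for $m\in\{q\}\cup K\cup(J\setminus K)$ at the points under consideration.

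The key preliminary step is to verify that all of $q, k\in K, j\in J\setminus K$ actually belong to $K_\kappa^n$ when $n$ lies in the image of $D_{q, K, J}(b,x)$ under $s'_{b, x}$, so that \eqref{eq:phase-shift} is applicable. On this image one has $\alpha = \alpha'_{w,q,K}(b,x)$ and $\beta_k=b_k$ with $b_k\in[0,\epsilon]$ by construction of $s'_{b,x}$, and $\beta_j'\in[0,\epsilon]$ by the definition of $D_{q,K,J}(b,x)$. Since $|1+x|$ is bounded between $1-\varepsilon_0$ and $1+\varepsilon_0$ on $S_{\varepsilon_0}^1$ and $r_q/r_w+\sum_{k\in K}r_k t^{b_k}/r_w$ is bounded away from $0$ and $\infty$, the explicit formula \eqref{eq:alpha'} shows that $\alpha'_{w,q,K}(b,x)\to 0$ as $t\to +0$. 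Hence for sufficiently small $t>0$ we have $\mu_q-\mu_w=\alpha'_{w,q,K}(b,x)<\kappa$, and likewise $\mu_k-\mu_w,\,\mu_j-\mu_w<\kappa$, so that $\{q\}\cup K\cup(J\setminus K)\subset K_\kappa^n$.

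Now I apply \eqref{eq:phase-shift} term by term. For $\alpha=\mu_q-\mu_w$, since $q\in K_\kappa^n$ I get $\la q-w,\phi(n,x)\ra=\mathrm{Arg}(1+x)-\arg(-c_q/c_w)$, and on the image of $s'_{b,x}$ the real part $\alpha$ evaluates to $\alpha'_{w,q,K}(b,x)$, giving the first equation. For $\beta_k$ with $k\in K$, writing $\beta_k=(\mu_k-\mu_w)-(\mu_q-\mu_w)$ and using \eqref{eq:phase-shift} for both $k$ and $q$, the $\mathrm{Arg}(1+x)$ terms cancel and leave $\la k-q,\phi\ra=\arg(-c_q/c_w)-\arg(-c_k/c_w)$; combined with $\beta_k=b_k$ on the image of $s'_{b,x}$, this gives the second equation. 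The identity for $\beta_j'$ $(j\in J\setminus K)$ is identical in form, except that $\beta_j'$ is a genuine variable on the $\gamma$-plane rather than a pinned coordinate, so $=$ replaces $\equiv$.

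I do not expect a real obstacle here: the argument is essentially a bookkeeping computation once the inclusion $\{q\}\cup K\cup(J\setminus K)\subset K_\kappa^n$ is established. The only point requiring a little care is that inclusion, and it follows cleanly from the uniform boundedness of $\alpha'_{w,q,K}(b,x)$ together with the fact that the parameters $b_k,b_j'$ lie in the small interval $[0,\epsilon]$ with $\epsilon<\kappa/2$.
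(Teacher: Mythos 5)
Your proposal is correct and takes essentially the same approach as the paper: verify the inclusion $\{q\}\cup K\cup(J\setminus K)\subset K_\kappa^n$ using the smallness of $\alpha'_{w,q,K}(b,x)$ and the bounds $b_k,\beta_j'\in[0,\epsilon]$ with $\epsilon<\kappa/2$, then apply the phase-shifting identity \eqref{eq:phase-shift} term by term. The only minor difference is that you establish the smallness of $\alpha'_{w,q,K}(b,x)$ directly from the explicit formula \eqref{eq:alpha'} (showing it tends to $0$ uniformly), whereas the paper cites \pref{lm:ac} and \pref{lm:a-range} to get $\alpha'_{w,q,K}(b,x)\leq\epsilon+O(t^\epsilon)$; both routes give the needed bound $<\kappa$.
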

\begin{proof}
Let $(n, x)$ be an element of the image of $D_{q, K, J}(b, x)$ by the map $s_{b, x}'$.
By \eqref{eq:aa'} and \pref{lm:a-range}, we have
\begin{align}
\lb \mu_q-\mu_{w} \rb(n)
=\alpha_{w, q, K}'(b, x) 
=\alpha_{w, q, K} \lb b, \gamma, x \rb+O \lb t^\epsilon \rb 
\leq \epsilon +O \lb t^\epsilon \rb \leq \kappa
\end{align}
for sufficiently small $t>0$.
For any element $j \in J$, we also have 
\begin{align}
\lb \mu_j-\mu_{w} \rb(n)
=
\lb \mu_j-\mu_{q} \rb(n)+\lb \mu_q-\mu_{w} \rb(n)
=
b_j
+
\lb \mu_q-\mu_{w} \rb(n)
\leq 2 \epsilon +O \lb t^\epsilon \rb \leq \kappa
\end{align}
for sufficiently small $t>0$.
Thus we have $\lc q, j \rc \subset K_\kappa^n$.
By using \eqref{eq:phase-shift}, we obtain the claim.
\end{proof}

\begin{lemma}{\rm(cf.~\cite[Lemma 3.2]{MR4194298})}\label{lm:DH}
The polytope $E_{q, J} \lb \lc b_j \rc_{j \in J}, x \rb$ is non-empty if and only if the convex hull of $\lc w, q \rc \sqcup J$ is contained in $\scrT$, and its affine volume is
\begin{align}\label{eq:vol}
\vol \lb E_{q, J} \lb \lc b_j \rc_{j \in J}, x \rb \rb=\int_{Y_{w}}
\exp \lb \omega_{\lambda}^{w} 
-\sum_{j \in J} b_j D_j^w - \alpha_{w, q, K}'(b, x) \sigma^w \rb
\cdot 
D_q^w \cdot \prod_{j \in J} D_j^w.
\end{align}
\end{lemma}

The above lemma can also be shown in a similar manner to \cite[Lemma 3.2]{MR4194298}.
We use the Duistermaat--Heckman theorem (cf.~\cite[Theorem 2.10]{MR1301331}) for obtaining \eqref{eq:vol}.

It turns out by \pref{lm:DH} that in \eqref{eq:inc-exc}, it suffices to take the sum only over $J$ such that the convex hull of $\lc w, q \rc \sqcup J$ is contained in $\scrT$.
By \pref{lm:Pba}, we also have $\Phi_t^\ast \lb d\beta_j' \rb=d\beta_j'$ for $j \in J \setminus K$.
When we identify the polytope $E_{q, J} \lb \lc b_j \rc_{j \in J}, x \rb$ with its image by the map $s_{b, x}'$, \eqref{eq:inc-exc} can be written as
\begin{align}\label{eq:inc-exc2}
\begin{split}
&\int_{S_t^{w, q, K} \cap \lc \beta=b, x=\tilde{x} \rc}
\Phi_t^\ast \lb d \mathrm{vol}_{q, K} \rb \\
&=
\sum_{J \subset A \setminus \lc w, q \rc, J \supset K} (-1)^{J \setminus K}
\int_{\ld 0, \epsilon \rd^{|J \setminus K|}} \bigwedge_{j \in J \setminus K} d b_j'
\int_{E_{q, J} \lb b, b', x \rb}
\Phi_t^\ast \lb d \mathrm{vol}_{q, J} \rb + O \lb t^{\epsilon} \rb,
\end{split}
\end{align}
where $b':=\lc b_j' \relmid j \in J \setminus K \rc$ is the coordinate system on $\ld 0, \epsilon \rd^{|J \setminus K|}$, and $\mathrm{vol}_{q, J}$ is the holomorphic form such that $\mathrm{vol}_{q, K}=\bigwedge_{j \in J \setminus K} d\beta_j' \wedge \mathrm{vol}_{q, J}$.

In order to compute the integral 
$\int_{E_{q, J} \lb b, b', x \rb} \Phi_t^\ast \lb d \mathrm{vol}_{q, J} \rb$ in \eqref{eq:inc-exc2}, 
we consider the \emph{complex volumes} of polytopes which were introduced in \cite[Section 5.3]{MR4194298}.
By \pref{lm:Pba}, we can see that the image of $E_{q, J}\lb b, b', x \rb$ by the map $\Phi_t$ is contained in the complex affine subspace of $N_\bC$ defined by
\begin{align}\label{eq:Pbeta}
\begin{split}
\mu_q-\mu_{w}&=\alpha_{w, q, K}' (b, x)+ \frac{\sqrt{-1}}{\log t} \cdot \lb \Arg \lb 1+x \rb- \arg \lb -\frac{c_q}{c_w} \rb \rb \\
\mu_j-\mu_{q}&=b_j+ \frac{\sqrt{-1}}{\log t} \cdot \lb \arg \lb -\frac{c_q}{c_w} \rb- \arg \lb -\frac{c_j}{c_w} \rb \rb, \quad j \in J,
\end{split}
\end{align}
which we let $S_{q, J}\lb b, b', x \rb$ denote.
Each facet of $E_{q, J}\lb b, b', x \rb$ is given by $E_{q, J}\lb b, b', x \rb \cap \lc \mu_{m_0}-\mu_q =0 \rc$ for some $m_0 \in A_w \setminus \lb \lc q \rc \sqcup J \rb$.
For any point $n$ in the facet, we have $m_0 \in K_\kappa^n$. 
We can see from \eqref{eq:Phi} that the facet is mapped by the map $\Phi_t$ to the complex affine hyperplane in $S_{q, J}\lb b, b', x \rb$ given by 
\begin{align}\label{eq:Pbeta2}
\mu_{m_0}-\mu_{q}&=\frac{\sqrt{-1}}{\log t} \cdot \lb \arg \lb -\frac{c_q}{c_w} \rb- \arg \lb -\frac{c_{m_0}}{c_w} \rb \rb.
\end{align}
The complex volume of the pair $\lb E_{q, J} \lb b, b', x \rb, \Phi_t \rb$ with respect to the form $d \mathrm{vol}_{q, J}$ in the sense of \cite[Section 5.3]{MR4194298} is defined to be $\int_{E_{q, J} \lb b, b', x \rb} \Phi_t^\ast \lb d \mathrm{vol}_{q, J} \rb$.
By \cite[Lemma 5.8]{MR4194298}, we can see that it is a polynomial function of the constant terms of affine equations \eqref{eq:Pbeta} and \eqref{eq:Pbeta2}.
By analytic continuation of \eqref{eq:vol}, we obtain
\begin{align}\label{eq:c-vol} 
\int_{E_{q, J} \lb b, b', x \rb} \Phi_t^\ast \lb d \mathrm{vol}_{q, J} \rb
=
\int_{Y_{w}}
E^{q, K, J}_t \lb b, b', x, D \rb \cdot 
D_q^w \cdot \prod_{j \in J} D_j^w,
\end{align}
where
\begin{align}
E^{q, K, J}_t \lb b, b', x, D \rb
:=
\exp \lb \omega_{\lambda}^{w} 
+ \frac{\sqrt{-1}}{\log t}
\sum_{m \in A_{w}} 
\lb \arg \lb -\frac{c_m}{c_w}\rb -\Arg(1+x)\rb D_m^w
-\sum_{j \in J} b_j D_j^w - \alpha_{w, q, K}'(b, x) \sigma^w \rb.
\end{align}
This is obtained by substituting 
$\lambda_m+\frac{\sqrt{-1}}{\log t} \arg \lb -\frac{c_m}{c_w} \rb$ to $\lambda_m$ for all $m \in A_w$, 
and $\lambda_w+\frac{\sqrt{-1}}{\log t} \Arg(1+x)$ to $\lambda_w$ in \eqref{eq:vol}.

\subsection{The sum of local integrals}\label{sc:sum}

\begin{lemma}\label{lm:1}
One has
\begin{align}\label{eq:a}
\int_{S_t^{w}} \widetilde{\Phi}_t^\ast \pi_1^\ast i_t^\ast \omega_t^{l, v}
=\lb 1+ O \lb t^\epsilon \rb \rb (-1)^{p_w} \cdot
\lb
\int_{Y_{w}} 
\int_{S_{\varepsilon_0}^1} 
\frac{(1+x)^{l-p_w-1}}{x^l}
P_t(x, D)_{d+1}
dx+
O \lb t^\epsilon \rb
\rb,
\end{align}
where $P_t(x, D)_{d+1}$ denotes the part of $P_t(x, D)$ in degree $2(d+1)$, which is defined by
\begin{align}\label{eq:PtxD}
P_t(x, D):=\lb \log t \rb^d
\sum_{\substack{q \in A_w\\ K \subset J \subset A_w \setminus \lc q \rc}}
\lb -1 \rb^{| J \setminus K |} 
\int_{\ld 0, \epsilon \rd^{|J|}} 
\phi_{q, K}^{v, w} \lb b \rb \cdot
E^{q, K, J}_t \lb b, b', x, D \rb
\cdot 
D_q^w \prod_{j \in J} D_j^w dbdb',
\end{align}
where the sum is taken over all pairs $(q, K, J)$ such that the convex hull of $\lc q \rc \sqcup J$ is in $\scrT$.
\end{lemma}
\begin{proof}
The formula is obtained by taking the sum of the local integrals $\int_{S_t^{w, q, K}} \widetilde{\Phi}_t^\ast \pi^\ast_1 i_t^\ast \omega_t^{l, v}$ computed as \eqref{eq:int3} and using \eqref{eq:inc-exc2} and \eqref{eq:c-vol}.
\end{proof}

We set
\begin{align}
F_t(x, D)&:=
t^{-\omega_{\lambda}^{w}} \cdot
\exp \lb -\sqrt{-1}
\sum_{m \in A_{w}} 
\lb \arg \lb -\frac{c_m}{c_w}\rb -\Arg(1+x)\rb D_m^w \rb,\\
G^{q, K, J}_t(x, D)&:=\int_{\ld 0, -\epsilon \log t \rd^{|J|}} 
\varphi_{q, K}^{v, w} \lb s \rb \cdot \exp \lb -\sum_{j \in J} s_j D_j^w - \sigma^w \cdot 
\log \lb 
\frac{\frac{r_q}{r_{w}} + \sum_{k \in K} \frac{r_k}{r_{w}} e^{-s_k}}{|1+x|} 
\rb \rb ds
\cdot D_q^w \prod_{j \in J} D_j^w,
\end{align}
where $s_j$ $(j \in J)$ is the coordinate on the interval $\ld 0, -\epsilon \log t \rd$, and $\varphi_{q, K}^{v, w}$ is the function on $\lb \bR_{\geq 0} \rb^{|K|}$ defined by 
\begin{align}
\varphi_{q, K}^{v, w} \lb s \rb:=
\left\{ \begin{array}{ll}
\frac{r_q^{p_q} \prod_{k \in \tau_v \cap K} \lb r_k e^{-s_k} \rb^{p_k}}{\lb r_q+\sum_{k \in K} r_k e^{-s_k} \rb^{l-p_w} } & \mathrm{all\ vertices\ of\ } \tau_v\ \mathrm{are\ contained\ in\ } \lc w, q \rc \sqcup K \\
0 & \mathrm{otherwise}.
\end{array} 
\right.
 \end{align}
We also define
\begin{align} \label{eq:QtxD}
Q_t(x, D):=
(-1)^d
\sum_{\substack{q \in A_w\\ K \subset J \subset A_w \setminus \lc q \rc}}
\lb -1 \rb^{| J \setminus K |} 
F_t(x, D) \cdot G^{q, K, J}_t(x, D).
\end{align}
Although we considered the sum only over pairs $(q, K, J)$ such that the convex hull of $\lc q \rc \sqcup J$ is contained in $\scrT$ in \eqref{eq:PtxD}, we do not impose this restriction for the sum in \eqref{eq:QtxD}.

\begin{lemma}\label{lm:2}
One has $P_t(x, D)_{d+1}=Q_t(x, D)_{d+1}$.
\end{lemma}
\begin{proof}
If we multiply each of $\omega_{\lambda}^{w}$ and $D_m^w$ in $P_t(x, D)$ of \eqref{eq:PtxD} by $- \log t$, then the degree $2(d+1)$-part of $P_t(x, D)$ is multiplied by $\lb - \log t \rb^{d+1}$, and the part $E^{q, K, J}_t \lb b, b', x, D \rb \cdot D_q^w \prod_{j \in J} D_j^w$ changes to
\begin{align}
F_t(x, D) \cdot \exp \lb -\sum_{j \in J} (-\log t) b_j D_j^w 
- \sigma^w \log \lb \frac{\frac{r_q}{r_{w}} + \sum_{k \in K} \frac{r_k}{r_{w}} t^{b_k}}{|1+x|} \rb 
\rb
\cdot 
\lb - \log t \rb^{|J|+1} \cdot D_q^w \prod_{j \in J} D_j^w.
\end{align}
Here we used \eqref{eq:alpha'}.
By this and changing the coordinates from $b_j$ to $s_j:=- \log t \cdot b_j$ $(j \in J)$, we obtain $P_t(x, D)_{d+1}=Q_t(x, D)_{d+1}$.
Notice that the summand for $(q, K, J)$ in  \eqref{eq:QtxD} vanishes if the convex hull of $\lc q \rc \sqcup J$ is not contained in $\scrT$ due to $D_q^w \prod_{j \in J} D_j^w$ in $G^{q, K, J}(x, D)$.
\end{proof}

By \pref{lm:1} and \pref{lm:2}, we obtain the following:

\begin{proposition}
One has
\begin{align}\label{eq:b}
\int_{S_t^{w}} \widetilde{\Phi}_t^\ast \pi_1^\ast i_t^\ast \omega_t^{l, v}
=\lb 1+ O \lb t^\epsilon \rb \rb (-1)^{p_w} \cdot
\lb
\int_{Y_{w}} 
\int_{S_{\varepsilon_0}^1} 
\frac{(1+x)^{l-p_w-1}}{x^l}
Q_t(x, D)_{d+1}
dx+
O \lb t^\epsilon \rb
\rb.
\end{align}
\end{proposition}

In order to compute the right hand side of \eqref{eq:b}, we will calculate the integral
\begin{align}
\int_{S_{\varepsilon_0}^1} 
\frac{(1+x)^{l-p_w-1}}{x^l}
Q_t(x, D)
dx.
\end{align}
This is equal to
\begin{align}\label{eq:Q-int}
(-1)^d t^{-\omega_{\lambda}^{w}}
\exp \lb -\sqrt{-1} \sum_{m \in A_{w}} 
\arg \lb -\frac{c_m}{c_w}\rb D_m^w \rb \\
\cdot
\int_{S_{\varepsilon_0}^1}
\frac{(1+x)^{l-p_w-1}}{x^l}
\exp \lb \sqrt{-1} \sum_{m \in A_{w}} \Arg(1+x) D_m^w \rb
\exp \lb \sigma^w \log |1+x| \rb
dx \\
\cdot
\sum_{\substack{q \in A_w\\ K \subset J \subset A_w \setminus \lc q \rc}}
\lb -1 \rb^{| J \setminus K |} 
\int_{\ld 0, -\epsilon \log t \rd^{|J|}} 
\varphi_{q, K}^{v, w} \lb s \rb
\exp \lb -\sum_{j \in J} s_j D_j^w - \sigma^w
\log \lb 
\frac{r_q}{r_{w}} + \sum_{k \in K} \frac{r_k}{r_{w}} e^{-s_k}
\rb \rb ds
\cdot 
D_q^w \prod_{j \in J} D_j^w.
\end{align}
The second line of \eqref{eq:Q-int} is equal to
\begin{align}
\begin{split}
\int_{S_{\varepsilon_0}^1}
\frac{(1+x)^{l-p_w-1}}{x^l}
\exp \lb \sigma^w \Log (1+x) \rb
dx
&=
\int_{S_{\varepsilon_0}^1} 
\frac{(1+x)^{l-p_w-1}}{x^l} (1+x)^{\sigma^w}
dx \\ \label{eq:x-int2}
&=\lb 2 \pi \sqrt{-1} \rb \cdot
\binom{\sigma^w+l-p_w-1}{l-1},
\end{split}
\end{align}
where $\Log (1+x):=\log \left| 1+x \right| +\sqrt{-1} \Arg(1+x)$ is the principal value of the complex logarithmic function, and the last term is the binomial coefficient, i.e.,
\begin{align}
\binom{\sigma^w+l-p_w-1}{l-1}=
\frac{1}{(l-1)!} 
\prod_{i=0}^{l-2} \lb \sigma^w+l-p_w-1-i \rb.
\end{align}
Next, we compute the third line of \eqref{eq:Q-int}.
When we expand the exponential, this is equal to
\begin{align}\label{eq:ex-ex}
\begin{aligned}
I_\emptyset
+\sum_{\substack{q \in A_w\\ K \subset J \subset A_w \setminus \lc q \rc \\ J \neq \emptyset}}
\sum_{\substack{h \in \bZ_{\geq 0} \\ \vec{m} \in \lb \bZ_{\geq 0} \rb^J}}
\frac{\lb -1 \rb^{|K|}}{h ! \prod_{j \in J} m_j !}
\lc
\int_{\ld 0, -\epsilon \log t \rd^{|J|}}
\lb \prod_{j \in J} s_j^{m_j} \rb
\log^h
\lb 
\frac{r_q}{r_{w}} + \sum_{k \in K} \frac{r_k}{r_{w}} e^{-s_k}
\rb
\varphi_{q, K}^{v, w} \lb s \rb ds
\rc \\ 
\cdot (- \sigma^w)^h \cdot D_q^w \prod_{j \in J} \lb -D_j^w \rb^{m_j+1},
\end{aligned}
\end{align}
where
\begin{align}\label{eq:Ie}
I_\emptyset:=
\left\{ \begin{array}{ll}
\sum_{q \in A_w} \exp \lb - \sigma^w \log \lb \frac{r_q}{r_w} \rb \rb D_q^w & \tau_v=w \\
\exp \lb - \sigma^w \log \lb \frac{r_q}{r_w} \rb \rb D_q^w & \mathrm{either\ } \tau_v=q \mathrm{\ or\ } \tau_v= \conv \lb \lc w, q \rc \rb \mathrm{\ for\ some\ } q \in A_w \\
0 & \mathrm{otherwise.}
\end{array}
\right.
\end{align}
The first term $I_\emptyset$ arises from the case of $J=K=\emptyset$.
By taking the sum concerning $K$ first, we write \eqref{eq:ex-ex} as
\begin{align}\label{eq:I-sum}
I_\emptyset+
\sum_{\substack{\lc q \rc \sqcup J \subset A_w \\ J \neq \emptyset}}
\sum_{\substack{h \in \bZ_{\geq 0} \\ \vec{m} \in \lb \bZ_{\geq 0} \rb^J}}
\frac{I_{h, \vec{m}}^{\epsilon, q}(t)}{h ! \prod_{j \in J} m_j !}
\cdot (- \sigma^w)^h \cdot D_{q}^w \prod_{j \in J} \lb -D_j^w \rb^{m_j+1},
\end{align}
where
\begin{align}
I_{h, \vec{m}}^{\epsilon, q}(t)&:=
\int_{\ld 0, -\epsilon \log t \rd^{|J|}}
\prod_{j \in J} s_j^{m_j}
\sum_{K \subset J} \lb -1 \rb^{|K|}
\log^h \lb \frac{r_q}{r_{w}} + \sum_{k \in K} \frac{r_k}{r_{w}} e^{-s_k} \rb
\varphi_{q, K}^{v, w} \lb s \rb
ds.
\end{align}
We also define
\begin{align}\label{eq:Iq}
I_{h, \vec{m}}^{q}&:=
\int_{\ld 0, \infty \rb^J}
\prod_{j \in J} s_j^{m_j}
\sum_{K \subset J} \lb -1 \rb^{|K|}
\log^h \lb \frac{r_q}{r_{w}} + \sum_{k \in K} \frac{r_k}{r_{w}} e^{-s_k} \rb
\varphi_{q, K}^{v, w} \lb s \rb
ds.
\end{align}

\begin{lemma}{\rm(cf.~\cite[Section 1.4, Lemma 3.4]{MR4194298})}\label{lm:I-int-conv}
The integral \eqref{eq:Iq} converges, and one has 
\begin{align}
I_{h, \vec{m}}^{\epsilon, q}(t)=I_{h, \vec{m}}^{q}+O \lb (-\log t)^{\left| \vec{m} \right|} t^{\epsilon} \rb,
\end{align}
where $\left| \vec{m} \right|:=\sum_{j \in J} m_j$.
\end{lemma}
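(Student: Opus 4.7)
The plan is to adapt the technique of \cite[Lemma 3.4]{MR4194298} to the present integral. Both the convergence of $I_{h,\vec{m}}^q$ and the truncation estimate rest on showing that the alternating sum over $K \subset J$ produces enough exponential decay in each $s_j$ to overcome the polynomial factor $\prod_{j \in J} s_j^{m_j}$.

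First I would analyze the cancellation in the inner sum
\[
H_J(s) := \sum_{K \subset J} (-1)^{|K|} \log^h\!\lb \frac{r_q}{r_w} + \sum_{k \in K} \frac{r_k}{r_w} e^{-s_k} \rb \varphi_{q, K}^{v, w}(s).
\]
For each $j \in J$, pairing the term indexed by $K \ni j$ with the one indexed by $K \setminus \lc j \rc$ rewrites this as
\[
H_J(s) = \sum_{K'' \subset J \setminus \lc j \rc} (-1)^{|K''|} \lb G_{K''}(s) - G_{K'' \cup \lc j \rc}(s) \rb,
\]
where $G_K(s)$ denotes the summand for a given $K$. Since both the logarithmic factor and the rational factor $\varphi_{q, K}^{v, w}$ depend on $s_j$ only through $e^{-s_j}$, each such difference is $O(e^{-s_j})$ as $s_j \to \infty$, uniformly in the remaining variables. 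Iterating this observation for every $j \in J$ yields a uniform pointwise bound of the form
\[
|H_J(s)| \leq C \cdot (1+\|s\|)^h \prod_{j \in J} e^{-s_j}
\]
for some constant $C>0$.

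The convergence of $I_{h,\vec{m}}^q$ is then immediate, since the dominating function $C \prod_{j \in J} s_j^{m_j} (1+\|s\|)^h \prod_{j \in J} e^{-s_j}$ is integrable on $\ld 0, \infty \rb^J$. For the truncation estimate, I would write
\[
I_{h,\vec{m}}^q - I_{h,\vec{m}}^{\epsilon, q}(t) = \int_{\ld 0, \infty \rb^J \setminus \ld 0, -\epsilon \log t \rd^J} \lb \prod_{j \in J} s_j^{m_j} \rb H_J(s) \, ds,
\]
cover the region of integration by the slices $\lc s_{j_0} > -\epsilon \log t \rc$ for $j_0 \in J$, and estimate each contribution using the one-dimensional tail bound $\int_{-\epsilon \log t}^\infty s^m e^{-s}\, ds = O \lb (-\log t)^m \, t^\epsilon \rb$ (obtained by repeated integration by parts). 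This yields the claimed error $O \lb (-\log t)^{|\vec{m}|} t^\epsilon \rb$.

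The main obstacle will be executing the first step rigorously, because the combinatorial indicator in the definition of $\varphi_{q, K}^{v, w}$ — requiring all vertices of $\tau_v$ to lie in $\lc w, q \rc \sqcup K$ — is not invariant under the pairing $K \leftrightarrow K \cup \lc j \rc$. One must verify case by case that either both summands of a given pair vanish, or the indicator stabilizes once $K$ already contains $\tau_v \cap J$, or the extra numerator factors $(r_k e^{-s_k})^{p_k}$ with $k \in \tau_v \cap K$ themselves supply the missing exponential decay. Only after this combinatorial check goes through does the finite-difference argument produce the desired pointwise decay of $H_J(s)$.
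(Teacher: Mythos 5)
Your proof follows essentially the same approach as the paper, which after the change of variables $X_j = e^{-s_j}$ deduces the bound $|g_{h,J}^q(X)| \leq C\prod_{j\in J} X_j$ from analyticity together with vanishing of the alternating sum along each coordinate hyperplane; the case distinction you flag at the end is exactly the paper's dichotomy, with $j \in J\cap\tau_v$ handled by the factor $(r_j X_j)^{p_j}$ (the $K''$-term with $j \nin K''$ vanishes outright since the indicator fails) and $j \in J\setminus\tau_v$ handled by the cancellation of the pair $K'' \leftrightarrow K''\sqcup\{j\}$ at $X_j=0$. Your tail estimate via slices $\{s_{j_0}>-\epsilon\log t\}$ and the one-dimensional bound $\int_{-\epsilon\log t}^\infty s^m e^{-s}\,ds = O((-\log t)^m t^\epsilon)$ likewise agree with the paper's treatment.
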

\begin{proof}
First, we show the former claim.
If we change the coordinates from $s_j$ to $X_j:=e^{-s_j}$ $(j \in J)$, then \eqref{eq:Iq} is equal to
\begin{align}\label{eq:int-01}
\int_{\ld 0, 1 \rd^{|J|}}
g_{h, J}^{q} \lb X \rb \cdot 
\prod_{j \in J} \lb -\log X_j \rb^{m_j}
 \frac{d X_j}{X_j},
\end{align}
where $X= \lc X_j \rc_{j \in J}$, and $g_{h, J}^{q} \lb X \rb$ is defined to be
\begin{align}\label{eq:g-hq}
\sum_{K \subset J} \lb -1 \rb^{|K|}
\log^h \lb \frac{r_q}{r_{w}} + \sum_{k \in K} \frac{r_k}{r_{w}} X_k \rb
\lb {r_q}^{p_q} \prod_{k \in \lb A_w \cap \tau_v \rb \setminus \lc q \rc} \lb r_k X_k \rb^{p_k} \rb
\lb r_q+\sum_{k \in K} r_k X_k \rb^{-l+p_w}.
\end{align}
The first sum in \eqref{eq:g-hq} is taken over all $K \subset J$ such that all vertices of $\tau_v$ are contained in $\lc w, q \rc \sqcup K$.
The function $g_{h, J}^{q} \lb X \rb$ is analytic in a neighborhood of $\ld 0, 1 \rd^{|J|}$, and obviously vanishes along the coordinate hyperplane $\lc X_k=0 \rc$ for $k \in \lb A_w \cap \tau_v \rb \setminus \lc q \rc$.
Furthermore, it vanishes also along the coordinate hyperplane $\lc X_j=0 \rc$ for $j \in J \setminus \tau_v$, since the terms of $K=K_0$ and $K_0 \sqcup \lc j \rc$ cancel each other out for any $K_0 \subset J$ such that $j \nin K_0$ and $\lc w, q \rc \sqcup K_0$ contains all vertices of $\tau_v$.
By these and $\ld \lb A_w \cap \tau_v \rb \setminus \lc q \rc \rd \cup \lb J \setminus \tau_v \rb=J$, we have
\begin{align}\label{eq:b-01}
g_{h,J}^{q} \lb X \rb \leq C \cdot \prod_{j \in J} X_j
\end{align}
on $\ld 0, 1 \rd^{|J|}$, where $C >0$ is some constant.
From this and \eqref{eq:int-01}, we can see that the integral \eqref{eq:Iq} converges.

By using the fact
$
\int_{-\epsilon \log t}^\infty e^{-s} s^{m} ds
=O \lb (-\log t)^m t^{\epsilon} \rb
$
and \eqref{eq:b-01}, we can also obtain
\begin{align}
\left| I_{h, \vec{m}}^{q}-I_{h, \vec{m}}^{\epsilon, q}(t) \right|
\leq 
C
\int_{\ld -\epsilon \log t, \infty \rb^J}
\lb \prod_{j \in J} e^{-s_j} \rb \cdot
\prod_{j \in J} s_j^{m_j}ds_j
=O \lb (-\log t)^{\left| \vec{m} \right|} t^{\epsilon} \rb.
\end{align}
\end{proof}

We define
\begin{align}\label{eq:hG}
\widehat{G}:=
I_\emptyset+
\sum_{\substack{\lc q \rc \sqcup J \subset A_w \\ J \neq \emptyset}}
\sum_{\substack{h \in \bZ_{\geq 0} \\ \vec{m} \in \lb \bZ_{\geq 0} \rb^J}}
\frac{I_{h, \vec{m}}^{q}}{h ! \prod_{j \in J} m_j !}
(- \sigma^w)^h \cdot D_{q}^w \cdot \prod_{j \in J} \lb -D_j^w \rb^{m_j+1}.
\end{align}
This is the one obtained by replacing $I_{h, \vec{m}}^{\epsilon, q}(t)$ in \eqref{eq:I-sum} with $I_{h, \vec{m}}^{q}$.
From the above computations and \pref{lm:I-int-conv}, we can see that \eqref{eq:int-decomp} is equal to
\begin{align}
\lb 1+ O \lb t^\epsilon \rb \rb
(-1)^{d+p_w} \lc 
\int_{Y_{w}} 
t^{-\omega_{\lambda}^{w}}
\exp \lb -\sqrt{-1} \sum_{m \in A_{w}} 
\arg \lb -\frac{c_m}{c_w}\rb D_m^w \rb
\binom{\sigma^w+l-p_w-1}{l-1}
\widehat{G}+O \lb \lb -\log t \rb^d t^\epsilon \rb \rc.
\end{align}
Then by using \eqref{eq:est-log} and reducing the constant $\epsilon>0$, we obtain the formula \eqref{eq:int-G}.

If $\conv \lb \lc w \rc \cup \tau_v \rb \nin \scrT$, then there is a vertex of $\tau_v$ which is not contained in $A_w \sqcup \lc w \rc$.
This implies $I_\emptyset, I_{h, \vec{m}}^{q}=0$, and $\widehat{G}=0$.
Therefore, in this case, we have $\int_{C_t^{w}} \Omega_t^{l, v} =O \lb t^\epsilon \rb$ which is one of the claims of \pref{th:main1}.

\begin{remark}
In \cite{MR4194298}, they compute the period integral by expressing the holomorphic volume form as 
\begin{align}\label{eq:hol-vol}
\frac{1}{d f_t} \lb \bigwedge_{i=0}^{d} \frac{dz_i}{z_i} \rb
\end{align}
and by integrating it over the cycle $C_t^w$ $(w=0)$ directly.
When we think of the period integral as an integral of $\omega_t^{l, v}$ over the tube $T_t^w$, what they do amounts to integrating $\omega_t^{l, v}$ first with respect to the coordinate $x$ (that goes around the cycle $C_t^w$) in our notation.
On the other hand, in this article, we integrate $\omega_t^{l, v}$ first with respect to the coordinates corresponding to $\vol_{q, J}$ (in \eqref{eq:c-vol}) before integrating with respect to the coordinate $x$ (in \eqref{eq:x-int2}).
Therefore, the orders of integration are different.

Also in our setup, it might be possible to compute the period integral by expressing our form $\Omega_t^{l, v}$ in the same manner as \eqref{eq:hol-vol} and by integrating it over the cycles $C_t^w$.
($df_t$ will be replaced with $d \lb f_t \rb^l$.)
The author tried to do it.
However, the integrand in our setup is more complicated than that of \cite{MR4194298}, and the author did not succeed to do it.
Instead, it was possible to compute the period integral by thinking of it as an integral over the tube and by changing the order of integration.
\end{remark}

\subsection{The Dirichlet integral}\label{sc:re-gamma}

We assume $\conv \lb \lc w \rc \cup \tau_v \rb \in \scrT$, and show \pref{th:main1} for this case by rewriting \eqref{eq:hG} in terms of the gamma function.
For \eqref{eq:hG}, we regard $D=\lc D_j^w \rc_{j \in A_w}$ as positive real numbers and define the function $G \colon \lb \bR_{> 0} \rb^{|A_w|} \to \bR$ by
\begin{align}\label{eq:def-G}
G(D):=\sum_{\lc q \rc \sqcup J \subset A_w} D_q^w
\prod_{j \in J} \lb -D_j^w \rb 
\int_{\ld 0, \infty \rb^J} \exp \lb -\sum_{j \in J} s_j D_j^w \rb
\lc
\sum_{K \subset J} \lb -1 \rb^{K} 
\lb \frac{r_q}{r_{w}} + \sum_{k \in K} \frac{r_k}{r_{w}} e^{-s_k} \rb^{-\sigma^w}
\varphi_{q, K}^{v, w} \lb s \rb
\rc ds^J,
\end{align}
where $\sigma^w:=\sum_{m \in A_w} D_j^w$.
Notice that if the Taylor expansion of the integrand could be exchanged with the integral in \eqref{eq:def-G}, then the result would be the formal power series $\widehat{G}$ of \eqref{eq:hG}.
By the same argument as \cite[Lemma 4.3]{MR4194298}, we can show the following:

\begin{lemma}{\rm(cf.~\cite[Lemma 4.3]{MR4194298})}\label{lm:G-asy}
For a fixed $D \in \lb \bR_{> 0} \rb^{|A_w|}$, we have the asymptotic expansion
\begin{align}
G(yD) \sim \left. \widehat{G} \right|_{D_j \to yD_j} \quad (y \to +0),
\end{align}
where $\left. \widehat{G} \right|_{D_j \to yD_j}$ means the substitution of $yD_j$ to $D_j$ in the formal power series $\widehat{G}$ defined in \eqref{eq:hG}.
\end{lemma}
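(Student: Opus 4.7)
The plan is to derive the asymptotic expansion by a Watson-type term-by-term integration argument applied to the integral representation \eqref{eq:def-G}, rather than to the closed form $I(D)$ of \pref{lm:G-I}. After substituting $D_j^w \mapsto y D_j$ and changing variables $X_k := e^{-s_k}$ as in the proof of \pref{lm:I-int-conv}, the integrand becomes the product of $\prod_{j \in J} X_j^{yD_j}$ and $\Psi^q_J(X,y) := \sum_{K \subset J} \lb -1 \rb^{|K|} \lb \frac{r_q}{r_w} + \sum_{k \in K} \frac{r_k}{r_w} X_k \rb^{-y\Sigma} \varphi_{q,K}^{v,w}(X)$, integrated against $\prod_{j \in J} dX_j/X_j$ over $\ld 0, 1 \rd^J$, where $\Sigma := \sum_{j \in A_w} D_j$. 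Since the base of the $y\Sigma$-power lies in a compact subset of $\bR_{>0}$ on $\ld 0, 1 \rd^J$, the factor $\Psi^q_J(X, y)$ is analytic in $y$ near $0$ uniformly in $X$.

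I would then expand $X_j^{yD_j} = \sum_{m_j} \lb yD_j \log X_j \rb^{m_j}/m_j!$ and $\lb \frac{r_q}{r_w} + \sum_{k \in K} \frac{r_k}{r_w} X_k \rb^{-y\Sigma} = \sum_h \lb -y\Sigma \rb^h \log^h \lb \frac{r_q}{r_w} + \sum_{k \in K} \frac{r_k}{r_w} X_k \rb / h!$, multiply out, and integrate each resulting monomial against $\prod dX_j/X_j$. This recovers precisely the finite integrals $I^q_{h, \vec m}$ of \eqref{eq:Iq} (whose convergence was proved in \pref{lm:I-int-conv}), and reassembling with the prefactor $yD_q \prod_{j \in J} \lb -yD_j \rb$ from \eqref{eq:def-G} and summing over $\lb q, K, J \rb$ reproduces $\widehat{G}|_{D_j \to yD_j}$ verbatim by the definition \eqref{eq:hG}. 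To promote this termwise identity to an asymptotic one, I would truncate the combined Taylor expansion at total order $N$ in $y$ and bound the remainder $R_N(X, y)$ uniformly: Taylor's theorem applied to $X_j^{yD_j}$ (using $X_j^{yD_j} \leq 1$ on $\ld 0, 1 \rd$ for $y D_j \geq 0$) gives $|X_j^{yD_j} - \sum_{m_j \leq N} \lb yD_j \log X_j \rb^{m_j}/m_j!| \leq y^{N+1} |D_j \log X_j|^{N+1}/(N+1)!$, while the analytic $y$-dependence of $\Psi^q_J$ on the compact domain contributes only a uniform polynomial bound in $|\log X|$.

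The main obstacle is showing that the combined remainder factors as $|R_N(X, y)| \leq C_N y^{N+1} \prod_{j \in J} X_j \cdot P_N(|\log X|)$ for some polynomial $P_N$, i.e., that the coordinate-hyperplane vanishing of the $K$-sum---the bound \eqref{eq:b-01} already exploited in \pref{lm:I-int-conv}---persists at every order of the combined Taylor expansion, not only at the leading order. This should follow because $\log \lb \frac{r_q}{r_w} + \sum_{k \in K} \frac{r_k}{r_w} X_k \rb$ depends only on the coordinates $X_k$ with $k \in K$, so for $j \in J \setminus \tau_v$ the pairwise cancellation between the $K = K_0$ and $K = K_0 \sqcup \lc j \rc$ terms at $X_j = 0$ extends unchanged from the proof of \pref{lm:I-int-conv}; the coefficients arising from $\prod X_j^{yD_j}$ are $K$-independent polynomials in $\log X$ and therefore do not disturb the $K$-cancellation; and for $k \in \lb A_w \cap \tau_v \rb \setminus \lc q \rc$ the vanishing at $X_k = 0$ is produced by the explicit factor $X_k^{p_k}$ in $\varphi_{q,K}^{v,w}(X)$. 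Once this structural fact is pinned down and the uniform remainder bound is in hand, the finiteness of the Dirichlet-type moments $\int_{\ld 0, 1 \rd^J} \prod_{j \in J} X_j |\log X_j|^{M_j} \prod_{j \in J} dX_j/X_j$ together with Watson's lemma yields $G(yD) \sim \widehat{G}|_{D_j \to yD_j}$ as $y \to +0$.
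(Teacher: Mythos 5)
Your proposal is correct and follows essentially the same route as the paper: both change variables to $X_j=e^{-s_j}$, view $G(yD)$ as an integral of $g_J(X,y)=\prod_j X_j^{yD_j}h_J(X,y)$ against $\prod_j dX_j/X_j$, apply Taylor's theorem in $y$, and justify the termwise integration by the vanishing of $h_J$ (your $\Psi_J^q$) along each coordinate hyperplane---which, exactly as you argue, comes from the $K_0$ versus $K_0\sqcup\{j\}$ cancellation for $j\in J\setminus\tau_v$ and the explicit $X_k^{p_k}$ factor for $k\in(A_w\cap\tau_v)\setminus\{q\}$---together with the bound $X_j^{yD_j}\le1$ to dominate the remainder by an integrable function of $X$ uniformly in $y$.
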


We also consider the function $I \colon \lb \bR_{> 0} \rb^{|A_w|} \to \bR$ defined by
\begin{align}\label{eq:I-D}
I(D):=
\prod_{j \in A_w} D_j^w 
\prod_{j \in A_{w} } \lb \frac{r_{w}}{r_{j}} \rb^{D_j^w}
\frac{\prod_{j \in A_{w} \setminus \tau_v} \Gamma \lb D_j^w \rb \prod_{j \in A_{w} \cap \tau_v} \Gamma \lb D_j^w+p_j \rb}{\Gamma \lb \sigma^w+l-p_w \rb},
\end{align}
where $\Gamma \lb \bullet \rb$ is the gamma function.
We will show $G(D)=I(D)$ as functions on $\lb \bR_{> 0} \rb^{|A_w|}$ later in \pref{lm:G-I}.
By using the identity $\Gamma(x+m)=\Gamma(x) \cdot \prod_{i=0}^{m-1} (x+i)$ for $x \in \bR_{>0}$ and $m \in \bZ_{\geq 0}$, we can see that \eqref{eq:I-D} is equal to
\begin{align}\label{eq:I-D2}
\prod_{j \in A_{w} } \lb \frac{r_{w}}{r_{j}} \rb^{D_j^w}
\lb \prod_{j \in A_w \cap \tau_v} \prod_{i=0}^{p_j-1} \lb D_j^w +i \rb \rb
\lb \prod_{i=1}^{l-p_w-1} \frac{1}{\sigma^w +i} \rb
\frac{\prod_{j \in A_{w}} \Gamma \lb D_j^w+1 \rb}{\Gamma \lb \sigma^w+1 \rb}.
\end{align}
Let $\widehat{I}$ denote the formal power series in the variables $\lc D_j^w \rc_{j \in A_w}$, which one can get by applying the power series expansion \eqref{eq:gamma-exp} of $\Gamma(1+x)$ to \eqref{eq:I-D2}.
For a fixed $D \in \lb \bR_{> 0} \rb^{|A_w|}$, we have the asymptotic expansion (Taylor expansion)
\begin{align}
I(yD) \sim \left. \widehat{I} \right|_{D_j \to yD_j} \quad (y \to +0),
\end{align}
where $\left. \widehat{I} \right|_{D_j \to yD_j}$ also means the substitution of $yD_j$ to $D_j$ in $\widehat{I}$.
By combining this, \pref{lm:G-asy}, and the equality $G(D)=I(D)$ (\pref{lm:G-I}), we obtain $\widehat{G}=\widehat{I}$ as formal power series in $\lc D_j^w \rc_{j \in A_w}$.
By substituting $\widehat{I}$ to $\widehat{G}$ in \eqref{eq:int-G}, we obtain \pref{th:main1} in the case of $\conv \lb \lc w \rc \cup \tau_v \rb \in \scrT$.

\begin{lemma}{\rm(cf.~\cite[Section 4.2]{MR4194298})}\label{lm:G-I}
One has $G(D)=I(D)$ as functions on $\lb \bR_{> 0} \rb^{|A_w|}$.
\end{lemma}
\begin{proof}
In \eqref{eq:def-G}, one can interchange the integration and summation concerning $K$ because of the factor $\exp \lb -\sum_{j \in J} s_j D_j^w \rb$.
By doing it and integrating with respect to $s_j$ $\lb j \in J \setminus K \rb$, it turns out that \eqref{eq:def-G} is equal to
\begin{align}\label{eq:G2}
\begin{aligned}
\sum_{\lc q \rc \sqcup J \subset A_w}
\sum_{K \subset J}
\lb -1 \rb^{| J \setminus K |} D_{q}^w
\prod_{k \in K} D_k^w
\int_{\ld 0, \infty \rb^K} e^{-\sum_{k \in K} s_k D_k^w}
\lb \frac{r_q}{r_{w}} + \sum_{k \in K} \frac{r_k}{r_{w}} e^{-s_k} \rb^{-\sigma^w}
\varphi_{q, K}^{v, w} \lb s \rb
ds^K.
\end{aligned}
\end{align}
For a subset $K \subset A_w \setminus \lc q \rc$ such that $\lc w, q \rc \sqcup K$ contains all vertices of $\tau_v$, one has
\begin{align}
\sum_{K \subset J \subset A_w \setminus \lc q \rc } \lb -1 \rb^{| J \setminus K |}
=
\left\{ \begin{array}{ll}
1 & K=A_w \setminus \lc q \rc \\
0 & \mathrm{otherwise},
\end{array} 
\right.
\end{align}
where the sum is taken over $J$.
Hence, \eqref{eq:G2} is equal to
\begin{align}
\begin{aligned}
\prod_{k \in A_w} D_k^w
\sum_{\lc q \rc \sqcup K= A_w}
\int_{\ld 0, \infty \rb^K} e^{-\sum_{k \in K} s_k D_k^w}
\lb \frac{r_q}{r_{w}} + \sum_{k \in K} \frac{r_k}{r_{w}} e^{-s_k} \rb^{-\sigma^w}
\frac{r_q^{p_q} \prod_{k \in \tau_v \cap K} \lb r_k e^{-s_k} \rb^{p_k}}{\lb r_q+\sum_{k \in K} r_k e^{-s_k} \rb^{l-p_w} }ds^K.
\end{aligned}
\end{align}
One can write
\begin{align}\label{eq:G-H}
G(D)=\frac{\prod_{k \in \tau_v \cap A_w} r_k^{p_k}}{\lb r_{w}\rb^{l-p_w}}
\prod_{k \in A_w} D_k^w \cdot H(D)
\end{align}
with
\begin{align}
H(D):=\sum_{\lc q \rc \sqcup K= A_w}
\int_{\ld 0, \infty \rb^K} e^{-\sum_{k \in K \setminus \tau_v} s_k D_k^w} \cdot
e^{-\sum_{k \in \tau_v \cap K} s_k \lb D_k^w+p_k \rb}
\lb \frac{r_q}{r_{w}} + \sum_{k \in K} \frac{r_k}{r_{w}} e^{-s_k} \rb^{-\sigma^w-l+p_w} ds^K.
\end{align}

In order to compute $H(D)$, we consider the \emph{tropical projective space} $\bT P^{| A_{w}|-1}$ defined by
\begin{align}
\bT P^{| A_{w}|-1}:=\left. \lb \lb \bR_{\geq 0} \rb^{| A_{w}|} \setminus \lc 0 \rc \rb \middle/ \bR_{>0} \right.,
\end{align}
where $\bR_{>0}$ acts on $\lb \bR_{\geq 0} \rb^{| A_{w}|} \setminus \lc 0 \rc$ diagonally by scalar multiplication.
Let $\lc u_j \rc_{j \in A_w}$ denote its homogeneous coordinates.
We also consider the forms
\begin{align}
\prod_{j \in A_w \setminus \lc q \rc} d \log \frac{u_j}{u_q}=\prod_{j \in A_w \setminus \lc q \rc} d \log t_j
\end{align}
with $q \in A_w$, each of which is defined on the affine chart
\begin{align}\label{eq:aff-chart}
\bT P^{| A_{w}|-1} \setminus \lc u_q=0 \rc  \xrightarrow{\cong}  \lb \bR_{\geq 0} \rb^{| A_{w}|-1}, \quad \lc u_j \rc_{j \in A_w} \mapsto \lc t_j:=\frac{u_j}{u_q} \rc_{j \in A_w \setminus \lc q \rc}.
\end{align}
They agree with each other on the overlap, and define a volume form on $\bT P^{| A_{w}|-1}$, which will be denoted by $d \vol$ (cf.~\cite[Section 4.2]{MR4194298}).
One can show
\begin{align}\label{eq:int-pr}
H(D)
=
\int_{\bT P^{| A_{w}|-1}} 
\frac{\lb \prod_{j \in A_{w}} u_j^{D_j^w} \rb \cdot \lb \prod_{j \in A_{w} \cap \tau_v} u_j^{p_j} \rb}{\lb \sum_{j \in A_{{w}}} \frac{r_j}{r_{w}} u_j \rb^{\sigma^w+l-p_w}}
d \vol
\end{align}
in the same way as \cite[Lemma 4.2]{MR4194298}.
We rewrite the right hand side of \eqref{eq:int-pr} as an integral over the simplex 
\begin{align}
\nabla':=\lc \lc u_j \rc_{j \in A_w} \in \lb \bR_{\geq 0} \rb^{| A_{w}|} \setminus \lc 0 \rc \relmid \sum_{j \in A_{w}} \frac{r_j}{r_{w}} u_j=1\rc,
\end{align}
which is a slice of the diagonal action on $\lb \bR_{\geq 0} \rb^{| A_{w}|} \setminus \lc 0 \rc$.
On the simplex $\nabla'$, we have
\begin{align}
\frac{du_q}{u_q} =\frac{-\sum_{j \in A_w \setminus \lc q \rc} r_j du_j}{r_w-\sum_{j \in A_w \setminus \lc q \rc} r_j u_j}
\end{align}
and 
\begin{align}
d \vol&=\prod_{j \in A_w \setminus \lc q \rc} \lb \frac{d u_j}{u_j}-\frac{d u_q}{u_q} \rb
=
\lb 1+\frac{\sum_{j \in A_w \setminus \lc q \rc} r_j u_j}{r_w-\sum_{j \in A_w \setminus \lc q \rc} r_j u_j}\rb
\prod_{j \in A_w \setminus \lc q \rc} \frac{d u_j}{u_j}
=\frac{r_w}{r_q u_q} \prod_{j \in A_w \setminus \lc q \rc} \frac{d u_j}{u_j},
\end{align}
where $q \in A_w$.
Hence, we get
\begin{align}
\int_{\bT P^{| A_{w}|-1}} 
\frac{\lb \prod_{j \in A_{w}} u_j^{D_j^w} \rb \cdot \lb \prod_{j \in A_{w} \cap \tau_v} u_j^{p_j} \rb}{\lb \sum_{j \in A_{{w}}} \frac{r_j}{r_{w}} u_j \rb^{\sigma^w+l-p_w}}
d \mathrm{vol}=
\frac{r_{w}}{r_{q}}
\int_{\nabla'} 
\lb \prod_{j \in A_{w}} u_j^{D_j^w-1} \rb
\lb \prod_{j \in A_{w} \cap \tau_w} u_j^{p_j} \rb
\prod_{j \in A_{w} \setminus \lc q \rc} du_j.
\end{align}
If we change the variables to $v_j:=\frac{r_j}{r_{w}} u_j$, then this is equal to
\begin{align}
\prod_{j \in A_{w} } \lb \frac{r_{w}}{r_{j}} \rb^{D_j^w}
\prod_{j \in A_{w} \cap \tau_v} \lb \frac{r_{w}}{r_{j}} \rb^{p_j}
\int_{\nabla} 
\lb \prod_{j \in A_{w}} v_j^{D_j^w-1} \rb
\lb \prod_{j \in A_{w} \cap \tau_v} v_j^{p_j} \rb
\prod_{j \in A_{w} \setminus \lc q \rc} dv_j,
\end{align}
where $\nabla:=\lc \lc v_j \rc_{j \in A_w} \in \lb \bR_{\geq 0} \rb^{| A_{w}|} \setminus \lc 0 \rc \relmid \sum_{j \in A_{w}} v_j=1\rc$.
By using the Dirichlet integral \cite{Lejeune1839}, we obtain
\begin{align}
H(D)=\prod_{j \in A_{w} } \lb \frac{r_{w}}{r_{j}} \rb^{D_j^w}
\prod_{j \in A_{w} \cap \tau_v} \lb \frac{r_{w}}{r_{j}} \rb^{p_j}
\frac{\prod_{j \in A_{w} \setminus \tau_v} \Gamma \lb D_j^w \rb \prod_{j \in A_{w} \cap \tau_v} \Gamma \lb D_j^w+p_j \rb}{\Gamma \lb \sigma^w+l -p_w \rb}.
\end{align}
By this and \eqref{eq:G-H}, we get \pref{lm:G-I}.
\end{proof}

\subsection{The volume of $S_t^{w}$}

Lastly, we need to show the following lemma that we used to get \eqref{eq:int2}.

\begin{lemma}{\rm(cf.~\cite[Section 5.4]{MR4194298})}\label{lm:vol}
The volume of $S_t^{w}$ with respect to the Riemannian metric induced from the Euclidean metric on the ambient space $N_\bR \times D_\varepsilon$ is bounded as $t \to +0$.
\end{lemma}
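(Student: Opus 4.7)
The strategy is to decompose $R_t$ according to the stratification of $\partial \nabla^{w}$ and parametrize $S_t^{w}$ using coordinates adapted to each stratum. The geometric picture is that $R_t$ is a tubular neighborhood of $\partial \nabla^{w}$ of transverse width $O(1/|\log t|)$, so that while the $(d+2)$-dimensional volume of $R_t \times S_{\varepsilon_0}^1$ shrinks as $t \to +0$, the $(d+1)$-dimensional hypersurface $S_t^{w}$ inside it is essentially a graph over a bounded subset of $\partial \nabla^{w} \times S_{\varepsilon_0}^1$ and hence has uniformly bounded volume.

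For each simplex $\tau \in \scrT$ with $\tau \ni w$ and $\dim \tau \geq 1$, let $F_\tau \in \scrP$ denote the cell dual to $\tau$, which is a face of $\nabla^{w}$ of dimension $d+1-\dim \tau$, and let $R_t^\tau \subset R_t$ be a neighborhood of the relative interior of $F_\tau$ on which the dominant monomials of $\tilde{f}_t^{w}$ are exactly those indexed by $\tau \setminus \lc w \rc$. The collection $\lc R_t^\tau \rc_\tau$ covers $R_t$. On each $R_t^\tau$ the plan is to use an integral affine coordinate system $\lb y_1, \ldots, y_{d+1-\dim \tau}, \xi_1, \ldots, \xi_{\dim \tau} \rb$ on $N_\bR$ in which $\xi_i := \mu_{m_i}-\mu_{w}$ for the non-$w$ vertices $m_1, \ldots, m_{\dim \tau}$ of $\tau$ and the $y_j$ are complementary coordinates chosen so that the whole system is integral affine (possible since $\scrT$ is unimodular). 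Then $F_\tau = \lc \xi_1 = \cdots = \xi_{\dim \tau} = 0 \rc$ and on $R_t^\tau$ each $\xi_i$ is constrained to an interval of length $O(1/|\log t|)$.

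Writing the defining equation $\tilde{f}_t^{w}(i_t(n))=|1+x|$ in these coordinates with $x=\varepsilon_0 e^{\sqrt{-1}\theta}$, I would solve for $\xi_1$ as a smooth function of $\lb y, \xi_2, \ldots, \xi_{\dim \tau}, \theta \rb$ by the implicit function theorem. This relies on a variant of \pref{lm:grad} applied to $\tilde{f}_t^{w}$ in place of $g_t^{w}$, yielding $\left| \partial \tilde{f}_t^{w}/\partial \xi_1 \right| \geq C |\log t|$ for some constant $C>0$ on $R_t^\tau$; combined with $\left| \partial \tilde{f}_t^{w}/\partial y_j \right|, \left| \partial \tilde{f}_t^{w}/\partial \xi_i \right| = O(|\log t|)$ and the fact that the $\theta$-derivative of $|1+\varepsilon_0 e^{\sqrt{-1}\theta}|$ is $O(1)$, one obtains $\partial \xi_1/\partial y_j, \partial \xi_1/\partial \xi_i = O(1)$ and $\partial \xi_1/\partial \theta = O(1/|\log t|)$. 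Therefore the pullback of the Euclidean volume form on $N_\bR \times D_\varepsilon$ to $S_t^{w} \cap \lb R_t^\tau \times S_{\varepsilon_0}^1 \rb$ under the parametrization $\lb y, \xi_2, \ldots, \xi_{\dim \tau}, \theta \rb \mapsto \lb n, \varepsilon_0 e^{\sqrt{-1}\theta} \rb$ is uniformly bounded in $t$.

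Integrating then gives a contribution of order $|\log t|^{1-\dim \tau}$ from each $R_t^\tau$: the $y$-range is contained in a compact subset of $F_\tau$, each of the remaining $\dim \tau - 1$ variables $\xi_i$ contributes a factor $O(1/|\log t|)$, and $\theta$ ranges over $[0, 2\pi)$. Summing over the finitely many $\tau$, the facets of $\nabla^{w}$ (those with $\dim \tau = 1$) contribute $O(1)$ while strata of higher codimension ($\dim \tau \geq 2$) contribute $o(1)$, establishing the required uniform bound. The main technical obstacle is to verify the lower bound $\left| \partial \tilde{f}_t^{w}/\partial \xi_1 \right| \geq C|\log t|$ uniformly in $t$ on each $R_t^\tau$ and to organize the cover $\lc R_t^\tau \rc_\tau$ into a partition of unity avoiding double-counting of overlaps; both steps should proceed by arguments directly parallel to those used in the proof of \pref{lm:grad}.
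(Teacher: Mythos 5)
Your approach is correct in outline but takes a genuinely different route from the paper. The paper's proof is a short, self-referential reduction: it bounds $|d\tilde f_t^{w,\varepsilon_0}|$ by $O(-\log t)$, factors the volume integral via the Leray form and Fubini over $\theta$, and then observes that the inner integral $\int_{S_t^{w,\theta}} \bigwedge_i dn_i / d(\tilde f_t^w \circ i_t)$ equals (up to a constant) $(-\log t)^{-d-1}\int_{C_t^0}\Omega_t^{1,0}$ for the modified polynomial $g_t^\theta = \tilde f_t^w / |1+\varepsilon_0 e^{\sqrt{-1}\theta}| - 1$ with $l=1$, $v=w=0$. By \pref{th:main} this is $O((-\log t)^{-1})$, which cancels against the $(-\log t)$ from the first step. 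The paper must then explain why this is not circular: the cycle $C_t^0 = i_t(S_t^{w,\theta})$ is the honest positive real locus of $\{g_t^\theta=0\}$, so no perturbation $\delta_t$ is needed and \pref{lm:vol} never enters the proof of \eqref{eq:period1} in that instance.

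Your proposal instead gives a direct geometric estimate, stratifying $R_t$ along $\partial\nabla^w$, parametrizing $S_t^w$ over each stratum by the $(d+1-\dim\tau)$ tangential affine coordinates $y$, the $(\dim\tau-1)$ remaining transverse coordinates $\xi_i$, and $\theta$, and bounding the Jacobian using a gradient lower bound in the $\xi_1$-direction. Each stratum of $\dim\tau = k$ then contributes $O(|\log t|^{1-k})$, and the sum is bounded. This is more elementary and sidesteps the circularity issue entirely, at the cost of being longer and requiring a fresh estimate. The two technical gaps you flag are real but fillable: the lower bound $|\partial(\tilde f_t^w\circ i_t)/\partial\xi_1| \geq C|\log t|$ follows by the same computation as in \pref{lm:grad} because all $\xi_i$ are $O(1/|\log t|)$ on $R_t^\tau$, hence $t^{\xi_i}$ is bounded above and below by absolute constants; and for the cover one can simply take the $R_t^\tau$ to be pairwise disjoint up to measure zero, in the style of the sets $S_t^{w,q,K}$ already used in \pref{sc:integral}, rather than a partition of unity. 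One small phrasing issue: you speak of the ``pullback of the Euclidean volume form,'' but what you need (and clearly intend) is the Riemannian volume form of the induced metric, computed as the Gram determinant of the parametrization, and also to note that passing to integral-affine coordinates changes the Euclidean volume only by a $t$-independent constant since the transition is in $\mathrm{GL}(d+1,\bZ)$.
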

\begin{proof}
We define the function $\tilde{f}_t^{w, \varepsilon_0} \colon N_\bR \times S^1_{\varepsilon_0} \to \bR$ by
\begin{align}
\tilde{f}_t^{w, \varepsilon_0} \lb n, \varepsilon_0 e^{\sqrt{-1}\theta} \rb
:=\tilde{f}_t^{w} \lb i_t (n) \rb-\left| 1+\varepsilon_0 e^{\sqrt{-1}\theta} \right|.
\end{align}
Then the volume form on $S_t^{w}$ is given by
\begin{align}
\left| d \tilde{f}_t^{w, \varepsilon_0} \right| \frac{\bigwedge_{i=0}^d dn_i \wedge \varepsilon_0 d \theta}{d \tilde{f}_t^{w, \varepsilon_0} },
\end{align}
where $(n_0, \cdots, n_{d})$ are $\bR$-coordinates on $N_\bR \cong \bR^{d+1}$.
We can easily see
\begin{align}
\frac{\partial \tilde{f}_t^{w, \varepsilon_0}}{\partial n_i}
=\frac{\partial \lb \tilde{f}_t^{w} \circ i_t \rb}{\partial n_i} 
=O \lb -\log t \rb, \quad 
\frac{\partial \tilde{f}_t^{w, \varepsilon_0}}{\partial \theta}=O \lb 1 \rb.
\end{align}
Hence, we have
\begin{align}\label{eq:vol-est}
\int_{S_t^{w}} \left| d \tilde{f}_t^{w, \varepsilon_0} \right| \frac{\bigwedge_{i=0}^d dn_i \wedge \varepsilon_0 d \theta}{d \tilde{f}_t^{w, \varepsilon_0} }
\leq
C (-\log t) \int_{S_t^{w}} \frac{\bigwedge_{i=0}^d dn_i \wedge d \theta}{d \tilde{f}_t^{w, \varepsilon_0} }
=
C (-\log t)
\int_0^{2 \pi} d \theta
\int_{S_t^{w, \theta}} \frac{\bigwedge_{i=0}^d dn_i}{d \lb \tilde{f}_t^{w} \circ i_t \rb}
\end{align}
for some constant $C>0$, where $S_t^{w, \theta}:=\lc n \in N_\bR \relmid \tilde{f}_t^w \lb i_t \lb n \rb \rb=\left| 1+\varepsilon_0 e^{\sqrt{-1}\theta} \right| \rc$.

We fix $\theta \in S^1$ and apply \pref{th:main1} to the polynomial $g^\theta_t$ defined by
\begin{align}
g^\theta_t:=\frac{1}{\left| 1+\varepsilon_0 e^{\sqrt{-1}\theta} \right|} \tilde{f}_t^{w} -1.
\end{align}
If we set $l=1, v=w=0$, then 
\begin{align}
\omega_t^{1, 0}=
-\lb \bigwedge_{i=0}^{d} \frac{dz_i}{z_i} \rb
\frac{1}{g^\theta_t}, \quad
C_t^{0} =i_t \lb S_t^{w, \theta} \rb, \quad
\int_{C_t^{0}} \Omega_t^{1, 0} =O \lb \lb -\log t \rb^{d} \rb.
\end{align}
Hence, we get
\begin{align}
\int_{C_t^{0}} \Omega_t^{1, 0} 
=
-\lb \log t \rb^{d+1}
\int_{S_t^{w, \theta}}  
\frac{\bigwedge_{i=0}^d dn_i}{d \lb g_t^\theta \circ i_t \rb} 
=
O \lb \lb -\log t \rb^{d} \rb.
\end{align}
From this and $d \lb \tilde{f}_t^{w} \circ i_t \rb=\left| 1+\varepsilon_0 e^{\sqrt{-1}\theta} \right| \cdot d \lb g_t^\theta \circ i_t \rb$, we can see that \eqref{eq:vol-est} is bounded by a constant.
We obtained the lemma.

Indeed we used this lemma in order to prove \pref{th:main1}, when we evaluate the effect of the perturbation $\delta$ on the period integral in \eqref{eq:int2}.
However, the perturbation $\delta$ is unnecessary for the above integral $\int_{C_t^{0}} \Omega_t^{1, 0}$, since the cycle $C_t^{0}$ is given as the positive real locus of $\lc g_t^\theta =0 \rc$, and we do not need to consider its transport in \eqref{eq:ch-family} for constructing $C_t^{0}$.
Therefore, there is no circular reasoning.
\end{proof}

\section{Proof of \pref{th:main2}}\label{sc:integral2}

Let $\sigma \in \scrP$ be a cell of dimension $d$ in the tropical hypersurface $X\lb \trop \lb f \rb \rb$ whose dual edge $\tau_\sigma$ in $\scrT$ contains an element in $W$ as its vertex.
Let $w$ denote the vertex of $\tau_\sigma$ in $W$, and $m_0 \in A$ denote the other vertex.
(The element $m_0$ may also be in $W$.)
They satisfy $\trop(f)=\mu_w=\mu_{m_0}$ on $\sigma$.
As before, we define $\alpha:=\mu_{m_0}-\mu_{w}$, and  take a collection of integral linear functions $\lc \gamma_j \rc_{j \in J}$ so that $\lc \alpha, \gamma_j \relmid j \in J \rc$ forms an integral affine coordinate system on $N_\bC$.
We use the corresponding coordinate system on $N_{\bC^\ast}$ given by
\begin{align}
y:=t^{\lambda_{m_0}-\lambda_{w}} z^{m_0-w}, \quad x_j:=z^{\gamma_j}\quad (j \in J).
\end{align}

Take a point $n_0 \in \Int \lb \sigma \rb$ and its small neighborhood $U$ in $N_\bR$ such that $\mu_m(n) -\mu_{m_0}(n) \geq \epsilon$ for any $n \in U$ and $m \in A \setminus \lc w, m_0 \rc$.
We set
\begin{align}
\tilde{Z}_t:= \lc (z, x) \in N_{\bC^\ast} \times D_\varepsilon \relmid f_t^w(z)=1+x \rc,
\end{align}
and consider the map $\Log_t \colon N_{\bC^\ast} \to N_\bR$ of \eqref{eq:Log}.
On $\tilde{Z}_t \cap \lb \Log_t^{-1} (U) \times D_\varepsilon \rb$, we have 
\begin{align}\label{eq:sigma-eq}
1+x=-\frac{c_{m_0}}{c_{w}}y \lb 1+h_t \lb z \rb \rb,
\end{align}
where $h_t(z) =O \lb t^{\epsilon} \rb$.
By the implicit function theorem, we can see that this equation can be used to write $y$ as a function $y(x_j, x)$ of the variables $x_j, x$.
We consider the map
\begin{align}
i_t \colon \lb S^1\rb^d \times D_\varepsilon \to \tilde{Z}_t \cap \lb \Log_t^{-1} (U) \times D_\varepsilon \rb, \quad (e^{\sqrt{-1}\theta_1}, \cdots, e^{\sqrt{-1}\theta_d}, x) \mapsto 
\lb y(x_j, x), x_j, x \rb,
\end{align}
where $x_j=t^{\gamma_j(n_0)} e^{\sqrt{-1}\theta_j}$.
We set
\begin{align}
S^\sigma_t&:=i_t \lb \lb S^1\rb^d \times S_{\varepsilon_0}^1 \rb \subset 
\lb \Log_t^{-1}(U) \setminus Z_t \rb \times S_{\varepsilon_0}^1 \\
T^\sigma_t&:=i_t \lb \lb S^1\rb^d \times \lc 0 \rc \rb \subset Z_t \times \lc 0 \rc.
\end{align}
$T^\sigma_t$ is the cycle mentioned in \pref{sc:main}, and $S^\sigma_t$ is a tube over it.

Suppose that $\tau_v \in \scrF$ is either $\tau_\sigma$ or one of the vertices $w, m_0$.
One can write $v=p_{w} w +p_{m_0} m_0$ with $p_{w}, p_{m_0} \in \bZ \cap \ld 0, l \rd$ such that $p_{w}+p_{m_0}=l$.
By \eqref{eq:sigma-eq}, one can get
\begin{align}
\omega_t^{l, v}
&= \lb 1+ O \lb t^\epsilon \rb \rb \cdot
\lb -1 \rb^{l} \cdot 
\lb \frac{c_{m_0}}{c_{w}} \rb^{p_{m_0}}
\cdot y^{p_{m_0}}
\frac{1}{\lb f_t^w-1\rb^l}
\frac{dy}{y}
\bigwedge_{j \in J} \frac{dx_j}{x_j} \\
&= \lb 1+ O \lb t^\epsilon \rb \rb 
\cdot \lb -1 \rb^{p_{w}}
\lb 1+x \rb^{p_{m_0}-1} \frac{dx}{x^l}
\bigwedge_{j \in J} \frac{dx_j}{x_j}
\end{align}
on $\tilde{Z}_t \cap \lb \Log_t^{-1} (U) \times D_\varepsilon \rb$.
Hence, we get
\begin{align}
\begin{split}
\int_{T_t^\sigma} \Omega_t^{l ,v}
=\frac{1}{2 \pi \sqrt{-1}} \int_{S_t^\sigma} \omega_t^{l ,v}
&=\lb 2 \pi \sqrt{-1} \rb^d \cdot \lb -1 \rb^{p_w} \cdot \binom{p_{m_0}-1}{l-1} + O \lb t^\epsilon \rb \\ \label{eq:int-pi}
&=
\left\{ \begin{array}{ll}
-\lb 2 \pi \sqrt{-1} \rb^d + O \lb t^\epsilon \rb & \tau_v=w \\
\lb 2 \pi \sqrt{-1} \rb^d + O \lb t^\epsilon \rb & \tau_v=m_0 \\
O \lb t^\epsilon \rb & \mathrm{otherwise.}
  \end{array} 
\right.
\end{split}
\end{align}

Next, suppose that $\tau_v \in \scrF$ is neither $\tau_\sigma$ nor the vertices $w, m_0$.
One can write $v= \sum_{m \in A \cap \tau_v} p_m \cdot m$ with $p_m \in \bZ \cap \lb 0, l \rd$ such that $\sum_{m \in A \cap \tau_v} p_m =l$.
Again by \eqref{eq:sigma-eq}, one can get
\begin{align}
\omega_t^{l, v}
&= \lb 1+ O \lb t^\epsilon \rb \rb \cdot
\prod_{m \in A \cap \tau_v} 
\lb - \frac{c_m}{c_{w}} \rb^{p_m}
\lb \frac{t^{\lambda_m} z^m}{t^{\lambda_w}z^w} \rb^{p_m}
\cdot
\frac{1}{\lb f_t^w-1\rb^l}
\frac{dy}{y}
\bigwedge_{j \in J} \frac{dx_j}{x_j} \\
&= \lb 1+ O \lb t^\epsilon \rb \rb \cdot
\prod_{m \in A \cap \tau_v} 
\lb - \frac{c_m}{c_{w}} \rb^{p_m}
\cdot
\lb \frac{t^{\lambda_m} z^m}{t^{\lambda_w}z^w} \rb^{p_m}
\cdot 
\frac{dx}{x^l \lb 1+x \rb}
\bigwedge_{j \in J} \frac{dx_j}{x_j}
\end{align}
on $\tilde{Z}_t \cap \lb \Log_t^{-1} (U) \times D_\varepsilon \rb$.
On $\Log_t^{-1} (U)$, we have $t^{\lambda_m} z^m /t^{\lambda_w}z^w=O \lb t^\delta \rb$ for some $\delta >0$, for any $m \in A \setminus \lc m_0, w \rc$.
Hence, after replacing $\epsilon$ with a smaller one if necessary, we have
\begin{align}
\omega_t^{l, v}
=
O \lb t^\epsilon \rb
\cdot 
\frac{dx}{x^l \lb 1+x \rb}
\bigwedge_{j \in J} \frac{dx_j}{x_j}.
\end{align}
Therefore, we obtain
\begin{align}
\int_{T_t^\sigma} \Omega_t^{l ,v}
=\frac{1}{2 \pi \sqrt{-1}} \int_{S_t^\sigma} \omega_t^{l ,v}
=O \lb t^\epsilon \rb.
\end{align}

If $m_0 \nin W$, then the second case $\tau_v=m_0$ in \eqref{eq:int-pi} can not happen since $v$ is in the interior of $l \cdot \Delta$.
Therefore, $\int_{T_t^\sigma} \Omega_t^{l ,v}=O \lb t^\epsilon \rb$ unless $\tau_v \in W$ and $\trop(f)=\mu_{\tau_v}$ on $\sigma$.
If $\tau_v \in W$ and $\trop(f)=\mu_{\tau_v}$ on $\sigma$, then $\tau_v$ is a vertex of $\tau_\sigma$, and one can suppose $w=\tau_v$.
In this case, we get $\int_{T_t^\sigma} \Omega_t^{l ,v}=-\lb 2 \pi \sqrt{-1} \rb^d + O \lb t^\epsilon \rb$ by \eqref{eq:int-pi}.
We obtained \pref{th:main2}.

\begin{remark}\label{rm:orientation}
The orientation of $T_t^\sigma$ that we used is the one determined by the above ordered coordinates $\lc \theta_j \rc_{j \in J}$.
In the above computation, if $m_0 \in W$ and we swap $w$ and $m_0$, then the orientation of $T_t^\sigma$ also switches.
The orientation of $B_t^w$ that we used when we compute the integral over $C_t^w$ is the one defined by the interior product of the standard volume form on $N_\bR$ with an incoming normal vector field on it.
When we choose orientations of the cycles in these ways, the intersection number of $T_t^\sigma$ and $C_t^w$ is 
\begin{align}
\frac{\lb \bigwedge_{j \in J} d\theta_j \rb \wedge \lb \bigwedge_{j \in J} d\gamma_j \rb}{\bigwedge_{j \in J} \lb d\theta_j \wedge d \gamma_j \rb}=(-1)^{d(d-1)/2}.
\end{align}
Notice that since $d \lb t^{\gamma_j}\rb=t^{\gamma_j} \log t \cdot d \gamma_j$ and $t^{\gamma_j} \log t<0$, the standard orientation of $Z_t \cap \Log_t^{-1} (U)$ is given by $\bigwedge_{j \in J} \lb d\theta_j \wedge d \gamma_j \rb$.
\end{remark}

\section{Leading terms of periods}\label{sc:lead}

From \pref{th:main1}, we can see that the affine volumes of bounded cells in the tropical hypersurface $X\lb \trop \lb f \rb \rb$ appear in the leading terms of periods $\int_{C_t^{w}}  \Omega_t^{l, v}$.
Suppose $\conv \lb \lc w \rc \cup \tau_v \rb \in \scrT$.
For $m \in A_w$, let $\sigma_m \in \scrP$ be the cell of dimension $d$ in the tropical hypersurface $X\lb \trop \lb f \rb \rb$, which is dual to $\conv \lb \lc w, m \rc \rb \in \scrT$.
We also let $\sigma_v \in \scrP$ denote the cell that is dual to $\conv \lb \lc w \rc \cup \tau_v \rb \in \scrT$.

\begin{corollary}
The leading term of $\int_{C_t^{w}}  \Omega_t^{l, v}$ is given by
\begin{align}\label{eq:lead1}
(-1)^{d+1}
\lb - \log t \rb^d
\cdot 
\sum_{m \in A_{w}} 
\vol \lb \sigma_m \rb
\end{align}
when $\tau_v=w$, and by
\begin{align}\label{eq:lead2}
(-1)^{d}
\lb - \log t \rb^{\dim \sigma_v}
\cdot
\frac{\prod_{j \in A_{w} \cap \tau_v} 
(p_j-1)! }{(l-1)!}
\vol \lb \sigma_v \rb
\end{align}
when $w \nin \tau_v$, where $\vol$ denotes the affine volume of the polytope.
\end{corollary}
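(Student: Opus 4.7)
The plan is to read off the leading behavior of the asymptotic expansion supplied by \eqref{eq:period1} of \pref{th:main}. As $t \to +0$, both $\prod_{m \in A_w}\lb -c_m/c_w \rb^{-D_m^w}$ and $\widehat{\Gamma}_w$ are $t$-independent with constant (degree-zero) component equal to $1$, while
\begin{align*}
t^{-\omega_\lambda^w} = \sum_{k \geq 0}\frac{\lb -\log t \rb^k}{k!}\lb \omega_\lambda^w \rb^k.
\end{align*}
Since only the degree $2(d+1)$ part of the integrand contributes to $\int_{Y_w}$, the highest power of $\lb -\log t \rb$ that appears equals $d+1-e_{\min}$, where $e_{\min}$ denotes the minimal cohomological degree of a non-vanishing monomial in $E_{v,w}$; the corresponding leading coefficient comes from combining $\lb \omega_\lambda^w \rb^{d+1-e_{\min}}/\lb d+1-e_{\min} \rb!$ with the lowest-degree piece of $E_{v,w}$ and the constant parts of the remaining factors. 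The remainder $O \lb t^\epsilon \rb$ in \eqref{eq:period1} decays faster than any power of $\lb -\log t \rb^{-1}$, so it is indeed negligible compared to this leading term.

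The key auxiliary input is the classical identification of intersection numbers on $Y_w$ with affine volumes of faces of the moment polytope $\nabla^w$: for any $\tau \in \scrT$ with $\tau \ni w$, setting $S := \tau \setminus \lc w \rc$, the dual cell $\sigma_\tau \in \scrP$ is the face of $\nabla^w$ associated with the cone $\sum_{m \in S}\bR_{\geq 0}(m-w) \in \Sigma_w$ and arises as the moment polytope of the toric stratum $\bigcap_{m \in S}D_m^w \subset Y_w$ with polarization $\omega_\lambda^w$. Consequently
\begin{align*}
\vol \lb \sigma_\tau \rb = \frac{1}{\lb \dim\sigma_\tau \rb!}\int_{Y_w}\lb \omega_\lambda^w \rb^{\dim\sigma_\tau}\prod_{m \in S}D_m^w,
\end{align*}
which is the same moment-polytope volume formula already invoked in the proof of \pref{lm:DH}.

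It remains to identify the lowest-degree piece of $E_{v,w}$ in each case. In Case 1 ($\tau_v = w$) one has $p_w = l$ and $A_w \cap \tau_v = \emptyset$, so $E_{v,w} = \prod_{i=0}^{l-1}\lb \sigma^w - i \rb$ has vanishing constant term and degree-one part $(-1)^{l-1}(l-1)!\,\sigma^w = (-1)^{l-1}(l-1)!\sum_{m \in A_w}D_m^w$; substituting into \eqref{eq:period1} and applying the volume identity to each facet $\sigma_m$ of $\nabla^w$ collapses the signs and factorials to $(-1)^{d+1}$ and yields \eqref{eq:lead1}. In Case 2 ($w \notin \tau_v$) one has $p_w = 0$ so the $\sigma^w$-product is empty, and $\conv \lb \lc w \rc \cup \tau_v \rb \in \scrT$ forces $A_w \cap \tau_v = \tau_v$; thus $E_{v,w} = \prod_{m \in \tau_v}\prod_{i=0}^{p_m-1}(D_m^w + i)$ has lowest-degree piece $\big(\prod_{m \in \tau_v}(p_m-1)!\big)\prod_{m \in \tau_v} D_m^w$ of degree $\dim\tau_v + 1$, so the leading power of $\lb -\log t \rb$ is $\dim\sigma_v = d-\dim\tau_v$, and the volume identity with $\tau = \conv \lb \lc w \rc \cup \tau_v \rb$ and $S = \tau_v$ produces \eqref{eq:lead2}. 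Nothing here is analytically delicate; the argument is a purely combinatorial/intersection-theoretic extraction from \pref{th:main}, and the only non-mechanical step is the moment-polytope volume identity.
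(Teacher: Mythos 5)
Your proposal is correct and follows essentially the same route as the paper's own proof: read off the leading power of $(-\log t)$ from \eqref{eq:period1} by taking the lowest‑degree nonzero piece of $E_{v,w}$ (degree $1$ in Case~$1$, degree $\dim\tau_v+1$ in Case~$2$, since the constant term vanishes in both cases), pair it with the appropriate power of $\omega_\lambda^w$ from $t^{-\omega_\lambda^w}$, note that the $\widehat{\Gamma}_w$ and argument factors have constant part $1$, and then convert the intersection number $\int_{Y_w}(\omega_\lambda^w)^{\dim\sigma_\tau}\prod_{m\in S}D_m^w$ into the affine volume of the corresponding face of $\nabla^w$ via the moment‑polytope formula \eqref{eq:vol-sig} (the paper cites \cite[Theorem 2.10]{MR1301331} for this, the same input you invoke). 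Your slightly more abstract framing in terms of $e_{\min}$ is a harmless repackaging of the same computation, and your sign bookkeeping $(-1)^{d+l}(-1)^{l-1}=(-1)^{d+1}$ and the identification $\dim\sigma_v=d-\dim\tau_v$ agree with the paper.
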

\begin{proof}
When $\tau_v=w$, we have $p_w=l$ and $E_{v, w}=\prod_{i=0}^{l-1} \lb \sigma^w -i \rb$.
From \pref{th:main1}, we can see that the leading term of $\int_{C_t^{w}}  \Omega_t^{l, v}$ is given by
\begin{align}
\frac{(-1)^{d+l}}{(l-1)!}
\prod_{i=1}^{l-1} \lb -i \rb \cdot
\int_{Y_{w}} 
\exp \lb (-\log t) \omega_{\lambda}^{w} \rb
\cdot 
\sigma^w
=
(-1)^{d+1}
\sum_{m \in A_{w}}
\int_{Y_{w}} 
\exp \lb \lb - \log t \rb \omega_{\lambda}^{w} \rb
\cdot 
D^w_m.
\end{align}
When $w \nin \tau_v$, we have $p_w=0$ and $E_{v, w}=\prod_{j \in A_w \cap \tau_v} \prod_{i=0}^{p_j-1} \lb D_j^w +i \rb$.
The leading term of $\int_{C_t^{w}}  \Omega_t^{l, v}$ is
\begin{align}
\frac{(-1)^{d}}{(l-1)!}
\lb \prod_{j \in A_w \cap \tau_v} \lb p_j-1\rb! \rb
\int_{Y_{w}} 
\exp \lb (-\log t) \omega_{\lambda}^{w} \rb
\cdot 
\prod_{j \in A_w \cap \tau_v} D_j^w.
\end{align}
The cells $\sigma_m$ and $\sigma_v$ are contained in the polytope $\nabla^w$ of \eqref{eq:nabla} as its faces.
The normal fan of $\nabla^w$ is $\Sigma_w$, and the faces $\sigma_m$ and $\sigma_v$ correspond to the strata $D_m^w$ and $\bigcap_{j \in A_w \cap \tau_v} D_j^w$ in the toric variety $Y_w$ respectively.
By \cite[Theorem 2.10]{MR1301331} again, one can get
\begin{align}\label{eq:vol-sig}
\int_{Y_{w}} 
\exp \lb \omega_{\lambda}^{w} \rb
\cdot 
D^w_m
=\vol \lb \sigma_m \rb, \quad
\int_{Y_{w}} 
\exp \lb \omega_{\lambda}^{w} \rb
\cdot 
\prod_{j \in A_w \cap \tau_v} D_j^w
=
\vol \lb \sigma_v \rb.
\end{align}
By using these, we obtain the claim.
\end{proof}

\begin{remark}
When $d=1, l=1$, the leading terms \eqref{eq:lead1} and \eqref{eq:lead2} are written as $\lb - \log t \rb \cdot l(w, w)$ and $-\lb - \log t \rb \cdot l(v, w)$ respectively, where $l(w, w), l(v, w)$ are the tropical periods of the tropical curve $X\lb \trop \lb f \rb \rb$, which we recalled in \pref{sc:log}.
The fact that the leading terms of the periods of a degenerating family of (plane) curves are given by the tropical periods of the tropical curve obtained by tropicalization was first observed by Iwao \cite{MR2576286}.
Lang \cite{MR4099633} also studied the leading terms of the periods of a degenerating family of curves under the tropical limit in a more general setup.
See Remark 3.7 in loc.cit.
It is also known that the valuation of the $j$-invariant of an elliptic curve over a non-archimedean valuation field coincides with the affine length of the cycle in the tropical elliptic curve obtained by tropicalization \cite{MR2457725, MR2570928}.
\end{remark}

\section{Example}\label{sc:ex}

In order to illustrate \pref{th:main1}, we write down the result of a period integral for \pref{eg:d=2}.
For example, suppose $w=0$, $l=2$, and $v=2e_1$.
Then $\tau_v=\lc e_1\rc, p_{m=e_1}=2, p_{w=0}=0$, and $A_{w=0}=\lc \pm e_1, \pm e_2, \pm e_3\rc$.
The toric variety $Y_{w=0}$ is $\bP^1 \times \bP^1 \times \bP^1$, and
\begin{align}
\omega_{\lambda}^{w=0}=\sum_{m \in A_{0}} D_{m}^{w=0}=\sigma^{w=0}
\end{align}
is the anticanonical divisor on it.
We also have
\begin{align}
E_{v=2e_1, w=0}&=D_{e_1}^{w=0} \cdot \lb D_{e_1}^{w=0} +1 \rb=D_{e_1}^{0}=\lc 0 \rc \times \bP^1 \times \bP^1,\\
\widehat{\Gamma}_{w=0}&=\exp \lb \sum_{k \geq 2} (-1)^k \frac{\zeta(k)}{k} \lb \sum_{m \in A_0} \lb D_m^0 \rb^k-\lb \sigma^0 \rb^k \rb \rb.
\end{align}
According to \pref{th:main1}, we have
\begin{align}
\int_{C_t^{w=0}}  \Omega_t^{l=2, v=2e_1}
&=
\int_{Y_{0}} 
t^{-\sigma^0}
\cdot 
\prod_{m \in A_{0}} 
\lb
-\frac{c_m}{c_{w=0}}
\rb^{-D_m^0}
\cdot 
D_{e_1}^0
\cdot 
\widehat{\Gamma}_0
+O \lb t^\epsilon \rb \\
&=
\int_{Y_{0}} 
\lb 1+ \sigma^0 \lb -\log t \rb + \frac{1}{2} \lb \sigma^0 \rb^2 \lb -\log t \rb^2 \rb\\ &\qquad \qquad 
\cdot \prod_{m \in A_{0}} \lb 1-D_m^0 \log \lb - \frac{c_m}{c_0} \rb \rb \cdot D_{e_1}^0 \cdot
\lb 1-\frac{\zeta(2)}{2} \lb \sigma^0 \rb^2 \rb +O \lb t^\epsilon \rb.
\end{align}
For instance, the top term is 
\begin{align}
\int_{Y_{0}} \frac{1}{2} \lb \sigma^0 \rb^2 \lb -\log t \rb^2 \cdot D_{e_1}^0
=
4 \lb -\log t \rb^2 
=
\vol \lb \sigma_{v} \rb \lb -\log t \rb^2,
\end{align}
where $\sigma_{v} \in \scrP$ is the $2$-cell dual to $\conv \lb \lc w=0 \rc \cup \tau_v=\lc e_1 \rc \rb \in \scrT$ (cf.~\pref{sc:lead}), and the second term is
\begin{align}
\sigma^0 \lb -\log t \rb \cdot \lb -\sum_{m \in A_0} D_m^0 \log \lb - \frac{c_m}{c_0} \rb \rb \cdot D_{e_1}^0
=
-2 \lb -\log t \rb \cdot \sum_{m \in \lc \pm e_2, \pm e_3 \rc} \log \lb - \frac{c_m}{c_0} \rb.
\end{align}

\section{Polarized logarithmic Hodge structure of curves}\label{sc:log2}

We work under the same assumptions and use the same notation as in \pref{sc:log}.
In particular, we assume $d=1$.
We will prove \pref{cr:log}.
Recall that when we constructed the sphere cycle $C_t^w$ of \eqref{eq:Ctw} in \pref{sc:sphere}, we chose branches of $\arg \lb -c_{m}/c_{w} \rb$ for all $m \in A \setminus \lc w \rc$.
In order to prove \pref{cr:log}, we will choose them so that we can easily see the intersection numbers between the sphere cycles $C_t^w$ $(w \in W)$.
This is what we will first do in this section (\pref{lm:arg}, \pref{lm:intersect}).

We fix a basis $\lc e_1, e_2 \rc$ of the lattice $M \cong \bZ^2$.
We set
\begin{align}
\scA&:=\lc (m_0, m_1) \in A \times A \relmid m_0 \neq m_1 \rc\\
\scB&:=\lc (m_0, m_1) \in \scA \relmid \conv \lb \lc m_0, m_1 \rc \rb \in \scrT \rc,
\end{align}
where $A:=\Delta \cap M$.

\begin{lemma}\label{lm:arg}
One can simultaneously choose branches of $\arg \lb -c_{m_1}/c_{m_0} \rb \in \bR$ for all pairs $(m_0, m_1) \in \scA$ so that we have
\begin{align}\label{eq:condi1}
\arg \lb -\frac{c_{m_1}}{c_{m_0}} \rb=-\arg \lb -\frac{c_{m_0}}{c_{m_1}} \rb
\end{align}
for any pair $(m_0, m_1) \in \scB$ and its reversed pair $(m_1, m_0) \in \scB$, and 
\begin{align}\label{eq:condi2}
\arg \lb -\frac{c_{m_1}}{c_{m_0}} \rb-\arg \lb -\frac{c_{m_2}}{c_{m_0}} \rb
=\arg \lb -\frac{c_{m_1}}{c_{m_2}} \rb+\frac{\lb m_1-m_0 \rb \wedge \lb m_2-m_0 \rb}{e_1 \wedge e_2} \cdot \pi
\end{align}
for any (ordered) pair $(m_0, m_1, m_2) \in A \times A \times A$ such that $\conv \lb \lc m_0, m_1, m_2 \rc \rb$ is a $2$-cell in $\scrT$.
\end{lemma}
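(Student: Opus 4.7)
My plan is to recast the statement as a question about simplicial cohomology of the triangulation $\scrT$ and solve it using contractibility of $\Delta$.

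After a generic perturbation of the coefficients (the degenerate case when some $-c_{m_1}/c_{m_0}$ hits the negative real axis can be absorbed into a free choice of branches at the end), I will write every candidate branch as $\arg(-c_{m_1}/c_{m_0}) = \Arg(-c_{m_1}/c_{m_0}) + 2\pi n(m_0,m_1)$, where $\Arg \in (-\pi,\pi)$ is the principal branch and $n(m_0,m_1)\in\bZ$ is an unknown integer defined for $(m_0,m_1)\in\scB$. In this parametrization one has $\Arg(-c_{m_1}/c_{m_0}) = -\Arg(-c_{m_0}/c_{m_1})$, so condition \eqref{eq:condi1} is equivalent to the antisymmetry $n(m_0,m_1)+n(m_1,m_0)=0$. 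Thus $n$ becomes an antisymmetric integer $1$-cochain on the $2$-dimensional simplicial complex $\scrT$.

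Next I will use the algebraic identity $(-c_{m_1}/c_{m_0})(-c_{m_2}/c_{m_1})(-c_{m_0}/c_{m_2})=-1$ to show that, for each $2$-simplex $(m_0,m_1,m_2)\in\scrT$, setting $\mathrm{or}(m_0,m_1,m_2):=\frac{(m_1-m_0)\wedge(m_2-m_0)}{e_1\wedge e_2}\in\{\pm 1\}$, the real number
\begin{equation*}
T(m_0,m_1,m_2):=\frac{1}{2\pi}\Bigl(\mathrm{or}(m_0,m_1,m_2)\,\pi-\Arg(-c_{m_1}/c_{m_0})-\Arg(-c_{m_2}/c_{m_1})-\Arg(-c_{m_0}/c_{m_2})\Bigr)
\end{equation*}
is an integer, and that it defines an antisymmetric integer $2$-cochain on $\scrT$. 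A short computation using antisymmetry of both $n$ and $\Arg$ will then rewrite condition \eqref{eq:condi2} as $n(m_0,m_1)+n(m_1,m_2)+n(m_2,m_0)=T(m_0,m_1,m_2)$, i.e.\ as the coboundary equation $\delta n = T$ in the simplicial cochain complex of $\scrT$.

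To conclude I will invoke topology. Since $d=1$, the polytope $\Delta$ has dimension $2$ and is contractible, so $H^2_{\mathrm{simp}}(\scrT;\bZ)=H^2(\Delta;\bZ)=0$; moreover $\scrT$ has no $3$-simplices, so every $2$-cochain is automatically a cocycle, and the vanishing of $H^2$ forces every cocycle to be a coboundary. Hence $T=\delta n$ for some antisymmetric integer $1$-cochain $n$, which supplies the required branches; pairs in $\scA\setminus\scB$ carry no constraint and may be assigned arbitrary branches. The main obstacle I expect is the careful sign bookkeeping in translating \eqref{eq:condi1}--\eqref{eq:condi2} into $\delta n = T$ and in checking that $T$ is indeed integer-valued and antisymmetric; the degenerate case of $-c_{m_1}/c_{m_0}$ landing on the negative real axis is harmless because the cohomological argument is unaffected if we replace $\Arg$ by any fixed branch on the offending pairs.
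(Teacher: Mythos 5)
Your cohomological approach is correct and takes a genuinely different route from the paper. The paper proves the lemma constructively: it orders the points of $A$ by a perturbed strictly convex function $\lambda'$, and inductively extends the branch choice one vertex $m_{k+1}$ at a time, using convexity of $\lambda'$ to show that at each step $\mathring{A}(k)\neq\emptyset$ and some $\overline{m}\in\mathring{A}(k)$ exists for which no prior constraint binds (the vertex $m_{k+1}$ cannot be entirely surrounded by the already-treated points, else convexity would fail). Your argument bypasses that local-to-global sweep entirely: after the bookkeeping you describe, the problem becomes $\delta n = T$ in the integral simplicial cochain complex of $\scrT$, and since $\dim\scrT=2$ (because $d=1$) and $|\scrT|=\Delta$ is contractible, one has $C^2=Z^2=B^2$, so a solution $n$ exists. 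Your verification that $T$ is integer-valued via the identity $(-c_{m_1}/c_{m_0})(-c_{m_2}/c_{m_1})(-c_{m_0}/c_{m_2})=-1$ together with unimodularity of $\scrT$ (which gives $\mathrm{or}\in\{\pm1\}$, hence the sum of principal arguments is $\pm\pi$ and $T\in\{-1,0,1\}$) is exactly right, as is the observation that pairs in $\scA\setminus\scB$ are unconstrained. Your route is shorter and more conceptual (it isolates the obstruction in $H^2$ and kills it by contractibility, never touching the convexity of $\lambda$), while the paper's route is more explicit about how the branches are actually built and tracks extra combinatorial structure along the way.

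One point to tighten: rather than invoking a perturbation of the coefficients or speaking vaguely of replacing $\Arg$ on offending pairs, simply choose once and for all a reference branch $a(m_0,m_1)\in\bR$ of $\arg(-c_{m_1}/c_{m_0})$ for each $1$-cell $\{m_0,m_1\}$ of $\scrT$, subject only to $a(m_1,m_0)=-a(m_0,m_1)$; this is always possible, even when $-c_{m_1}/c_{m_0}$ is a negative real, because $(-c_{m_1}/c_{m_0})(-c_{m_0}/c_{m_1})=1$. Writing $\arg=a+2\pi n$ in place of $\arg=\Arg+2\pi n$, your entire argument (integrality and antisymmetry of $T$, the translation to $\delta n = T$, and the cohomological vanishing) goes through verbatim with no degenerate case to handle separately.
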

\begin{proof}
First, note the following:
The former condition \eqref{eq:condi1} implies that if we choose a branch of $\arg \lb -c_{m_1}/c_{m_0} \rb$ for a pair $(m_0, m_1) \in \scB$, then that of $\arg \lb -c_{m_0}/c_{m_1} \rb$ for the reversed pair $(m_1, m_0) \in \scB$ is automatically determined.
The latter condition \eqref{eq:condi2} implies that for a $2$-cell $\conv \lb \lc m_0, m_1, m_2 \rc \rb$ in $\scrT$, if we choose branches of $\arg \lb -c_{m_1}/c_{m_0} \rb$ and $\arg \lb -c_{m_2}/c_{m_0} \rb$, then the branch of $\arg \lb -c_{m_1}/c_{m_2} \rb$ is also automatically determined.
One can easily check that even if we use \eqref{eq:condi2} for the same elements $m_0, m_1, m_2$ but with a different order, the argument $\arg \lb -c_{m_1}/c_{m_2} \rb=-\arg \lb -c_{m_2}/c_{m_1} \rb$ determined by the choices of $\arg \lb -c_{m_1}/c_{m_0} \rb$ and $\arg \lb -c_{m_2}/c_{m_0} \rb$ is the same.

We first choose branches of $\arg \lb -c_{m_1}/c_{m_0} \rb$ for elements $(m_0, m_1)$ in $\scB$.
By adding a real constant whose absolute value is small to the value of the function $\lambda \colon A \to \bQ$ \eqref{eq:lambda} at each $m \in A$ if necessary, one can make a new function $\lambda' \colon A \to \bR$ such that it takes different values at all points in $A$, and still extends to a strictly-convex piecewise affine function on the same triangulation $\scrT$ of $\Delta$ as well as the original function $\lambda$.
For $k \in \bZ_{> 0}$ which is less than or equal to the number of elements in $A$, let $m_k \in A$ denote the unique element at which the function $\lambda' \colon A \to \bR$ takes the $k$-th smallest value, and we set
\begin{align}
A(k):=\lc m_1, \cdots, m_k \rc, \quad \mathring{A}(k):=\lc m \in A(k) \relmid \conv \lb \lc m, m_{k+1} \rc \rb \in \scrT \rc.
\end{align}

\begin{claim}
One has $\mathring{A}(k) \neq \emptyset$.
\end{claim}
\begin{proof}
Suppose $\mathring{A}(k) = \emptyset$.
This implies that the values of the function $\lambda'$ at points in 
\begin{align}\label{eq:amk}
\lc m \in A \setminus \lc m_{k+1} \rc \relmid \conv \lb \lc m, m_{k+1} \rc \rb \in \scrT \rc
\end{align}
are greater than $\lambda' \lb m_{k+1} \rb$.
Take the line segment whose endpoints are $m_k, m_{k+1}$.
It intersects with a $1$-cell in $\scrT$ whose endpoints are contained in \eqref{eq:amk}, since the point $m_{k+1}$ is surrounded by such $1$-cells.
Let $p \in \Delta$ denote the intersection point.
The value at the point $p$ of the extension of the function $\lambda'$ to $\Delta$ is greater than $\lambda' \lb m_{k+1} \rb>\lambda' \lb m_{k} \rb$.
This contradicts the convexity of the extension of the function $\lambda'$ to $\Delta$.
Therefore, we have $\mathring{A}(k) \neq \emptyset$.
\end{proof}

In particular, we have $\mathring{A}(1) \neq \emptyset$, i.e., $\conv \lb \lc m_1, m_{2} \rc \rb \in \scrT$.
We first choose a branch of $\arg \lb -c_{m_2}/c_{m_1} \rb=-\arg \lb -c_{m_1}/c_{m_2} \rb$ arbitrarily.
We continue to choose branches of $\arg$ inductively as follows:
Suppose that we have chosen branches of $\arg \lb -c_{m_j}/c_{m_i} \rb$ for all pairs $\lb m_i, m_j \rb \in \scB$ such that $m_i, m_j \in A(k)$ so that all of them satisfy the conditions \eqref{eq:condi1} and \eqref{eq:condi2}.
There exists at least one element $\overline{m} \in \mathring{A}(k)$ satisfying the following:
\begin{condition}\label{cd:2cell}
The number of $2$-cells in $\scrT$, which contains $\overline{m}, m_{k+1}$, and another element in $A(k)$ is at most one.
\end{condition}
(Otherwise elements in $A$ which are adjacent to $m_{k+1}$ in $\scrT$ are all contained in $A(k)$, and the element $m_{k+1}$ is contained in $\Int \lb \Delta \rb$.
Since the values of $\lambda'$ at points in $A(k)$ are less than $\lambda' \lb m_{k+1} \rb$, we get contradiction with the convexity of $\lambda'$ again.)
We fix such an element $\overline{m} \in \mathring{A}(k)$.
We choose a branch of $\arg \lb -c_{m}/c_{m_{k+1}} \rb=-\arg \lb -c_{m_{k+1}}/c_{m} \rb$ for every $m \in \mathring{A}(k)$ one by one counterclockwise or clockwise, starting from choosing for the element $\overline{m} \in \mathring{A}(k)$.
We choose either counterclockwise or clockwise so that the element in $\mathring{A}(k)$ for which we choose a branch of $\arg$ just after $\overline{m}$ is contained in the $2$-cell of \pref{cd:2cell} if that $2$-cell exists.
If the $2$-cell does not exist, then one may choose either counterclockwise or clockwise arbitrarily.

For $\overline{m} \in \mathring{A}(k)$, one may choose a branch arbitrarily.
Between choosing a branch for $m$ and $m'$ in $ \mathring{A}(k)$, if there is a $2$-cell in $\scrT$, which contains $m, m', m_{k+1}$, then the branch of $\arg \lb -c_{m'}/c_{m_{k+1}} \rb=-\arg \lb -c_{m_{k+1}}/c_{m'} \rb$ is automatically determined, once we choose that of $\arg \lb -c_{m}/c_{m_{k+1}} \rb=-\arg \lb -c_{m_{k+1}}/c_{m} \rb$ by \eqref{eq:condi2}.
If there is not such a $2$-cell, then we may choose a branch of $\arg \lb -c_{m'}/c_{m_{k+1}} \rb=-\arg \lb -c_{m_{k+1}}/c_{m'} \rb$ arbitrarily again.
In this way, we are able to choose branches of $\arg \lb -c_{m}/c_{m_{k+1}} \rb=-\arg \lb -c_{m_{k+1}}/c_{m} \rb$ for all $m \in \mathring{A}(k)$ so that they satisfy \eqref{eq:condi1} and \eqref{eq:condi2}.
What might be worried about is if there is a $2$-cell in $\scrT$ containing $\overline{m}, m_{k+1}$, and the element in $\mathring{A}(k)$ for which we lastly chose a branch of $\arg$, and \eqref{eq:condi2} fails for that $2$-cell.
However, such a $2$-cell does not exist due to \pref{cd:2cell} for $\overline{m}$ and the way to choose counterclockwise/clockwise.

By continuing this process, we can choose branches of $\arg \lb -c_{m_1}/c_{m_0} \rb$ for all elements $(m_0, m_1) \in \scB$ as the claim of the lemma requires.
For elements $(m_0, m_1) \in \scA \setminus \scB$, one may choose branches of $\arg \lb -c_{m_1}/c_{m_0} \rb$ arbitrarily, since no conditions are required for them.
\end{proof}

\begin{lemma}\label{lm:intersect}
We choose branches of $\arg \lb -c_{m_1}/c_{m_0} \rb \in \bR$ for all $\lb m_0, m_1 \rb \in \scA$ as in \pref{lm:arg}.
If we construct the cycles $C_t^w$ $(w \in W)$ with these choices of branches, then the intersection number of the cycles $C_t^{w_0}$ and $C_t^{w_1}$ is zero for any $w_0, w_1 \in W$.
\end{lemma}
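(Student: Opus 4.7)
The plan is to show that, with the consistent choice of branches from Lemma \ref{lm:arg}, the cycles $C_t^{w_0}$ and $C_t^{w_1}$ can be constructed as mutually disjoint closed curves in the Riemann surface $Z_t$; the vanishing of the intersection number then follows immediately. The case $w_0=w_1$ is automatic because the cup-product pairing on $H^1$ of an oriented Riemann surface is skew-symmetric, so $\la \beta_w,\beta_w\ra=0$ for any class; henceforth assume $w_0 \ne w_1$.

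From the construction of Section \ref{sc:sphere}, the cycle $C_t^w$ projects via $\Log_t$ into an arbitrarily small neighborhood of $\partial \nabla^w$ for small $t>0$, where $\nabla^w \in \scrP$ is the $2$-cell dual to $\{w\}\in\scrT$. Whenever $\nabla^{w_0}\cap\nabla^{w_1}=\emptyset$ the projections are disjoint and the claim is trivial, so I focus on the situation in which $\nabla^{w_0}$ and $\nabla^{w_1}$ share a face, which is either a vertex or a $1$-cell of $\scrP$. The principal case is a shared $1$-cell $\sigma\in\scrP$ dual to $\conv(\{w_0,w_1\})\in\scrT$. Over an interior point $n$ of $\sigma$, the fiber of $\Log_t|_{Z_t}$ is a single circle, and $C_t^{w_i}$ meets this circle in one point determined by the value of the phase-shifting function $\phi^{w_i}(n,0)$. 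For small $\kappa$, the index set $K_\kappa^n$ defined by \eqref{eq:K} reduces to $\{w_{1-i}\}$, so the only constraint imposed on $\phi^{w_i}(n,0)$ is $\la w_{1-i}-w_i,\,\phi^{w_i}(n,0)\ra=-\arg(-c_{w_{1-i}}/c_{w_i})$. By condition \eqref{eq:condi1} of Lemma \ref{lm:arg}, the two constraints for $i=0,1$ coincide in the direction of $w_1-w_0$, so the one-dimensional transverse component of $\phi^{w_0}(n,0)-\phi^{w_1}(n,0)$ is free. I will pick the phase-shifting functions so that, for every $n\in\sigma$, this transverse component avoids the discrete sublattice $2\pi N \cap (w_1-w_0)^\perp$; then the two arcs $C_t^{w_i}\cap\Log_t^{-1}(\sigma)$ pass through distinct points of each fiber circle and are therefore disjoint over $\sigma$.

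The remaining case, where $\nabla^{w_0}$ and $\nabla^{w_1}$ meet only at a vertex $v\in\scrP$, is handled analogously by invoking condition \eqref{eq:condi2} of Lemma \ref{lm:arg} for the unimodular triangle in $\scrT$ dual to $v$: the preimage of a small neighborhood of $v$ under $\Log_t|_{Z_t}$ is a pair-of-pants through which both cycles travel, and the signed relation \eqref{eq:condi2} enables steering their arcs onto different pairs of legs. The hard part will be packaging these local disjointness choices into globally smooth phase-shifting functions $\phi^{w_0}$ and $\phi^{w_1}$ defined on the whole of $N_\kappa^{w_0}$ and $N_\kappa^{w_1}$ respectively; this is where the simultaneous compatibility of \eqref{eq:condi1} and \eqref{eq:condi2} supplied by Lemma \ref{lm:arg} is essential, as it guarantees that the locally specified transverse displacements at each shared edge and shared vertex can be interpolated consistently with the constraints \eqref{eq:phase-shift}. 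Once such $\phi^{w_0},\phi^{w_1}$ are fixed, the cycles $C_t^{w_0}$ and $C_t^{w_1}$ are disjoint in $Z_t$, so $\la \beta_{w_0},\beta_{w_1}\ra=0$.
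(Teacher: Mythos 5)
Your overall strategy---reduce to a disjointness statement---is the one the paper also uses, but there is a genuine gap at the center of the argument. You assert that over the interior of the shared $1$-cell $\sigma\in\scrP$, the constraint \eqref{eq:phase-shift} on $\phi^{w_i}$ pins down only the $w_1-w_0$ direction, leaving the transverse component free to be nudged so the arcs miss one another. That is true at interior points of $\sigma$, but $C_t^{w_i}$ is a closed loop which must close up near the vertices $v_0,v_1$ of $\sigma$: near $v_0$, the index set $K_\kappa^n(w_i)$ contains two elements (e.g.\ $\{w_{1-i},m_0\}$), and since $d=1$ and $\scrT$ is unimodular, the two resulting constraints in \eqref{eq:phase-shift} determine $\phi^{w_i}(n,0)\in N_\bR\cong\bR^2$ completely---there is no freedom at all there. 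The ``hard part'' you acknowledge (gluing your locally free transverse displacements into a valid global phase-shifting function) is therefore not a technicality; it is precisely the content of the lemma, and the conditions \eqref{eq:condi1}, \eqref{eq:condi2} are needed not to permit smooth interpolation but to check that the vertex-determined phases of $C_t^{w_0}$ and $C_t^{w_1}$ are actually consistent with non-intersection.

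The paper's proof carries out exactly this check. It uses \eqref{eq:condi1} and \eqref{eq:condi2} to show that near both vertices of $\sigma$ the two cycles sit at antipodal points of the fiber circle (phases offset by $\pm\pi$), and then---in the second half of the proof---that the total windings of $C_t^{w_0}$ and of $C_t^{w_1}$ along the cylinder over $\sigma$ between $v_0$ and $v_1$ are equal, so the $\pm\pi$ offset persists across the whole cylinder and the cycles never meet. Without this winding computation your argument does not close. A secondary point: your ``remaining case'' (shared vertex but no shared edge) cannot occur. If $\nabla^{w_0}$ and $\nabla^{w_1}$ share the vertex of $\scrP$ dual to a $2$-cell $\tau\in\scrT$, then $\tau$ is a triangle having both $w_0,w_1$ among its vertices, hence $\conv(\{w_0,w_1\})$ is an edge of $\tau$, and $\nabla^{w_0},\nabla^{w_1}$ automatically share the dual edge of $\scrP$ as well.
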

\begin{proof}
The set $B_t^w$ of \eqref{eq:Btw} converges to the boundary of the cell in $\scrP$ dual to $\lc w \rc \in \scrT$ as $t \to +0$.
Hence, it is obvious that the cycles $C_t^{w_0}$ and $C_t^{w_1}$ can intersect with each other only when the boundaries of the cells in $\scrP$ that are dual to $\lc w_0 \rc, \lc w_1 \rc \in \scrT$ intersect.
This happens if and only if $\conv \lb \lc w_0, w_1 \rc \rb$ is a $1$-cell in $\scrT$.
We can also see that even when $\conv \lb \lc w_0, w_1 \rc \rb$ is a $1$-cell in $\scrT$, the cycles $C_t^{w_0}$ and $C_t^{w_1}$ can intersect only in the subset $\Log_t^{-1} \lb U \rb \subset N_{\bC^\ast}$, where $U \subset N_\bR$ is a small neighborhood of the edge in $\scrP$ dual to $\conv \lb \lc w_0, w_1 \rc \rb \in \scrT$.

Suppose that $\conv \lb \lc w_0, w_1 \rc \rb$ is a $1$-cell in $\scrT$ in the following.
Let $e \in \scrP$ be the edge dual to $\conv \lb \lc w_0, w_1 \rc \rb \in \scrT$, and $v_0, v_1 \in \scrP$ be its vertices.
Let further $m_0, m_1 \in A \setminus \lc w_0, w_1\rc$ denote the elements such that $\mu_{m_i}=\trop (f)$ at $v_i$ $(i=0, 1)$ respectively.
We take small neighborhoods $U_{v_0}, U_{v_1} \subset N_\bR$ of the points $v_0, v_1$.
When these neighborhoods are sufficiently small, one has 
\begin{align}\label{eq:kappas}
K_\kappa^{n} \lb w_i \rb=\lc w_j, m_k \rc, \forall n \in U_{v_k} \quad \lb (i, j)=(0, 1), (1, 0), \ k=0 ,1 \rb,
\end{align}
where $K_\kappa^{n} \lb w_i \rb$ denotes the set $K_\kappa^{n}$ of \eqref{eq:K} for $w=w_i$.
Let $n \in U_{v_0}$ be a point.
By \eqref{eq:kappas} and \eqref{eq:phase-shift}, we have
\begin{align}\label{eq:phiw0}
\la m_0-w_0, \phi^{w_0}(n, 0) \ra &= - \arg \lb -\frac{c_{m_0}}{c_{w_0}} \rb\\ \label{eq:phiw1}
\la m_0-w_1, \phi^{w_1}(n, 0) \ra &= - \arg \lb -\frac{c_{m_0}}{c_{w_1}} \rb, \quad \la w_0-w_1, \phi^{w_1}(n, 0) \ra = - \arg \lb -\frac{c_{w_0}}{c_{w_1}} \rb,
\end{align}
where $\phi^{w_i} \colon N_\kappa^{w_i} \times D_\varepsilon \to N_\bR$ denotes the function taken in \eqref{eq:phase-shift} for $w=w_i$ $(i=0, 1)$.
By \eqref{eq:phiw1}, \pref{eq:condi2}, and \eqref{eq:condi1}, we get
\begin{align}\label{eq:phiw2}
\begin{split}
\la m_0-w_0, \phi^{w_1}(n, 0) \ra 
&= - \arg \lb -\frac{c_{m_0}}{c_{w_1}} \rb+\arg \lb -\frac{c_{w_0}}{c_{w_1}} \rb\\
&=- \arg \lb -\frac{c_{m_0}}{c_{w_0}} \rb +\frac{\lb w_0-w_1 \rb \wedge \lb m_0-w_1 \rb}{e_1 \wedge e_2} \cdot \pi.
\end{split}
\end{align}
Since this differs from \eqref{eq:phiw0} by $\pm \pi$, we can see that the cycles $C_t^{w_0}$ and $C_t^{w_1}$ do not intersect around $\Log_t^{-1} \lb v_0 \rb$.
By the same argument, we can also see that this is the case also around $\Log_t^{-1} \lb v_1 \rb$.

Take points $n_i \in B_t^{w_i} \cap U_{v_1}$ $(i=0, 1)$.
By \eqref{eq:kappas} and \eqref{eq:phase-shift} again, we have
\begin{align}\label{eq:hiw0}
\la m_1-w_0, \phi^{w_0}(n_0, 0) \ra &= - \arg \lb -\frac{c_{m_1}}{c_{w_0}} \rb, \quad \la w_1-w_0, \phi^{w_0}(n_0, 0) \ra = - \arg \lb -\frac{c_{w_1}}{c_{w_0}} \rb \\ \label{eq:hiw1}
\la m_1-w_1, \phi^{w_1}(n_1, 0) \ra &= - \arg \lb -\frac{c_{m_1}}{c_{w_1}} \rb, \quad \la w_0-w_1, \phi^{w_1}(n_1, 0) \ra = - \arg \lb -\frac{c_{w_0}}{c_{w_1}} \rb.
\end{align}
We can write $m_0-w_0=k_1 \lb w_1-w_0\rb+k_2 \lb m_1-w_0\rb$ with $k_1, k_2 \in \bR$.
By taking $\bullet \wedge \lb w_1-w_0\rb / e_1 \wedge e_2$ for the both sides of this, we obtain 
\begin{align}\label{eq:k12}
k_2=\frac{ \lb m_0-w_0 \rb \wedge \lb w_1-w_0\rb}{ \lb m_1-w_0\rb \wedge \lb w_1-w_0\rb}.
\end{align}
By \eqref{eq:hiw0} and \eqref{eq:hiw1}, we can also get
\begin{align}\label{eq:hiw01}
\la m_0-w_0, \phi^{w_0}(n_0, 0) \ra&=-k_1 \arg \lb -\frac{c_{w_1}}{c_{w_0}} \rb-k_2 \arg \lb -\frac{c_{m_1}}{c_{w_0}} \rb\\ \label{eq:hiw11}
\la m_0-w_0, \phi^{w_1}(n_1, 0) \ra&=\lb k_1+k_2 \rb \arg \lb -\frac{c_{w_0}}{c_{w_1}} \rb-k_2 \arg \lb -\frac{c_{m_1}}{c_{w_1}} \rb.
\end{align}
The difference between \eqref{eq:hiw01} and \eqref{eq:phiw0} and the difference between \eqref{eq:hiw11} and \eqref{eq:phiw2} are 
\begin{align}
-k_1 \arg \lb -\frac{c_{w_1}}{c_{w_0}} \rb-k_2 \arg \lb -\frac{c_{m_1}}{c_{w_0}} \rb+\arg \lb -\frac{c_{m_0}}{c_{w_0}} \rb
\end{align}
and
\begin{align}
\lb k_1+k_2 \rb \arg \lb -\frac{c_{w_0}}{c_{w_1}} \rb-k_2 \arg \lb -\frac{c_{m_1}}{c_{w_1}} \rb+ \arg \lb -\frac{c_{m_0}}{c_{w_0}} \rb -\frac{\lb w_0-w_1 \rb \wedge \lb m_0-w_1 \rb}{e_1 \wedge e_2} \cdot \pi
\end{align}
respectively, and the difference between these two is
\begin{align}
\begin{aligned}
-k_2 \arg \lb -\frac{c_{w_1}}{c_{w_0}} \rb-k_2 \arg \lb -\frac{c_{m_1}}{c_{w_1}} \rb+ k_2 \arg \lb -\frac{c_{m_1}}{c_{w_0}} \rb 
-\frac{\lb w_0-w_1 \rb \wedge \lb m_0-w_1 \rb}{e_1 \wedge e_2} \cdot \pi\\
=k_2 \frac{\lb m_1-w_0\rb \wedge \lb w_1-w_0\rb}{e_1\wedge e_2} \cdot \pi -\frac{\lb w_0-w_1 \rb \wedge \lb m_0-w_1 \rb}{e_1 \wedge e_2} \cdot \pi
=0,
\end{aligned}
\end{align}
where we used \eqref{eq:condi1}, \eqref{eq:condi2}, and \eqref{eq:k12}.
This means that both $C_t^{w_0}$ and $C_t^{w_1}$ wind around the cylinder in the Riemann surface $Z_t$, which corresponds to the edge $e \in \scrP$ by the same argument.
Thus we can conclude that the intersection number of these is zero.
\end{proof}

In the following, we use the cycles $C_t^w$ $(w \in W)$ considered in \pref{lm:intersect}.
Let $\beta_w$ $(w \in W)$ be the cycle class in $H_1 \lb Z_t, \bZ \rb$ represented by $C_t^w$.
The torus cycle $T_t^\sigma$ intersects with the sphere cycle $C_t^w$ if and only if $\trop \lb f \rb=\mu_w$ on $\sigma$.
In that case, they intersect positively (see \pref{rm:orientation}).
Hence, we can see from \pref{th:main2} that for any cycle class $\alpha$ in the subspace of $H_1 \lb Z_t, \bZ \rb$ generated by $\lc T_t^\sigma \rc_\sigma$, one has 
\begin{align}
\int_{\alpha} \Omega_t^{1, v} 
=
-2 \pi \sqrt{-1} \la \alpha, \beta_v \ra+ O \lb t^\epsilon \rb
\end{align}
for $v \in W$.
Therefore, if we take a basis $\lc \alpha_w \rc_{w \in W}$ of the subspace of $H_1 \lb Z_t, \bZ \rb$ generated by $\lc T_t^\sigma \rc_\sigma$ so that we have \eqref{eq:symp}, then the cohomology class represented by $\Omega_t^{1, v}$ $\lb v \in W \rb$ in $H^1 \lb Z_t, \bZ \rb$ can be written as
\begin{align}\label{eq:Omega1}
\ld \Omega_t^{1, v} \rd
=\lb -2 \pi \sqrt{-1} + O \lb t^\epsilon \rb \rb \alpha_v^\ast
+ \sum_{w \in W \setminus \lc v \rc} O \lb t^\epsilon \rb \alpha_w^\ast
+ \sum_{w \in W} \lb \int_{C_t^{w}} \Omega_t^{1, v} \rb \beta_w^\ast.
\end{align}

Consider the exact sequence of sheaves
\begin{align}
0 \to \Omega_{Y_\Sigma}^{2} \to \Omega_{Y_\Sigma}^{2} \lb Z_t \rb \to \Omega^1_{Z_t} \to 0.
\end{align}
Since we have $H^{i} \lb Y_\Sigma, \Omega^{2} \rb=0$ for $i=0, 1$ (cf.~e.g.~\cite[Corollary 12.7]{MR495499}), we can see that the Poincar\'{e} residue map \eqref{eq:Res} with $l=1$ is an isomorphism onto $H^0 \lb Z_t,  \Omega^1 \rb \subset H^1 \lb Z_t,  \bC \rb$.
Therefore, the Hodge structure of the Riemann surface $Z_t$ is given by
\begin{align}
F^0=H^1 \lb Z_t,  \bC \rb, \quad F^1=\bigoplus_{v \in W} \bC \ld \Omega_t^{1, v} \rd, \quad F^2=0.
\end{align}

\begin{proof}[Proof of \pref{cr:log}]
We refer the reader to \cite[Section 2.5.15]{MR2465224} or \cite[Section 5.2]{MR4484542} for how variations of polarized Hodge structure on a punctured disk extend to logarithmic variations of polarized Hodge structure on the whole disk.
What we need to prove are the following:
For the one-parameter family $\lc Z_q \rc_{q \in D_\rho^\ast}$ of complex curves, 
\begin{itemize}
\item the monodromy around $q=0$ for \eqref{eq:stalk} is given by $\exp \lb N \rb=\id+N$, where $N$ is the one defined in \pref{eq:nilp}, and
\item the limit Hodge structure is given by \eqref{eq:filtration}.
\end{itemize}

We first compute the monodromy.
We write the monodromy around $q=0$ for the homology group as $M \colon H_1 \lb Z_t, \bZ \rb \to H_1 \lb Z_t, \bZ \rb$.
We substitute $q=t e^{\sqrt{-1} \theta}$ $(\theta \in \ld 0, 2 \pi \rd)$ to the indeterminate $x$ in $f$, and consider the limit $t \to +0$ of the periods.
By \pref{th:main1} and \pref{th:main2}, we get
\begin{align}\label{eq:period-theta}
\int_{\beta_w}  \Omega_t^{1, v}
&=
\left\{ \begin{array}{ll}
\int_{Y_{w}} 
t^{-\omega_{\lambda}^{w}}
\cdot 
\prod_{m \in A_{w}} 
\lb
-\frac{c_m e^{\sqrt{-1} \lambda_m \theta}}{c_{w} e^{\sqrt{-1} \lambda_w \theta}}
\rb^{-D_m^w}
\cdot 
\sigma^w +O \lb t^\epsilon \rb & v=w  \\
-
\int_{Y_{w}} 
t^{-\omega_{\lambda}^{w}}
\cdot 
\prod_{m \in A_{w}} 
\lb
-\frac{c_m e^{\sqrt{-1} \lambda_m \theta}}{c_{w} e^{\sqrt{-1} \lambda_w \theta}}
\rb^{-D_m^w}
\cdot 
D_v^w +O \lb t^\epsilon \rb & v \in A_w  \\
O \lb t^\epsilon \rb &  \mathrm{otherwise}
\end{array} 
\right.\\ \label{eq:period-theta2}
\int_{\alpha_w} \Omega_t^{1, v}
&=
\left\{ \begin{array}{ll}
-2 \pi \sqrt{-1} + O \lb t^\epsilon \rb 
& v=w \\
O \lb t^\epsilon \rb & v \neq w
  \end{array} 
\right.
\end{align}
for $v, w \in W$.
The leading terms of \eqref{eq:period-theta} and \eqref{eq:period-theta2} do not depend on $\theta$.
We can see that we have $M \lb \alpha_w \rb=\alpha_w$, and can write
\begin{align}
M \lb \beta_w \rb=\beta_w+\sum_{w' \in W} M(w, w') \alpha_{w'}
\end{align}
with $M(w, w') \in \bZ$.
We also have
\begin{align}
\prod_{m \in A_{w}} 
\lb
-\frac{c_m e^{\sqrt{-1} \lambda_m \theta}}{c_{w} e^{\sqrt{-1} \lambda_w \theta}}
\rb^{-D_m^w}
&=
\prod_{m \in A_{w}} 
\lb
-\frac{c_m}{c_w}
\rb^{-D_m^w}
\exp \lb -\sqrt{-1} \theta \lb \lambda_m -\lambda_w \rb D_m^w \rb\\
&=
\exp \lb -\sqrt{-1} \theta \omega_\lambda^w \rb
\prod_{m \in A_{w}} 
\lb
-\frac{c_m}{c_w}
\rb^{-D_m^w}
\end{align}
and
\begin{align}
\int_{M \lb \beta_w \rb} \Omega_t^{1, v}
=
\int_{C_t^w} \Omega_t^{1, v}-2 \pi \sqrt{-1} M \lb w, v \rb + O \lb t^\epsilon \rb.
\end{align}
From these, \eqref{eq:period-theta} with $\theta=2 \pi$, and \eqref{eq:vol-sig}, we can get
\begin{align}
M(w, v)=
\left\{ \begin{array}{ll}
(-1)^{1+\delta_{w, v}} l(w, v) & v \in A_w \sqcup \lc w \rc \\
0 & \mathrm{otherwise}.
\end{array} 
\right.
\end{align}
We computed the monodromy $M$ for the homology group.
By taking the dual of $M$ and subtracting $\id$, we obtain $N$ of \pref{eq:nilp}.

Next we compute the limit Hodge structure.
It is the filtration $\lc F^p_\infty \rc_{0 \leq p \leq 2}$ determined by
\begin{align}
F^1_\infty = \lim_{t \to +0} \exp \lb \frac{1}{2 \pi \sqrt{-1} } (-\log t) N \rb \cdot
\lb \bigoplus_{v \in W} \bC \ld \Omega_t^{1, v} \rd \rb
\end{align}
and $F^0_\infty=H^1 \lb Z_t, \bZ \rb, F^2_\infty=\lc 0 \rc$.
By \eqref{eq:Omega1} and \pref{eq:nilp}, the cohomology class $\exp \lb \frac{1}{2 \pi \sqrt{-1} } (-\log t) N \rb \cdot \ld \Omega_t^{1, v} \rd$ is equal to
\begin{align}
\ld \Omega_t^{1, v} \rd
-(-\log t) 
\lb \sum_{w \in \lc v \rc \sqcup \lb A_v \cap W \rb}
(-1)^{1+\delta_{v, w}}
\cdot
l(v, w)
\beta_{w}^\ast \rb
+\sum_{w \in W} O\lb (-\log t) t^\epsilon \rb \beta_w^\ast.
\end{align}
By \eqref{eq:period-theta} with $\theta=0$, we can see that this is equal to
\begin{align}
-2 \pi \sqrt{-1} \alpha_v^\ast
+\lb \int_{Y_{v}} \prod_{m \in A_{v}} \lb -\frac{c_m}{c_{v}} \rb^{-D_m^v} \sigma^v \rb \beta_v^\ast
-\sum_{w \in A_v \cap W} \lb \int_{Y_{w}} \prod_{m \in A_{w}} \lb -\frac{c_m}{c_{w}} \rb^{-D_m^w} D_v^w \rb \beta_w^\ast
\end{align}
plus $\sum_{w \in W} O \lb t^\epsilon \rb \alpha_w^\ast+\sum_{w \in W} O\lb (-\log t) t^\epsilon \rb \beta_w^\ast$.
By taking the limit $t \to +0$, we obtain
\begin{align}
F^1_\infty = 
\bigoplus_{v \in W} \bC \cdot \lb -2 \pi \sqrt{-1} \alpha_v^\ast + \sum_{w \in W} P(v, w) \beta_{w}^\ast \rb,
\end{align}
where $P(v, w)$ is the one defined in \eqref{eq:Pvw}.
We proved the claim.
\end{proof}

\begin{example}
We consider the periods and the polarized logarithmic Hodge structure of \pref{eg:curve}.
Let $\sigma_0 \in \scrP$ (resp. $\sigma_1 \in \scrP$) be the edge in $V(\trop (f))$, on which $\mu_0$ (resp. $\mu_{e_1}$) and $\mu_{e_2}$ attain the minimum of $\trop(f)$.
We consider the corresponding torus cycles $T_{t}^{\sigma_0}$ and $T_t^{\sigma_1}$.
We also choose the branches of $\arg \lb - c_{m_1}/c_{m_0}\rb$ $(m_0, m_1) \in \scA$ so that
\begin{align}
\arg \lb - c_{m}/c_{0} \rb=0 \quad (m \in A_0), \quad \arg \lb - c_{e_2}/c_{e_1} \rb=\arg \lb - c_{2e_1}/c_{e_1} \rb=\pi, \quad \arg \lb - c_{-e_2}/c_{e_1} \rb=-\pi
\end{align}
to construct the sphere cycles $C_{t}^0, C_t^{e_1}$.
\pref{fg:curve} shows the curve $Z_t$ and the cycles $T_{t}^{\sigma_0}, T_t^{\sigma_1}, C_{t}^0, C_t^{e_1}$.
\begin{figure}[htbp]
\begin{center}
\includegraphics[scale=0.3]{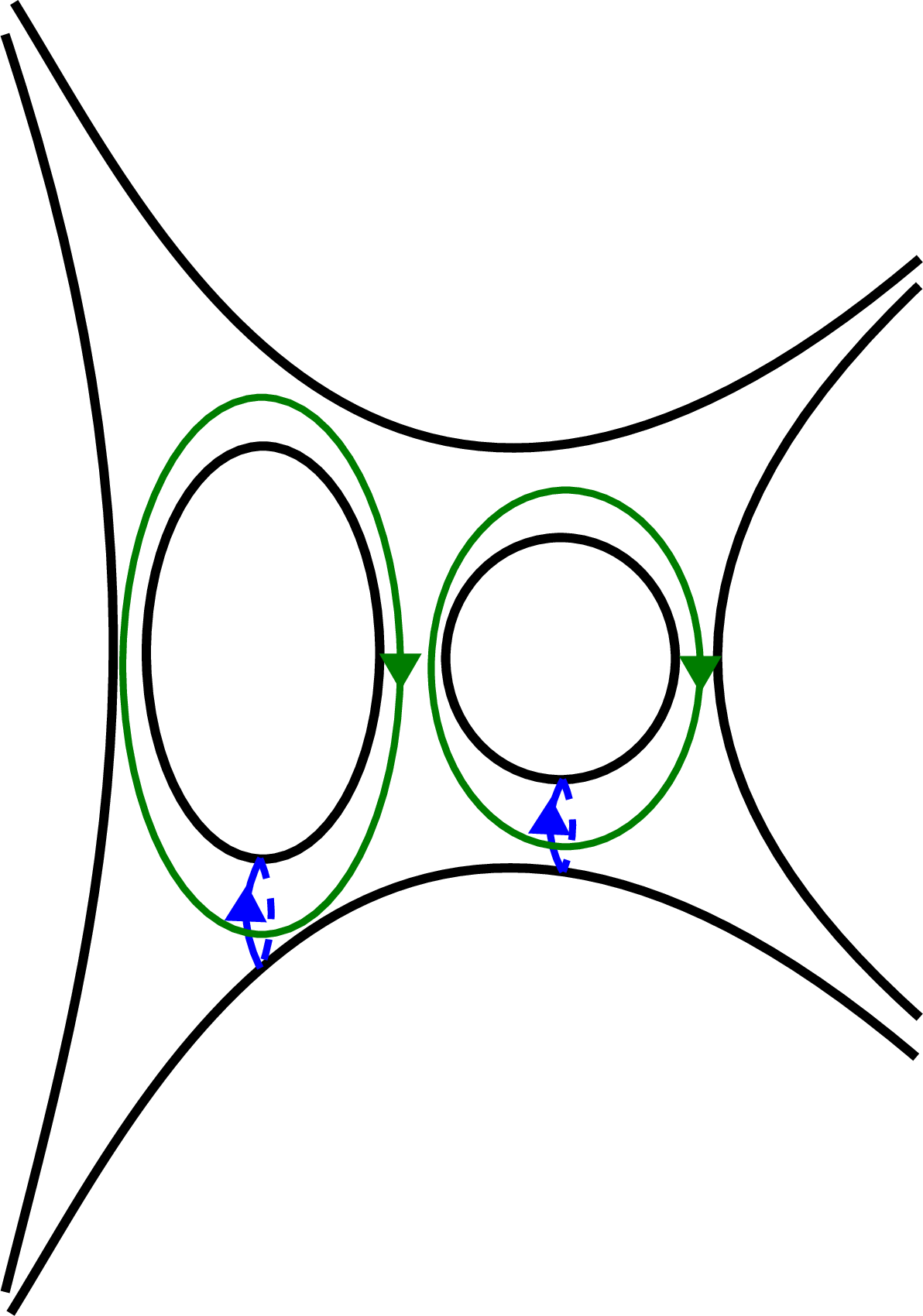}
\end{center}
\caption{The curve $Z_t$ and the cycles $T_{t}^{\sigma_0}, T_t^{\sigma_1}, C_{t}^0, C_t^{e_1}$}
\label{fg:curve}
\end{figure}
The right (resp. left) green cycle is $C_{t}^0$ (resp. $C_t^{e_1}$), and the right (resp. left) blue cycle is $T_{t}^{\sigma_0}$ (resp. $T_t^{\sigma_1}$).
According to \pref{th:main1}, one has
\begin{align}
\int_{C_t^{0}}  \Omega_t^{1, 0} 
&=
8 \cdot (-\log t) +O \lb t^\epsilon \rb, \quad
\int_{C_t^{e_1}}  \Omega_t^{1, e_1} 
=
8 \cdot (-\log t) - 4 \pi \sqrt{-1} + O \lb t^\epsilon \rb
\\
\int_{C_t^{0}}  \Omega_t^{1, e_1} 
&=
\int_{C_t^{e_1}}  \Omega_t^{1, 0} 
=
-2 \cdot (-\log t)
+O \lb t^\epsilon \rb\\
\int_{T_t^{\sigma_0}} \Omega_t^{1, 0} 
&=
\int_{T_t^{\sigma_1}} \Omega_t^{1, e_1} 
=
-2 \pi \sqrt{-1} + O \lb t^\epsilon \rb, \quad
\int_{T_t^{\sigma_0}} \Omega_t^{1, e_1} 
=
\int_{T_t^{\sigma_1}} \Omega_t^{1, 0} 
=
O \lb t^\epsilon \rb
\end{align}
as $t \to +0$, for some constant $\epsilon >0$.

Let $\alpha_0, \alpha_{e_1}, \beta_0, \beta_{e_1} \in H_1 \lb Z_t, \bZ \rb$ be the cycle classes represented by $T_t^{\sigma_0}, T_t^{\sigma_1}, C_t^0$, and $C_t^{e_1}$ respectively.
These classes form a symplectic basis of $H_1 \lb Z_t, \bZ \rb$.
The monodromy of the locally constant sheaf $H_\bZ$ of the polarized logarithmic Hodge structure $\lb H_\bZ, Q, \scrF \rb$ of \pref{cr:log} is given by $\id + N$ with
\begin{align}
N \lb \beta_0^\ast \rb=N \lb \beta_{e_1}^\ast \rb=0, \quad
N \lb \alpha_0^\ast \rb=8 \beta_0^\ast-2 \beta_{e_1}^\ast, \quad
N \lb \alpha_{e_1}^\ast \rb=-2 \beta_{0}^\ast+8 \beta_{e_1}^\ast.
\end{align}
The filtration $\scrF$ is given by \eqref{eq:filtration0} and \eqref{eq:filtration} with
\begin{align}
P(0, 0)=P(e_1, 0)=P(0, e_1)=0, \quad P(e_1, e_1)=-4 \pi \sqrt{-1}.
\end{align}
\end{example}

\par
{\it Acknowledgment: } 
I thank Hiroshi Iritani for his kind answers to my questions on \cite{MR3112512, MR4194298} and many valuable comments on the draft of this article.
I am also grateful to the anonymous referees for their comments which helped me to improve this article. 
This work was supported by the Institute for Basic Science (IBS-R003-D1). \\

\bibliographystyle{amsalpha}
\bibliography{bibs}

\end{document}